\def \Cay {\mathrm{Cay}}
\def \cA {{\cal A}}
\def \cB {{\cal B}}
\def \cC {{\cal C}}
\def \cF {{\cal F}}
\def \Z {\mathbb Z}
\newcommand{\ol}{\overline}
\def\Z{\mathbb{Z}}
\def\Kmn{K_{m}[n]}
\newcommand{\comment}[1]{}
\def \cC {{\cal C}}
\def \sC {{\mathscr C}}
\newtheorem{defini}{Definition}[section]
\newtheorem{prop}[defini]{Proposition}
\newtheorem{lemma}[defini]{Lemma}
\newtheorem{cor}[defini]{Corollary}
\newtheorem{thm}[defini]{Theorem}
\theoremstyle{definition}
\newtheorem{rem}[defini]{Remark}
\newtheorem{ex}[defini]{Example}
\newtheorem{nota}[defini]{Notation}
\begin{document}
\title{Cyclic cycle systems of the complete multipartite graph}
\author{Andrea Burgess \thanks{andrea.burgess@unb.ca}\\
Department of Mathematics and Statistics,\\
University of New Brunswick \\Saint John, Canada \and
Francesca Merola  \thanks{merola@mat.uniroma3.it} \\
Dipartimento di Matematica e Fisica,\\ Universit\`a Roma Tre,\\Rome, Italy \and
Tommaso Traetta  \thanks{tommaso.traetta@unibs.it }\\
DICATAM, \\ Universit\`a di Brescia,\\ Brescia, Italy}

\maketitle

\begin{abstract}
In this paper, we study the existence problem for cyclic $\ell$-cycle decompositions of the graph $\Kmn$, the complete multipartite graph with $m$ parts of size $n$, and give necessary and sufficient conditions for their existence in the case that $2\ell \mid (m-1)n$.
\end{abstract}

\section{Introduction}

In this paper, we consider the problem of decomposing the complete multipartite graph into cycles.  We use the notation $\Kmn$ to denote the complete multipartite graph with $m$ parts of size $n$.  Note that if $n=1$, then $K_m[1]$ is isomorphic to the complete graph $K_m$ on $m$ vertices, while $K_m[2]$ is isomorphic to $K_{2m}-I$, the complete graph on $2m$ vertices with the edges of a 1-factor $I$ removed.  We denote by $\mathscr{C}_{\ell}$ a cycle of length $\ell$ (briefly, an $\ell$-cycle), and by $(c_0, c_1, \ldots, c_{\ell-1})$ the $\ell$-cycle whose edges are $\{c_0, c_1\}$, $\{c_1, c_2\}$, $\ldots$, $\{c_{\ell-1},c_0\}$.  

We say that a graph $\Gamma$ is {\em decomposed} into subgraphs $\Gamma_1, \Gamma_2, \ldots, \Gamma_t$, if the edge sets of the $\Gamma_i$ partition the edges of $\Gamma$. If $\Gamma_1 \cong \Gamma_2 \cong \cdots \cong \Gamma_t \cong H$, then we speak of an $H$-decomposition of $\Gamma$.  A $\mathscr{C}_{\ell}$-decomposition of a graph $\Gamma$ is also referred to as an {\em $\ell$-cycle system of $\Gamma$}. The problem of decomposing $K_m$ if $m$ is odd, or $K_m-I$ if $m$ is even, into cycles of fixed length $\ell$ has a long history (see~\cite[Chapter 8]{BlueBook} and \cite[Chapter VI.12]{handbook}) until its solution in~\cite{AlspachGavlas, Sajna} (see also \cite{bur03}).  
\begin{thm}[\cite{AlspachGavlas, Sajna}]
There is a $\mathscr{C}_{\ell}$-decomposition of $K_m$,  $m$ odd, if and only if $3 \leq \ell \leq m$ and $\ell \mid \binom{m}{2}$.  There is a $\mathscr{C}_{\ell}$-decomposition of $K_m-I$, $m$ even,  if and only if $3 \leq \ell \leq m$ and $\ell \mid \frac{m(m-2)}{2}$. 
\end{thm}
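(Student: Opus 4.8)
The two stated conditions are necessary for elementary reasons, so I would dispose of that direction first. Each $\mathscr{C}_\ell$ uses $\ell$ distinct vertices and has at least three of them, forcing $3\le \ell\le m$; since the cycles partition the edge set, $\ell$ must divide the number of edges, which is $\binom{m}{2}$ for $K_m$ and $\frac{m(m-2)}{2}$ for $K_m-I$. The parity conditions are automatic: in either graph every vertex has even degree ($m-1$ and $m-2$ respectively), matching the fact that each vertex meets every cycle in an even number of edges.

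All the difficulty lies in sufficiency, and here I would split according to the parity of $\ell$, since the available building blocks differ sharply. For even lengths the fundamental tool is Sotteau's theorem, which decomposes $K_{a,b}$ into $\mathscr{C}_{2k}$ exactly when $a,b$ are even, $a,b\ge k$, and $2k\mid ab$. For odd lengths no such bipartite decomposition exists, since a bipartite graph contains no odd cycle; odd $\ell$ therefore demands genuinely different and more explicit constructions, and this is the source of the split between the two cited papers.

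The engine in both cases is a recursive reduction on $m$. Writing the vertex set as $V\cup W$ with $|V|=a$ and $|W|=b$ splits the edges as $E(K_a)\cup E(K_b)\cup E(K_{a,b})$, so decomposing each piece into $\ell$-cycles decomposes the whole. The obstruction is that when $a$ and $b$ are both odd the bipartite part $K_{a,b}$ has odd degrees and admits no cycle decomposition on its own. I would therefore not peel the bipartite part off cleanly, but absorb it together with one complete piece, decomposing $E(K_b)\cup E(K_{a,b})$ jointly (up to a $1$-factor adjustment) into $\ell$-cycles while holding $a$ in a controlled range. A short computation shows that removing a block of $2\ell$ vertices preserves both the parity of $m$ and the divisibility $\ell\mid\binom{m}{2}$, so iterating this step reduces the problem to a bounded window $\ell\le m< c\ell$ for a small constant $c$, with the extreme case $\ell=m$ being the classical Hamiltonian decomposition of Walecki.

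The heart of the argument, and the step I expect to be by far the hardest, is then to establish the finitely many base decompositions inside this bounded window and to verify that the joint complete-plus-bipartite decompositions used in the recursion exist for every admissible residue of $m$. This is exactly where the long case analyses of the cited papers live: one must exhibit explicit families of $\ell$-cycles, typically via a difference construction in $\Z_m$ together with a handful of short starter cycles, that simultaneously cover the complete and bipartite parts while respecting all the divisibility constraints. Once these base cases and the recursive step are in place, induction on $m$ completes the proof.
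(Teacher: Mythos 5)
The paper does not prove this theorem at all: it is quoted verbatim from \cite{AlspachGavlas, Sajna} as background, so there is no internal proof to compare your argument against. Judged on its own, your write-up is an accurate road map of how those papers proceed --- necessity by counting edges and degrees, Sotteau's theorem for even cycle lengths, a reduction of $m$ by $2\ell$ (and your check that this preserves $\ell \mid \binom{m}{2}$ is correct), Walecki's construction for the Hamiltonian case, and a finite window of base cases --- but it is not a proof. Everything of substance is deferred in the sentence beginning ``The heart of the argument\dots'': the existence of the joint decompositions of $E(K_b)\cup E(K_{a,b})$ (up to a $1$-factor) into $\ell$-cycles for every admissible residue, and the explicit base decompositions for $\ell \le m < c\ell$, are asserted rather than constructed. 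Those constructions are the theorem; without them the induction has nothing to stand on, so as a standalone argument this has a genuine gap rather than being a different route to the result.

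One smaller caution: the claim that the reduction step ``preserves the parity of $m$'' and lands you in a clean window glosses over the fact that for $K_m - I$ the removed $1$-factor must be threaded consistently through the recursive pieces, and that for odd $\ell$ the absorbed bipartite part $K_{a,b}$ with $a,b$ odd cannot be handled by any off-the-shelf bipartite result (as you yourself note, bipartite graphs have no odd cycles). This is precisely why \cite{Sajna} is a long and delicate paper; acknowledging the difficulty is not the same as resolving it.
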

A natural next step is to consider $\ell$-cycle decompositions of $\Kmn$.  
Obvious necessary conditions for the existence of such a decomposition are that $\ell$ is at most the number of vertices in $\Kmn$, that the degree $(m-1)n$ is even and that $\ell$ divides the number of edges of $\Kmn$, summarized in the following lemma.
\begin{lemma} \label{necessary}
If there exists a $\mathscr{C}_{\ell}$-decomposition of $\Kmn$, then $3 \leq \ell \leq mn$, $(m-1)n$ is even and $\ell \mid \binom{m}{2}n^2$.
\end{lemma}
These conditions have been shown to be sufficient in several cases.  The results of~\cite{AlspachGavlas, Sajna} show sufficiency when $n \in \{1,2\}$.  Other cases that have been settled include that
$m \leq 5$~\cite{BillingtonCavenaghSmith1, BillingtonCavenaghSmith2, Cavenagh}, $\ell \in \{3,4,5,6,8\}$~\cite{BillingtonHoffmanMaenhaut, CavenaghBillington, CockayneHartnell, Hanani}, and $\ell$ is prime~\cite{ManikandanPaulraja}, twice a prime~\cite{Smith}, or the square of a prime~\cite{Smith-prime, SC3}.  Among the most general results is that the obvious necessary conditions are sufficient if the cycle length $\ell$ is small relative to the number of parts $m$, in particular $\ell\leq m$ if $n$ is odd or $2m$ if $n$ is even~\cite{SC1, SC2}; see also \cite{BS} for some recent work on decompositions into cycles of variable length. Nevertheless, the existence problem for cycle decompositions of the complete multipartite graph remains open in general.    

In this paper, we consider the problem of constructing {\em cyclic} $\ell$-cycle systems of $\Kmn$.  To define this concept, we first recall the definition of a \emph{Cayley graph} on a group $G$ with connection set $\Omega$, denoted by $\Cay[G:\Omega]$.  Let $G$ be an additive group, not necessarily abelian,  and let $\Omega \subseteq G\setminus \{0\}$ such that for every $\omega \in \Omega$ we also have $-\omega \in \Omega$.  The Cayley graph $\Cay[G:\Omega]$ is the graph whose vertices are the elements of $G$ and in which two vertices are adjacent if and only if their difference is an element of $\Omega$ (an analogous definition can be given in multiplicative notation).

Consider the natural action of $G$ on the cycles of $\Gamma=\Cay[G:\Omega]$: given a cycle $C=(c_0,c_1,\ldots,c_{\ell-1})$ in $\Gamma$ and $g \in G$, we define $C+g$ to be the cycle 
$(c_0+g, c_1+g, \ldots, c_{\ell-1}+g)$.  
The subgroup of $G$ consisting of all the elements $g$ such that $C+g=C$ is called the \emph{$G$-stabilizer} of $C$. 
The set $Orb_G(C)=\{C+g\mid g\in G\}$ of all distinct translates of $C$ is called the 
$G$-orbit of $C$.  For $\Gamma=\Cay[G:\Omega]$, a cycle system of $\Gamma$ is said to be {\em regular under the action of $G$}, or {\em $G$-regular},
if it is isomorphic to a cycle system $\cF$ of $\Cay[G:G\setminus N]$, for a suitable subgroup $N$ of order $n$, such that  $C+g \in \cF$
for every $C \in \cF$ and $g \in G$.  In particular, when $G$ is the cyclic group $\Z_n$, a $G$-regular cycle system is called {\em cyclic}.

Clearly, $\Kmn$ is isomorphic to $\Cay[G:G\setminus N]$ where $G$ is a group of order $mn$ and $N$ is any of its subgroups of order $n$. Note that the right cosets of $N$ in $G$ determine the $m$ disjoint parts of $\Kmn$. In this paper, our primary focus is cyclic cycle systems of $\Kmn$, in which case we take $G=\mathbb{Z}_{mn}$ and $N=m\mathbb{Z}_{mn} = \{mx \mid x \in \Z_{mn}\}$.

Cyclic $\ell$-cycle decompositions of $K_m$ (i.e. the case $n=1$) have been extensively studied, and the existence problem has been solved when $m \equiv 1,\ell \pmod{2\ell}$~\cite{BGL, BDF2003, Kotzig, Rosa1, Vietri}, $\ell=m$~\cite{BDF2004}, $\ell \leq 32$~\cite{WuFu}, 
$\ell$ is twice or three times a prime power~\cite{WuFu, Wu2}, or $\ell$ is even and $m>2\ell$~\cite{Wu}.  For $n=2$, the existence problem 
for $\ell$-cycle systems of $K_m[2] \cong K_{2m}-I$ is solved when $m \equiv 1$ (mod $\ell$)~\cite{BGL} or $\ell \mid 2m$~\cite{JM17}.  
Less is known for cyclic $\ell$-cycle systems of $\Kmn$ with $n \geq 3$.  The case $\ell=3$ is solved in~\cite{WangChang}.  
More generally, cyclic $\ell$-cycle decompositions of $\Kmn$ have been studied for $\ell$ odd and $n=\ell$~\cite{BDF2003} and for Hamiltonian cycle systems of $\Kmn$ with $mn$ even~\cite{JM,MPP, PP}.

In this paper, we focus on the existence of cyclic $\ell$-cycle systems of $\Kmn$ when $2\ell \mid (m-1)n$.  This is a natural case to consider, as it means that we may construct cyclic cycle systems in which all cycle orbits are full, i.e.\ the orbit of any cycle has cardinality $mn$.   
Note that when $\ell \geq 3$ and $2\ell \mid (m-1)n$, the conditions of Lemma~\ref{necessary} hold, so that an $\ell$-cycle system of $\Kmn$ may exist.
A  complete solution for cyclic decompositions is known when
$n \in \{1,2\}$, or when $n=\ell$ and both $\ell$ and $m$  are odd.

\begin{thm}[\cite{BDF2003}] \label{complete}
For any integers $\ell \geq 3$  and $m$ such that $2\ell \mid (m-1)$, there is a cyclic $\ell$-cycle system of $K_m$.
\end{thm}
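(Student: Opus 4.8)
Since $2\ell \mid (m-1)$, write $m = 2\ell t + 1$ with $t = \tfrac{m-1}{2\ell} \ge 1$; in particular $m$ is odd. I would take the vertex set to be $\Z_m$, on which $\Z_m$ acts by translation. The plan is to build the system from a \emph{difference family of base cycles}: $t$ cycles $B_0, \dots, B_{t-1}$ of length $\ell$, with vertices in $\Z_m$, whose edges realize each of the differences $1, 2, \dots, \ell t = \tfrac{m-1}{2}$ exactly once (reading an edge $\{x,y\}$ as the difference $\pm(x-y)$). Because $m$ is odd, each such difference has full orbit of size $m$ under translation, so the translates $\{B_r + g : 0 \le r < t,\ g \in \Z_m\}$ are $tm = \binom{m}{2}/\ell$ distinct $\ell$-cycles that partition the edges of $K_m$ and are permuted regularly by $\Z_m$; this is exactly a cyclic $\ell$-cycle system. (In Rosa's language, such a family is a $\rho$-labeling of $t$ disjoint copies of $\mathscr{C}_\ell$.) It therefore suffices to construct one such difference family.

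I would encode each base cycle by its sequence of signed edge-differences. A choice of distinct values $d_1, \dots, d_\ell$ and signs $\eps_1, \dots, \eps_\ell \in \{\pm 1\}$ determines the closed walk $0, s_1, \dots, s_{\ell-1}, 0$ with partial sums $s_j = \sum_{i \le j}\eps_i d_i$, and this is a genuine $\ell$-cycle precisely when the $s_j$ are pairwise distinct modulo $m$ and $\sum_i \eps_i d_i \equiv 0 \pmod m$. I would partition $\{1, \dots, \ell t\}$ into the $t$ blocks $B^{(r)} = \{r\ell+1, \dots, r\ell+\ell\}$ and assign block $B^{(r)}$ to cycle $B_r$. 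For the alternating (zig-zag) signing $+, -, +, -, \dots$ the partial sums stay within an interval of length less than $m$ and are distinct as integers, hence automatically distinct modulo $m$; so with this signing every base cycle is automatically simple, and the only property that can fail is closure.

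The closing condition $\sum_i \eps_i d_i \equiv 0 \pmod m$ is where the difficulty concentrates. For the lowest block $\{1, \dots, \ell\}$ a signing with integer sum $0$ exists exactly when $\tfrac{\ell(\ell+1)}{2}$ is even, i.e.\ when $\ell \equiv 0, 3 \pmod 4$; for $\ell \equiv 1, 2 \pmod 4$ the sum is odd, and one must instead hit $\pm m$, which is available because $m$ is odd and $\tfrac{\ell(\ell+1)}{2} \ge m$ once $\ell$ is not too small. This is precisely the parity split that governs the classical base case $t=1$, the cyclic $\ell$-cycle system of $K_{2\ell+1}$. For a higher block $B^{(r)}$ the differences are shifted up by $r\ell$, and this shift supplies additional freedom modulo $m$ for meeting the closing condition. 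The main obstacle, as I see it, is the simultaneous bookkeeping: selecting signs for all $t$ blocks at once---and, where closing forces the clean zig-zag to be perturbed, locally reordering or swapping a few differences between adjacent blocks---so that \emph{every} base cycle both closes and remains simple. I would organize this as a case analysis on $\ell \bmod 4$, using the $t=1$ construction as the template and checking that the perturbations needed to close each block disturb only a bounded, controllable part of the partial-sum sequence.
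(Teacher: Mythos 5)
First, note that the paper does not actually prove this statement: Theorem \ref{complete} is quoted from \cite{BDF2003}, restated as Theorem \ref{complete DF} in the language of difference families, and used as an ingredient elsewhere. There is therefore no internal proof to compare against, and what follows assesses your argument on its own terms.

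Your reduction is correct and is exactly the standard route: constructing $t=(m-1)/(2\ell)$ base cycles whose lists of differences partition $\Z_m\setminus\{0\}$ is precisely the $n=1$ case of an $(m,1,\mathscr{C}_\ell)$-DF, and Proposition \ref{DFandCS} (or your direct orbit count, which is the same argument) converts such a family into a cyclic system. The genuine gap is that the difference family itself is never constructed. Everything after ``I would encode each base cycle\dots'' is a plan rather than a proof: you correctly identify that the pure alternating signing yields simple cycles but does not close, and that forcing closure requires perturbations which in turn threaten simplicity --- and then you defer exactly this step (``selecting signs for all $t$ blocks at once\dots locally reordering or swapping a few differences between adjacent blocks\dots checking that the perturbations\dots disturb only a bounded, controllable part''). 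That deferred step is the entire content of the theorem. Concretely: (i) for a block $\{r\ell+1,\dots,r\ell+\ell\}$ the achievable signed sums form an arithmetic progression of step $2$ with fixed parity, so one must verify for every $r$ and every $\ell$ (including the small cases, where the range is tight) that a multiple of $m$ of the correct parity is reachable; (ii) once a non-alternating signing is chosen to hit that target, the partial sums no longer interleave into two monotone runs, so their distinctness modulo $m$ must be re-established for the specific signing used --- a signing realizing a prescribed subset-sum need not keep the partial sums inside an interval of length less than $m$. The published proof in \cite{BDF2003} (and, for even $\ell$, the mechanism formalized in this paper as Lemma \ref{lemma1} via balanced sets) resolves exactly these two points with explicit constructions and a case analysis on $\ell \bmod 4$; your proposal correctly predicts the shape of that analysis but does not carry it out, so as written it is an outline, not a proof.
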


\begin{thm}[\cite{BGL}] \label{part size 2}
If $\ell \mid (m-1)$, then there is a cyclic $\ell$-cycle system of $K_{m}[2]$ if and only if $m \equiv 0$ or $1 \pmod{4}$.
\end{thm}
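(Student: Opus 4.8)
The plan is to attack this via the \emph{difference method}, realizing $K_m[2]\cong\Cay[\Z_{2m}:\Z_{2m}\setminus\{0,m\}]$ with $N=\{0,m\}$ the subgroup of order $2$, so that the cosets $\{x,x+m\}$ are the $m$ parts. Under the identification $d\sim 2m-d$, the edge set partitions into \emph{difference classes} indexed by $\{1,2,\dots,m-1\}$, each of size $2m$ (the class $d=m$ is exactly the removed $1$-factor). Because $\ell\mid(m-1)$, we are in the regime $2\ell\mid(m-1)n$ with $n=2$, so a cyclic system must consist of full orbits, each of size $2m$. A short count then shows one needs precisely $t=(m-1)/\ell$ base cycles, and the whole problem reduces to the following: find $\ell$-cycles $C_1,\dots,C_t$ in $\Z_{2m}$ whose lists of $\ell$ edge-differences are pairwise disjoint and together partition $\{1,\dots,m-1\}$. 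I would first record the standard fact that such a base cycle has trivial $\Z_{2m}$-stabilizer (hence full orbit) whenever its $\ell$ differences are distinct, since a nontrivial period would force the difference sequence to repeat; thus ``distinct differences across the $C_i$'' automatically yields both full orbits and a genuine partition of the difference classes.

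For necessity I would argue by a signed difference-sum parity. Writing each base cycle as $(c_0,\dots,c_{\ell-1})$, its directed differences $\delta_i=c_{i+1}-c_i$ satisfy $\sum_i\delta_i\equiv 0\pmod{2m}$ by telescoping, and if $d_i\in\{1,\dots,m-1\}$ denotes the undirected value of $\delta_i$ then $\delta_i\equiv d_i\pmod 2$ since $2m-d_i\equiv d_i\pmod 2$. Reducing modulo $2$ and summing over all $t$ base cycles, whose differences exhaust $\{1,\dots,m-1\}$, gives
\[
0\equiv\sum_{d=1}^{m-1}d=\binom{m}{2}\pmod 2.
\]
Hence $\binom{m}{2}$ must be even, equivalently $m(m-1)\equiv 0\pmod 4$, which is exactly $m\equiv 0$ or $1\pmod 4$. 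This delivers the necessary condition cleanly.

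For sufficiency the goal is the reverse construction when $m\equiv 0,1\pmod 4$. The strategy is: (a) partition $\{1,\dots,m-1\}$ into $t$ blocks of size $\ell$, and (b) realize each block $D_j$ as an $\ell$-cycle, i.e.\ choose signs $\eps_d\in\{\pm 1\}$ with $\sum_{d\in D_j}\eps_d d\equiv 0\pmod{2m}$ together with an ordering whose partial sums are distinct. For step (a) I would exploit a parity bookkeeping: when $m\equiv 0,1\pmod 4$ the number of odd elements of $\{1,\dots,m-1\}$ is even, so the blocks can be chosen to each contain an even number of odd differences, making every block-sum even and hence opening the door to a \emph{signed sum equal to $0$}. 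For step (b), with a block of even sum I would realize the cycle by an explicit alternating (``zig-zag'') ordering of the differences: arranging them so that the running partial sums oscillate within a window of width less than $2m$ keeps all vertices distinct and forces the walk to close, turning a signed-sum-zero block into a bona fide $\ell$-cycle.

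The hard part is step (b): guaranteeing that the chosen orientations yield an actual cycle with \emph{distinct} partial sums, rather than a closed walk that revisits a vertex, and doing so uniformly across the parity of $\ell$ and the two residue classes $m\equiv 0,1\pmod 4$. I expect this to require a genuinely explicit, case-based construction of the base cycles, with one or two ``leftover'' blocks (for instance the block carrying the largest differences, or the single odd difference left when $\ell$ is odd) needing separate ad hoc treatment, possibly with signed sum $\pm 2m$ rather than $0$. Once each block is realized, assembling the $\Z_{2m}$-orbits of $C_1,\dots,C_t$ produces a cyclic $\ell$-cycle system of $K_m[2]$, completing the equivalence.
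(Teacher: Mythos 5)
The paper offers no proof of this statement---it is quoted verbatim from \cite{BGL}---so your attempt can only be judged on its own merits. Your overall framework (realizing $K_m[2]$ as $\Cay[\Z_{2m}:\Z_{2m}\setminus\{0,m\}]$ and reducing to a partition of $\{1,\dots,m-1\}$ into $\ell$-blocks realizable as base cycles) is the right one, and your telescoping-parity computation for full-orbit cycles is correct. But there are two genuine gaps. The first is the claim that $2\ell\mid(m-1)n$ forces every cycle orbit to be full: that divisibility only makes an all-full-orbit system numerically \emph{possible}, it does not exclude short orbits. A base cycle $C$ with $C+m=C$ (stabilizer $\{0,m\}$, orbit of length $m$) can occur whenever $2\mid\ell$, and for $m\equiv 3\pmod 4$ the hypothesis $\ell\mid m-1$ permits $\ell\equiv 2\pmod 4$; for such a cycle the telescoping identity only gives $\sum_{i<\ell/2}\delta_i\equiv m\pmod{2m}$ over one period, whose parity is that of $m$, not $0$, so your necessity argument does not cover it. The hole is repairable---the edge count gives $\ell(2t_1+t_2)=2(m-1)$ with $t_2$ the number of half-orbit base cycles, so $t_2$ is even and the global parity still forces $2\mid\binom{m}{2}$---but as written the ``only if'' direction is incomplete precisely in the case $m\equiv 3\pmod 4$, $\ell\equiv 2\pmod 4$.

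The second and larger gap is sufficiency, which you leave as a plan rather than a proof. Arranging for each block to contain an even number of odd differences is necessary but far from sufficient: to close the walk you need a signed sum $\equiv 0\pmod{2m}$, and an even block sum does not guarantee this (for $m\equiv 1\pmod 4$, $m>5$, the even-sum block $\{1,2,m-2\}$ admits no admissible signing), and on top of that you need all partial sums distinct modulo $2m$. Engineering blocks with enough structure that a zig-zag ordering provably works---typically pairs of consecutive integers plus one controlled exceptional pair, with separate treatment of leftover blocks---is exactly the content of the Skolem-type difference sets of \cite{BGL}, and is the analogue of what Lemma~\ref{lemma1} (balanced sets) and Lemma~\ref{lemma:odd1} (paths glued to $3$-paths) do elsewhere in this paper. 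You correctly identify this as the hard part, but since it is not carried out, the ``if'' direction is not established.
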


\begin{thm}[\cite{BDF2003}] \label{part size l}
Let $m, \ell \geq 3$ be odd with $(m,\ell) \neq (3,3)$.  Then there is a 
cyclic $\ell$-cycle system of $K_{m}[\ell]$.
\end{thm}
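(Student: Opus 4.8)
Here is how I would approach the construction.

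The plan is to translate the statement into a difference problem and then build base cycles explicitly. Realize $K_m[\ell]$ as $\Cay[\Z_{m\ell}:\Z_{m\ell}\setminus N]$ with $N=m\Z_{m\ell}$, so that the $m$ parts are the residue classes modulo $m$ and the admissible edge-differences are exactly the elements of $\Z_{m\ell}$ that are not multiples of $m$. Since $m$ and $\ell$ are odd, $m\ell$ is odd, and the undirected admissible differences are represented by
\[
D^+=\{d:\ 1\le d\le \tfrac{m\ell-1}{2},\ m\nmid d\},\qquad |D^+|=\tfrac{\ell(m-1)}{2}.
\]
I would then invoke the difference method: if $\cF$ is a collection of $\ell$-cycles in this Cayley graph whose edge-differences together realize each element of $D^+$ exactly once, and each member of $\cF$ has trivial $\Z_{m\ell}$-stabilizer, then the union of the $\Z_{m\ell}$-orbits of the cycles in $\cF$ is a cyclic $\ell$-cycle system of $K_m[\ell]$. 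Here $|\cF|=(m-1)/2$, since $K_m[\ell]$ has $\binom{m}{2}\ell^2$ edges, each full orbit consists of $m\ell$ cycles, and each cycle uses $\ell$ edges. Thus it suffices to produce these $(m-1)/2$ base cycles.

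The heart of the argument is to partition $D^+$ into $(m-1)/2$ blocks of size $\ell$, each of which is \emph{realizable} as a simple cycle: a set $S=\{s_1,\dots,s_\ell\}\subseteq D^+$ is realizable precisely when one can choose signs $\sigma_i\in\{\pm1\}$ with $\sum_i \sigma_i s_i\equiv 0\pmod{m\ell}$ and order the terms so that the partial sums $0,\ \sigma_1 s_1,\ \sigma_1 s_1+\sigma_2 s_2,\dots$ are pairwise distinct; the resulting vertices then form a base $\ell$-cycle whose edge-differences are $S$. I would construct such a partition explicitly. The delicate point is that the obvious partitions generally fail the closure condition $\sum_i\sigma_i s_i\equiv 0$: grouping the differences by their residue $\pm t\pmod m$, or by the blocks $\{qm+1,\dots,qm+m-1\}$, typically produces blocks that cannot be signed to zero. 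For example a parity/subset-sum analysis shows that $\{1,4,6,9,11\}$ has no zero-sum signing modulo $25$. I would therefore interleave residue classes across the blocks, orient each base cycle with a near-alternating sign pattern (about half of the differences negated) to force simultaneously the zero total and the distinctness of the partial sums, and organize the argument into cases according to the residues of $m$ and $\ell$ modulo $4$, disposing of the smallest values by ad hoc base cycles.

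Finally I would check the two side conditions. Distinctness of partial sums guarantees that each base cycle is a genuine simple $\ell$-cycle, and because every difference is a non-multiple of $m$ consecutive vertices always lie in distinct parts. For full orbits it is enough that no base cycle is fixed by a nonzero translation; a subgroup $H$ stabilizing a cyclic $\ell$-cycle must satisfy $|H|\mid \ell$, and in the cyclic group $\Z_{m\ell}$ every such $H$ is contained in the unique order-$\ell$ subgroup $N$, so one only needs the chosen vertex sets not to be unions of cosets of any nontrivial subgroup of $N$. The main obstacle is exactly this simultaneous realizability: exhibiting one partition of $D^+$ all of whose blocks close up to zero and avoid repeated partial sums, uniformly over all odd $m,\ell$. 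The same obstruction accounts for the single exception: for $(m,\ell)=(3,3)$ one would need a triangle with difference set $D^+=\{1,2,4\}$ in $\Z_9$, and no choice of signs gives $\pm1\pm2\pm4\equiv 0\pmod 9$, so the system cannot exist.
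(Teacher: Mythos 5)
Your reduction is set up correctly, and it is in fact the route the literature takes: the paper does not prove this statement itself but imports it from \cite{BDF2003}, restating it as Theorem~\ref{part size l DF} in exactly your terms, namely the existence of an $(m\ell,\ell,\sC_\ell)$-difference family consisting of $(m-1)/2$ base $\ell$-cycles whose lists of differences partition $\Z_{m\ell}\setminus m\Z_{m\ell}$. Your count $|D^+|=\ell(m-1)/2$, your criterion for a block of $\ell$ differences to be realizable as a base cycle (a zero-sum signing with pairwise distinct partial sums), and your verification that $\{1,2,4\}$ admits no zero-sum signing modulo $9$ are all correct; the full-orbit condition you worry about at the end is automatic once the differences are covered exactly once, as noted after Proposition~\ref{DFandCS}.

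The genuine gap is that the entire content of the theorem is the existence, for every odd pair $(m,\ell)\neq(3,3)$, of a partition of $D^+$ into $(m-1)/2$ simultaneously realizable $\ell$-blocks, and you never produce one. The sentence announcing that you would ``interleave residue classes across the blocks, orient each base cycle with a near-alternating sign pattern\ldots and organize the argument into cases according to the residues of $m$ and $\ell$ modulo $4$'' is a statement of intent, not a construction, and your own example $\{1,4,6,9,11\}$ in $\Z_{25}$ shows that realizability is a delicate subset-sum condition that a generic ``interleaved, near-alternating'' choice will not automatically satisfy; one must exhibit the blocks, the signs, and the ordering, and verify closure and distinctness of partial sums uniformly in $m$ and $\ell$. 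That is precisely where all the work lies in \cite{BDF2003}, and where the analogous arguments of this paper spend their effort (compare the explicit partitions and path-building of Lemmas~\ref{lemma:odd1}, \ref{lem:AA*} and \ref{lemma:5subsets:0}, or the base cycles written out in Proposition~\ref{part size 2l}). As it stands, the proposal reduces the theorem to an equivalent combinatorial assertion and then asserts that assertion without proof. A secondary, minor imprecision: your closing claim that the $(3,3)$ system ``cannot exist'' needs one extra line ruling out short orbits (a $\Z_9$-stabilized triangle would have all differences in $3\Z_9$, i.e.\ all vertices in one part), though the theorem as stated only excludes that case rather than asserting nonexistence.
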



We will extend these results to the case $n \geq 3$, giving necessary and sufficient conditions for the existence of a cyclic $\ell$-cycle system of $\Kmn$ when $2\ell \mid (m-1)n$.  As in the results above, the main tools are difference methods.
Our main result is the following theorem.

\begin{thm}\label{MainTheorem}
Let $m,\ell \geq 3$ and $n \geq 1$ be integers such that $2\ell \mid (m-1)n$.  There exists a cyclic $\ell$-cycle system of $\Kmn$ if and only if the following conditions hold:
\begin{enumerate}
\item\label{MT:1} If $n \equiv 2$ (mod 4) and $\ell$ is odd, then $m \equiv 0$ or $1$ (mod $4$).
\item\label{MT:2} If $n \equiv 2$ (mod 4) and $\ell \equiv 2$ (mod $4$), then $m \not\equiv 3$ (mod $4$).
\end{enumerate}
\end{thm}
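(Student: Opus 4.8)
The plan is to work throughout in $G=\Z_{mn}$ with $N=m\Z_{mn}$, so that $\Kmn\cong\Cay[G:\Omega]$ with $\Omega=\Z_{mn}\setminus N$, and to attack the problem by the difference method. For an oriented cycle $C=(c_0,\dots,c_{\ell-1})$ write its list of differences $c_{i+1}-c_i$; the $G$-orbit of $C$ covers, for each such difference $d$, exactly the class $\{d,-d\}$ of edges of $\Cay[G:\Omega]$. Since $2\ell\mid(m-1)n$, the number of edge-classes is $|\Omega|/2=(m-1)n/2=\ell t$ with $t=(m-1)n/(2\ell)$, and the total number of cycles in any $\mathscr{C}_{\ell}$-decomposition is $N_c=\binom{m}{2}n^2/\ell=mn\,t$. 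Thus a cyclic system arises precisely from a family of base cycles whose difference lists partition $\Omega$ into classes, each used once; when every base cycle has trivial $G$-stabilizer all orbits are full of length $mn$ and there are exactly $t$ of them.

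For necessity, first note that under our hypotheses the case putting the involution $mn/2$ into $\Omega$ never occurs: if $mn$ is even then $n$ is even (if $n$ were odd, the evenness of $(m-1)n$ would force $m$ odd, contradicting $mn$ even), and then $mn/2=m(n/2)\in N$. Hence either $mn$ is odd, in which case $m$ and $n$ are both odd and conditions~\ref{MT:1},~\ref{MT:2} are vacuous, or $mn$ is even and no class is self-paired. Assume $mn$ even, so that the parity of a difference is well defined. The key observation is that for an orbit of length $L=mn/k$ a representative cycle is invariant under translation by its stabilizer generator $mn/k$, so after one period of $\ell/k$ steps it returns shifted by $mn/k$; the differences of that period therefore sum to $mn/k=L$ in $\Z_{mn}$ and lie in $\ell/k$ distinct classes. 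Counting parities, the number of odd classes covered by the orbit is congruent to $L$ modulo $2$. Summing over all orbits, and using that the orbit lengths sum to $N_c$, the total number $c$ of odd difference-classes satisfies $c\equiv N_c\pmod 2$. It then remains to evaluate $c$ and $N_c$ modulo $2$ in terms of $m,n$ (exactly the odd multiples of $m$ are removed from the odd elements of $\Z_{mn}$) and to check, case by case on $m,n\pmod 4$ and on $\ell\pmod 4$ subject to $2\ell\mid(m-1)n$, that $c\equiv N_c\pmod 2$ is equivalent to conditions~\ref{MT:1} and~\ref{MT:2}.

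For sufficiency I would conversely construct the required family of base cycles whenever $2\ell\mid(m-1)n$ and conditions~\ref{MT:1},~\ref{MT:2} hold. The first step is to organise $\Omega$ into the $m-1$ ``columns'' $\{r,r+m,\dots,r+(n-1)m\}$ indexed by $r\in\{1,\dots,m-1\}$, which project onto the nonzero differences of $\Z_m$; a cyclic $\ell$-cycle system of $\Kmn$ then amounts to grouping these $(m-1)n$ differences into $t$ blocks of size $\ell$, each orderable with signs into an $\ell$-cycle having distinct partial sums and trivial stabilizer. The plan is to build these blocks from the already known cases: Theorem~\ref{complete} supplies base cycles for the ``$n=1$ layer'' $K_m$ when $2\ell\mid m-1$, while Theorems~\ref{part size 2} and~\ref{part size l} provide templates for the layers arising from residues modulo $2$ and modulo $\ell$. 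Combining these with explicit difference-sequencing lemmas (of Skolem/Langford type) to realise prescribed difference multisets as genuine cycles, one assembles a full partition of $\Omega$. I would carry this out by induction on $n$, treating small residues of $n$ modulo $4$ as base cases and adding, at each step, base cycles that absorb the new columns; the parity-critical family $n\equiv2\pmod 4$ is handled by a dedicated construction in which conditions~\ref{MT:1},~\ref{MT:2} guarantee that the odd differences can be balanced evenly among the base cycles.

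The main obstacle will be sufficiency at the parity boundary $n\equiv2\pmod 4$: there the necessary condition $c\equiv N_c\pmod 2$ is tight, so the base cycles must be chosen to distribute the odd differences exactly as the parity count permits, while simultaneously maintaining injective partial sums and trivial stabilizers across all congruence classes of $m$ and $\ell$. Guaranteeing all three properties uniformly, rather than in finitely many sporadic sub-cases, is the technical heart of the argument; I expect to need a flexible sequencing lemma producing, from any admissible multiset of differences summing to $0$, an $\ell$-cycle with the required full orbit, and to verify its hypotheses separately in the handful of extremal configurations (small $m$, or $\ell$ close to $mn$) where it may fail.
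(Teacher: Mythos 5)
Your framework is the right one and matches the paper's: view $\Kmn$ as $\Cay[\Z_{mn}:\Z_{mn}\setminus m\Z_{mn}]$ and seek base cycles whose difference lists partition the connection set. Your necessity argument is also essentially sound as a sketch: since $mn/2\in m\Z_{mn}$ under the hypotheses, no difference class is self-paired and no stabilizer acts as a reflection, each orbit of length $L$ covers a number of odd classes congruent to $L$ modulo $2$, and summing gives $c\equiv N_c\pmod 2$; working this out modulo $4$ does recover conditions~\ref{MT:1} and~\ref{MT:2}. (The paper does not reprove this; it imports the non-existence conditions from \cite{MPP} via Theorem~\ref{nonexistence} and Corollary~\ref{nonexistence corollary}, but your parity count is essentially that argument. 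Note you still owe the final case analysis, which you explicitly defer.)

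The genuine gap is sufficiency, which is the entire technical content of the theorem and which your proposal does not carry out: phrases such as ``one assembles a full partition of $\Omega$'', ``I would carry this out by induction on $n$'', and ``I expect to need a flexible sequencing lemma'' describe a plan, not a proof, and you yourself flag the parity-critical case as unresolved. Two specific points make the deferred work nontrivial. First, no general sequencing lemma of the kind you postulate exists: an arbitrary admissible multiset of differences summing to $0$ need not be orderable into an $\ell$-cycle with distinct partial sums, which is exactly why the paper must choose its difference blocks with special structure (balanced sets with prescribed alternating difference patterns in Lemma~\ref{lemma1} for even $\ell$; sets partitionable into pairs of consecutive integers, fed into the path Lemma~\ref{lemma:odd1}, for odd $\ell$) and then verify injectivity of partial sums by explicit monotonicity estimates. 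Second, the reduction to known cases is not an induction on $n$: the paper proves a blow-up theorem (Theorem~\ref{cc}) that lifts an $(mw,w,\sC_\ell)$-DF to $(mws,ws,\sC_{\ell})$-DFs and to cyclic $\ell u$-cycle systems, reduces via $\lambda_m=\gcd(\ell,m-1)$ to either $\ell\mid m-1$ or $\ell\mid n$ at a minimal part size $n_0\in\{1,2,4,8,\ell,2\ell,4\ell\}$, and then supplies a separate explicit partition of the difference set in each of these base configurations (Sections~\ref{even}--\ref{odd2}). Until those base difference families are actually exhibited, the ``if'' direction of Theorem~\ref{MainTheorem} remains unproved.
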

The paper is organized as follows. Section \ref{basics} contains basic observations, definitions and methods: we first explain the necessity of Conditions \ref{MT:1} and \ref{MT:2} of Theorem \ref{MainTheorem} in subsection \ref{necessary}; we then discuss difference families in \ref{diff} and present a recursive construction in \ref{Section:cc} which will be very useful in what follows. In the rest of the paper, we prove the sufficiency of conditions \ref{MT:1} and \ref{MT:2} by explicitly constructing a cycle system in all possible cases: we deal with cycles of even length $\ell$ in Section \ref{even}, while the case $\ell$ odd, which is more complex, is discussed in Sections \ref{odd}--\ref{odd2}. In Section \ref{odd} we outline the proof of the odd case and present some preliminary lemmas, and then treat separately the case $\ell\mid m-1$ in Section \ref{odd1} and $\ell \mid n$ in Section \ref{odd2}. In Section \ref{conclusion} we make some final remarks on what happens if we study regular, rather than cyclic, systems.


\section{Basics}\label{basics}

\subsection{Necessary conditions for cyclic cycle systems}\label{necessary}
If there is a cyclic $\ell$-cycle system of $\Kmn$, then the conditions of Lemma~\ref{necessary} hold.  However, these conditions are not sufficient for the existence of a cyclic cycle system.  In this section, we state further necessary conditions, which reduce to those of  Theorem~\ref{MainTheorem} when $2\ell \mid (m-1)n$, and consider necessary conditions for the existence of regular cycle systems of $\Kmn$ more generally.
We start by recalling a result, proven in \cite{MPP} (see also \cite{BR}), which gives us necessary conditions for the existence of a cyclic $\ell$-cycle systems of $\Kmn$.  Here, given a positive integer $x$, we denote by $|x|_2$ the largest $e$ for which $2^e$ divides $x$.

\begin{thm}[\cite{MPP}]\label{nonexistence}
Let $n$ be an even integer. A cyclic $\ell$-cycle system of $\Kmn$ cannot exist
in each of the following cases:
\begin{itemize}
\item[(a)] $m\equiv 0\pmod 4$ and $|\ell|_2=|m|_2+2|n|_2-1$;
\item[(b)] $m\equiv 1\pmod 4$ and $|\ell|_2=|m-1|_2+2|n|_2-1$;
\item[(c)] $m\equiv 2,3\pmod 4$, $n\equiv 2\pmod 4$ and $\ell \not\equiv 0 \pmod 4$;
\item[(d)] $m\equiv 2,3\pmod 4$, $n\equiv 0\pmod 4$ and $|\ell|_2 = 2|n|_2$.
\end{itemize}
\end{thm}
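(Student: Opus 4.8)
The plan is to translate the statement into the language of difference lists in $G=\Z_{mn}$ and to analyse the parity of a carefully chosen count of differences, stratified by $2$-adic valuation. Write $\Omega=G\setminus N$ with $N=m\Z_{mn}$, and recall that a cyclic $\ell$-cycle system is a $G$-invariant set of $\ell$-cycles whose edges partition those of $\Cay[G:\Omega]$. Since $n$ is even, the unique involution $\iota=mn/2$ lies in $N$, so $\Omega$ contains no element of order $2$; hence every edge-orbit has full length $mn$, and the edges of a fixed difference, forming a single $G$-orbit, all lie in one orbit of cycles. Consequently each orbit $O$ whose cycle-stabilizer has order $h$ covers exactly $\ell/h$ distinct $\pm$-pairs of differences, each with multiplicity $h$.

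First I would record two elementary tools. The telescoping identity $\sum_i (c_{i+1}-c_i)=0$, read modulo $2^{v}$ where $v=|mn|_2$, shows that in any cycle the directed differences sum to $0$ in $\Z_{2^{v}}$; in particular, along a full-orbit base cycle the number of edges whose difference has valuation exactly $v-1$ is even. Second, translation by $\iota$ permutes the cycles of the system; a cycle fixed by it is \emph{symmetric}, i.e.\ $\ell$ is even and, after relabelling, $c_{i+\ell/2}=c_i+\iota$, so that the first half of its differences satisfies $\sum_{i=0}^{\ell/2-1}(c_{i+1}-c_i)=\iota$. Reducing this half-sum modulo $2^{v}$ and using $|\iota|_2=v-1$, I would deduce that a symmetric cycle covers an \emph{odd} number of $\pm$-pairs of valuation $v-1$, whereas every non-symmetric orbit covers an even number.

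The heart of the argument is then a double count of $Y^{*}$, the number of $\pm$-pairs in $\Omega$ of valuation exactly $v-1$. On one hand $Y^{*}$ is computed explicitly from $m,n$ once the contribution of the removed subgroup $N$ is excised, and its parity is controlled by $|m|_2,|m-1|_2,|n|_2$. On the other hand, by the two tools above, $Y^{*}\equiv(\text{number of symmetric orbits})\pmod 2$, while the involution action gives $(\text{number of symmetric cycles})\equiv\binom{m}{2}n^{2}/\ell\pmod 2$, where $|\binom{m}{2}n^{2}/\ell|_2=|m|_2+|m-1|_2-1+2|n|_2-|\ell|_2$. Comparing the parity of $Y^{*}$, the parity of the total number of cycles, and the parity of the symmetric-orbit sizes $mn/h$ (which is governed by $v$), I would show that in each configuration (a)--(d) these parities cannot be simultaneously consistent. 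Cases (a), (b) and (d) are precisely those where $|\ell|_2$ makes the number of cycles odd, forcing a symmetric cycle to exist; case (c), where $\ell\not\equiv 0\pmod 4$, splits according to whether $\ell$ is odd (so every stabilizer is odd and $Y^{*}$ is forced even, contradicting an odd explicit value) or $\ell\equiv 2\pmod 4$ (handled via the symmetric-orbit count, using that $mn/2$ is odd when $n\equiv 2\pmod4$ and $m$ is odd).

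The main obstacle I anticipate is the $2$-adic bookkeeping itself: computing $Y^{*}$ and the per-orbit parities requires tracking differences by exact valuation rather than mere parity, correctly excising the contributions of $N$, and---most delicately---controlling symmetric orbits whose stabilizer order $h$ is a larger even number, for which the clean statement ``covers an odd number of valuation-$(v-1)$ pairs'' must be re-derived and the orbit-size parity $mn/h$ recomputed. Organizing these parities uniformly across the four regimes of $(|m|_2,|m-1|_2,|n|_2,|\ell|_2)$ is where the real work lies.
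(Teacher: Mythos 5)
First, note that the paper itself does not prove Theorem~\ref{nonexistence}: it is imported verbatim from \cite{MPP} (see also \cite{BR}), so there is no internal proof to compare against. Judged on its own terms, your proposal has a genuine gap, and it sits exactly where you locate ``the real work'': both of your \emph{elementary tools} are false as stated. The telescoping identity $\sum_i(c_{i+1}-c_i)=0$, read modulo $2^{v}$ with $v=|mn|_2$, controls only the parity of the number of \emph{odd} differences (valuation $0$); it says nothing about the number of differences of valuation exactly $v-1$, because the carries from lower-valuation terms contaminate the top bit. Concretely, in $K_3[4]$ (so $mn=12$, $v=2$) the $4$-cycle $C=(0,1,11,7)$ has directed differences $1,-2,-4,5$, which sum to $0$ yet contain exactly \emph{one} difference of valuation $v-1=1$; moreover $\Delta C=\pm\{1,2,4,5\}=\Z_{12}\setminus 3\Z_{12}$, so $\{C\}$ is a $(12,4,\sC_4)$-DF and $C$ is a legitimate full-orbit base cycle of an actual cyclic $4$-cycle system. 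The same objection defeats the half-sum claim for symmetric cycles: $\sum_{i=0}^{\ell/2-1}(c_{i+1}-c_i)=mn/2$ forces the number of valuation-$(v-1)$ terms in the half to be odd only if the lower-valuation terms contribute $0$ modulo $2^{v}$, which is not automatic. Since the entire double count of $Y^{*}$ rests on these two parity statements, the argument collapses.

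Beyond that, the ``heart of the argument'' is asserted rather than carried out: the explicit computation of $Y^{*}$, the excision of $N$, the claimed congruence between the number of symmetric cycles and $\binom{m}{2}n^2/\ell$, and the case-by-case contradictions in (a)--(d) are all left as intentions, and you yourself flag the short-orbit bookkeeping as unresolved. The strategy is in the right spirit --- the arguments in \cite{BR} and \cite{MPP} do proceed by parity counts of differences sorted by $2$-adic valuation and by analysing the action of the involution $mn/2$ --- but to control valuations above $0$ one must pass to the quotients of $\Z_{mn}$ by the subgroups $2^{j}\Z_{mn}$ (equivalently, work with the subgraphs they induce) and redo the closed-walk parity argument there, rather than read a single telescoping sum modulo $2^{v}$. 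You would also need to justify that the involution acts on a cycle it fixes as the antipodal rotation rather than an edge-axis reflection (this does follow, since $mn/2\in N$ means no edge of $\Kmn$ has difference $mn/2$, but it is not addressed). As it stands the proposal is a plausible plan whose two load-bearing lemmas are incorrect.
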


As we are interested in the case where $2\ell \mid (m-1)n$, we note the following consequence.

\begin{cor} \label{nonexistence corollary}
Suppose $2\ell \mid (m-1)n$.  There does not exist a cyclic $\ell$-cycle system of $\Kmn$ if either of the following holds:
\begin{enumerate}
\item $n\equiv 2\pmod{4}$, $\ell$ is odd and $m\equiv 2,3 \pmod{4}$, or 
\item $n \equiv 2 \pmod{4}$, $\ell\equiv 2\pmod{4}$ and $m\equiv 3\pmod{4}$.
\end{enumerate}
\end{cor}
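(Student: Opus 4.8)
The plan is to derive both statements directly from case (c) of Theorem~\ref{nonexistence}, which is the only one of the four forbidden configurations that can apply in our setting. First I would observe that $n\equiv 2\pmod 4$ implies in particular that $n$ is even, so Theorem~\ref{nonexistence} is applicable. Since $n\not\equiv 0\pmod 4$, condition (d) can never trigger; and since $m\not\equiv 0,1\pmod 4$ in both statements, neither (a) nor (b) applies either. Thus (c) is the relevant case in each situation, and the whole argument reduces to checking that the residues of $m$, $n$, and $\ell$ satisfy its hypotheses.

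For the first statement, $\ell$ odd immediately gives $\ell\not\equiv 0\pmod 4$, so the triple with $m\equiv 2,3\pmod 4$, $n\equiv 2\pmod 4$, and $\ell\not\equiv 0\pmod 4$ meets the hypotheses of case (c), and no cyclic $\ell$-cycle system of $\Kmn$ exists. For the second statement, $\ell\equiv 2\pmod 4$ likewise gives $\ell\not\equiv 0\pmod 4$, while $m\equiv 3\pmod 4$ is a special case of $m\equiv 2,3\pmod 4$; together with $n\equiv 2\pmod 4$ these again place us in case (c), ruling out the decomposition.

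The only point worth flagging is the apparent asymmetry between the two statements: the first forbids both $m\equiv 2$ and $m\equiv 3\pmod 4$, whereas the second forbids only $m\equiv 3\pmod 4$. This is not an omission but a consequence of the standing hypothesis $2\ell\mid (m-1)n$. A short $2$-adic valuation check shows that when $\ell\equiv 2\pmod 4$ and $n\equiv 2\pmod 4$ one has $|2\ell|_2=2$, whereas $m\equiv 2\pmod 4$ forces $m-1$ odd and hence $|(m-1)n|_2=1$, contradicting $2\ell\mid (m-1)n$; so the case $m\equiv 2\pmod 4$ cannot occur under our assumptions and need not be listed. I expect no genuine obstacle here, as the content is entirely contained in Theorem~\ref{nonexistence} and the work is just the routine bookkeeping of matching residues to case (c).
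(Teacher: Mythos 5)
Your proof is correct and matches the paper's (implicit) argument: the corollary is stated there as an immediate consequence of Theorem~\ref{nonexistence}, and both items do follow from its case (c) exactly as you describe. Your additional $2$-adic check explaining why $m\equiv 2\pmod 4$ is omitted from the second item (it is incompatible with $2\ell\mid(m-1)n$ when $\ell\equiv n\equiv 2\pmod 4$) is also correct.
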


\subsection{Difference Families}\label{diff}

We now describe the general method we use to construct cyclic $\ell$-cycle systems of $K_m[n]$ in the case where 
$2\ell$ is a divisor of $(m-1)n$.

We will view $\Kmn$ as the Cayley graph $\Cay[\Z_{mn}:\Z_{mn}\setminus m\Z_{mn}]$, 
where by  $m\Z_{mn}$ we mean the only subgroup of order $n$ of $\Z_{mn}$; 
thus vertices of $\Kmn$ will generally be taken as elements of $\Z_{mn}$ 
and the parts of $\Kmn$ as the cosets of $m\Z_{mn}$ in $\Z_{mn}$.

%

Given a cycle $C=(c_0,c_1,\ldots,c_{\ell-1})$ with vertices in  $\Z_{mn}$, 
the multiset $\Delta C~=~\{\pm(c_{h+1}-c_{h})\ |\ 0\leq h < \ell\}$,
where the subscripts are taken modulo $\ell$,
is called the \emph{list of differences} from $C$.
More generally, given a family $\cF$ of cycles with vertices in  $\Z_{mn}$,  
by $\Delta \cF$ we mean the union (counting multiplicities) of all multisets $\Delta C$, where  $C\in \cF$.  
\begin{nota}
We will frequently consider intervals of consecutive differences, and for $a,b\in\Z$ with $a \leq b$, we will use the notation $[a,b]$ to denote the set $\{a, a+1, \ldots, b\}$.  If $a>b$, then $[a,b] = \emptyset$.
\end{nota}

\begin{defini}
An $(mn, n, \sC_\ell)$-\emph{difference family} (DF in short) is a family 
$\cF$ of $\ell$-cycles with vertices in $\Z_{mn}$ such that 
$\Delta \cF = \Z_{mn}\setminus m\Z_{mn}$. 
In other words, an $(mn, n, \sC_\ell)$-\emph{DF} is a set of base cycles whose lists of differences partition between them 
$\Z_{mn}\setminus m\Z_{mn}$.
\end{defini}

Since $|\Delta C|= 2\ell$ for every $C\in \cF$, it follows that
$2\ell |\cF| = |\Z_{mn}\setminus m\Z_{mn}| = (m-1)n$. Therefore, a necessary condition for the existence of an $(mn, n, \sC_\ell)$-\emph{DF} $\cF$ is that $2\ell$ is a divisor of $(m-1)n$, so that $|{\cF}|=(m-1)n/2\ell$. 

Let us recall the following standard result 
(see, for instance, \cite[Proposition 1.2]{BuGio08}).

\begin{prop}\label{DFandCS}  If there exists an $(mn, n, \sC_\ell)$-\emph{DF}, 
then $2\ell$ is a divisor of $(m-1)n$, and there exists 
a cyclic $\ell$-cycle system of $K_m[n]$.
\end{prop}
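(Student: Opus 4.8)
The first assertion is exactly the counting already recorded before the statement: every $C\in\cF$ contributes $|\Delta C|=2\ell$ differences, so the partition $\Delta\cF=\Z_{mn}\setminus m\Z_{mn}$ forces $2\ell\,|\cF|=(m-1)n$, whence $2\ell\mid(m-1)n$ and $|\cF|=(m-1)n/2\ell$. For the second assertion, the plan is to exhibit the decomposition explicitly as the collection of all $\Z_{mn}$-translates of the base cycles. Set $\cB=\{C+g\mid C\in\cF,\ g\in\Z_{mn}\}$. By construction $\cB$ is invariant under the translation action of $\Z_{mn}$, so once it is shown to be a cycle system it is automatically cyclic. Each $C+g$ is an $\ell$-cycle, and it is a subgraph of $\Kmn=\Cay[\Z_{mn}:\Z_{mn}\setminus m\Z_{mn}]$, since every edge $\{c_h+g,c_{h+1}+g\}$ has difference $\pm(c_{h+1}-c_h)\in\Delta C\subseteq\Z_{mn}\setminus m\Z_{mn}$, so no difference lies in $m\Z_{mn}$. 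It remains to prove that $\cB$ partitions the edge set of $\Kmn$.

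The heart of the argument is to show that every edge of $\Kmn$ lies on exactly one member of $\cB$. Fix an edge $e=\{x,x+d\}$ with $d\in\Z_{mn}\setminus m\Z_{mn}$, and write $C=(c_0,\dots,c_{\ell-1})$ for a generic base cycle. I would first use the difference-family condition to locate a unique base edge: since $\Delta\cF=\Z_{mn}\setminus m\Z_{mn}$ is an equality of multisets, each of $d$ and $-d$ occurs in $\Delta\cF$ with multiplicity one, so there is a unique pair $(C,h)$ with $c_{h+1}-c_h\in\{d,-d\}$. I would then recover the unique translate: solving $\{c_h+g,c_{h+1}+g\}=\{x,x+d\}$ for $g$ gives $g=x-c_h$ when $c_{h+1}-c_h=d$ and $g=x+d-c_h$ when $c_{h+1}-c_h=-d$, in either case a single value of $g$. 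Thus $e$ lies on $C+g$ for exactly one pair $(C,g)$, and—the base edge being globally unique—exactly once on that cycle. As distinct edges of $\Kmn$ are accounted for independently, $\cB$ is an exact edge-decomposition, i.e.\ a cyclic $\ell$-cycle system of $\Kmn$.

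The one point requiring care—and the main obstacle to the bijection above—is the involution case $d=-d$, i.e.\ $2d=0$. If $d=mn/2$ were a legitimate difference, a single base edge of difference $mn/2$ would contribute $mn/2$ to $\Delta\cF$ with multiplicity two, and its orbit would cover each edge of that difference class twice, breaking the exact count. I would dispose of this by noting that it cannot occur here: the divisibility $2\ell\mid(m-1)n$ forces $(m-1)n$ to be even, and then if $n$ is even we have $mn/2=m(n/2)\in m\Z_{mn}$, while if $n$ is odd then $m-1$ must be even, so $mn$ is odd and $\Z_{mn}$ has no involution at all. Hence no element of the connection set $\Z_{mn}\setminus m\Z_{mn}$ is an involution, every difference satisfies $d\neq-d$, and the correspondence of the previous paragraph is a genuine bijection. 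Finally, the multiplicity-free property of $\Delta\cF$ also rules out any nontrivial cycle stabilizer (a nontrivial stabilizer would force a repeated difference), so each orbit is full of size $mn$, and the edge count $|\cF|\cdot mn\cdot\ell=\binom{m}{2}n^2$ matches the number of edges of $\Kmn$, confirming consistency. This is, up to the cited $\emph{standard result}$, precisely the content of Proposition~\ref{DFandCS}.
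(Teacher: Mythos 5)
Your proposal is correct and takes the same approach as the paper: the paper's proof simply forms $\bigcup_{i}Orb_{\Z_{mn}}(C_i)$ and declares the verification ``easy to check'' (citing a standard result), and you supply exactly that verification. Your careful treatment of the potential involution $d=mn/2$ (showing it always lies in $m\Z_{mn}$ or does not exist) is the one detail the paper glosses over, and you handle it correctly.
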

\begin{proof} Let $\cF=\{C_1, C_2, \ldots, C_t\}$ be
an $(mn, n, \sC_\ell)$-\emph{DF}. 
It is easy to check that $\bigcup_{i=1}^t Orb_{\Z_{mn}}(C_i)$ is 
the desired cyclic $\ell$-cycle system of $K_m[n]$.
\end{proof}

Note that in the cycle system we obtain from the difference family all cycles will have trivial stabilizer, so that all the orbits on the cycles are full orbits.

\begin{ex}
Let $\ell=6$, $m=7$, $n=4$, and let 
$C_1 = (0,-1,2,-4,4,-5)$ and  $C_2 = (0,-2,2,-9,3,-10)$ be two $6$-cycles with vertices in $\Z_{28}$. Since
\[
\Delta C_1 = \pm \{1,3,5,6,8,9\} \mbox{ and } 
\Delta C_2 = \pm \{2,4,10,11,12,13\},
\]
we have that $\Delta C_1\cup\Delta C_2 = 
\pm[1,13]\setminus\{7\} = \Z_{28}\setminus 7\cdot\Z_{28}$, hence
$\cF=\{C_1, C_2\}$ is a $(28, 4, \sC_6)$-DF.
It is not difficult to check that
the set $\{C_1 + j, C_2+ j \mid j\in \Z_{28}\}$ of all translates of $C_1$ and $C_2$ under the action of $\Z_{28}$ is a cyclic $6$-cycle system of $K_{7}[4]$.
\end{ex}

As a consequence of Proposition~\ref{DFandCS}, Corollary~\ref{nonexistence corollary} gives further necessary conditions for the existence of an $(mn,n,\mathscr{C}_{\ell})$-DF.  We thus make the following definition.
\begin{defini}
Let $m, \ell \geq 3$ and $n \geq 1$ be integers.  We call the triple $(mn,n,\ell)$ {\em admissible} if $2\ell \mid (m-1)n$, and the following conditions are both satisfied.
\begin{enumerate}
\item If $n \equiv 2 \pmod{4}$ and $\ell$ is odd, then $m \equiv 0$ or $1\pmod{4}$.
\item If $n \equiv 2 \pmod{4}$ and $\ell \equiv 2 \pmod{4}$, then $m \not\equiv 3 \pmod{4}$.
\end{enumerate}
\end{defini}
Thus if there exists an $(mn,n,\mathscr{C}_{\ell})$-DF, then $(mn,n,\ell)$ is admissible.

We note that the results quoted in Theorems~\ref{complete}, \ref{part size 2} and \ref{part size l} are proved using difference families.  For future reference, we restate these results using the language of difference families.
\begin{thm}[\cite{BDF2003}] \label{complete DF}
For any integers $\ell \geq 3$  and $m$ such that $2\ell \mid (m-1)$, there is an $(m,1,\mathscr{C}_{\ell})$-DF.
\end{thm}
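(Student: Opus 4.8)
The plan is to construct the difference family explicitly. Since $2\ell \mid (m-1)$ is equivalent to $m = 2\ell t + 1$ for some integer $t \geq 1$, the graph is $K_m = \Cay[\Z_m:\Z_m\setminus\{0\}]$ (here $m$ is odd and $m\Z_m=\{0\}$), and an $(m,1,\mathscr{C}_\ell)$-DF is exactly a family $\cF$ of $t$ $\ell$-cycles whose lists of differences partition the nonzero elements $\pm\{1,2,\ldots,\ell t\}$ of $\Z_m$. Writing each difference as $\pm d$ with $1 \leq d \leq (m-1)/2 = \ell t$, the task reduces to partitioning $\{1, 2, \ldots, \ell t\}$ into $t$ blocks of size $\ell$ and realizing each block $S$ as the list of positive differences $\{|c_{h+1}-c_h|\}$ of a single $\ell$-cycle in $\Z_m$. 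First I would isolate this realizability question as a lemma and treat the assembly afterwards.

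For the building block, given a size-$\ell$ set $S \subseteq \{1, \ldots, \ell t\}$ I would build the cycle by a zig-zag: fix an ordering $d_1, d_2, \ldots, d_\ell$ of $S$, choose signs $\eps_h \in \{\pm 1\}$, and set $c_0 = 0$ and $c_{h} = c_{h-1} + \eps_h d_h$ (indices mod $\ell$). This closed walk is a genuine $\ell$-cycle realizing $S$ precisely when (i) the signed sum satisfies $\sum_{h=1}^{\ell} \eps_h d_h \equiv 0 \pmod m$ and (ii) the partial sums $c_0, c_1, \ldots, c_{\ell-1}$ are pairwise distinct in $\Z_m$. The crucial point — and what makes every block realizable — is that the signed sum need only vanish modulo the odd number $m$, not over $\Z$; thus even when $\sum_{d \in S} d$ is odd we may aim for a signed sum equal to $\pm m$ (or a larger odd multiple of $m$) rather than $0$. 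For a block of consecutive integers the zig-zag keeps the partial sums within a window shorter than $m$, so verifying (ii) is routine.

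The assembly, and the main obstacle, is controlling parity. Taking the natural blocks $B_j = [j\ell+1,(j+1)\ell]$, the parity of $\sum_{d \in B_j} d = \ell(j\ell+1)+\binom{\ell}{2}$ is governed by $\ell \bmod 4$: it is even for every $j$ when $\ell \equiv 0 \pmod 4$ (so the balanced zig-zag with signed sum $0$ works directly), it is odd for every $j$ when $\ell \equiv 2 \pmod 4$, and it alternates with $j$ when $\ell$ is odd. For a block with odd difference-sum one cannot have signed sum $0$, so I would repair it in one of two ways: for a high-index block, whose differences are large enough that $\sum_{d \in B_j} d \geq m$, choose signs giving signed sum $\pm m$; for a low-index block, swap a single odd element with an even element of an adjacent block, which preserves both block sizes and simultaneously flips both parities to even. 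I expect the hardest case to be $\ell \equiv 2 \pmod 4$, where the parity defect is present in every block and so must be cleared by pairing consecutive blocks (with a wrap-around cycle absorbing a leftover block when $t$ is odd), all while re-checking that each repaired block still yields a simple cycle via condition (ii). Organising this case analysis by $\ell \bmod 4$, with an explicit sign pattern and a direct distinctness check in each case, would complete the construction and hence the proof.
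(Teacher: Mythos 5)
The paper does not actually prove this statement: it is imported wholesale from \cite{BDF2003}, so your sketch has to stand on its own. Its architecture --- partition $[1,\ell t]$ into $\ell$-sets and realize each as the difference list of a signed zig-zag walk that closes modulo $m$ --- is indeed the architecture of the known proofs (Kotzig and Rosa for even $\ell$, Buratti--Del Fra and Bryant--Gavlas--Ling for odd $\ell$), and your $\ell\equiv 0\pmod 4$ case is essentially the paper's Theorem~\ref{4lambda} with $n=1$, obtained via Lemma~\ref{lemma1}. But both of the repair mechanisms you rely on for the remaining congruence classes break down.

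First, ``choose signs giving signed sum $\pm m$'' is not available merely because the block sum is at least $m$: the achievable signed sums of a set $S$ with sum $\Sigma$ are exactly $\Sigma-2s$ with $s$ a subset sum of $S$, and for an interval of consecutive integers bounded away from $0$ the subset sums have gaps. Concretely, for $\ell=3$ and $m=13$ the top block $B_1=\{4,5,6\}$ has odd sum $15\geq m$, yet neither $(15-13)/2=1$ nor $(15+13)/2=14$ is a subset sum of $\{4,5,6\}$, so no choice of signs closes the walk; the classical solution replaces the interval blocks by $\{1,3,4\}$ and $\{2,5,6\}$. Second, the parity-flipping swap turns two odd-sum blocks into two even-sum blocks, but when $\ell$ is odd the sums of $B_j$ alternate in parity with $j$, so a swap between \emph{adjacent} blocks converts an (even, odd) pair into an (odd, even) pair and repairs nothing; you must instead pair the odd-sum blocks with one another, and when their number is odd one block is still left over and must be closed at $\pm m$, which returns you to the first problem. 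Finally, once blocks are no longer intervals, or once the signed sum is $\pm m$ so that the partial sums sweep out a range of width comparable to $m$ and wrap around, the distinctness check in your condition (ii) is no longer the routine ``window shorter than $m$'' observation and must be argued separately. These are precisely the points where \cite{BDF2003} has to work hard, by designing bespoke non-interval difference blocks; as it stands your proposal is a reasonable strategy whose essential combinatorial content is still missing.
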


\begin{thm}[\cite{BGL}] \label{part size 2 DF}
If $\ell \mid (m-1)$, then there is a $(2m,2,\mathscr{C}_{\ell})$-DF if and only if $m \equiv 0$ or $1 \pmod{4}$.
\end{thm}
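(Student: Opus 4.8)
The plan is to prove the two directions of Theorem~\ref{part size 2 DF} separately, deriving necessity from the nonexistence results already recorded and sufficiency from an explicit difference-family construction. For necessity, suppose a $(2m,2,\sC_\ell)$-DF exists with $\ell\mid(m-1)$. By Proposition~\ref{DFandCS} it yields a cyclic $\ell$-cycle system of $K_m[2]$, and since $n=2$ the hypothesis $\ell\mid(m-1)$ gives $2\ell\mid(m-1)n$, so Corollary~\ref{nonexistence corollary} applies. If $m\equiv 2\pmod 4$ then $m-1$ is odd, hence $\ell$ is odd, and the first case of the corollary forbids the system; if $m\equiv 3\pmod 4$ then $m-1\equiv 2\pmod 4$, so $\ell$ is either odd (ruled out by the first case) or $\equiv 2\pmod 4$ (ruled out by the second). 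Thus $m\equiv 0$ or $1\pmod 4$; this direction should be routine.

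For sufficiency, set $t=(m-1)/\ell$. The multiset to be covered is exactly $\pm[1,m-1]$, and the goal is to build $t$ base $\ell$-cycles $C_1,\dots,C_t$ whose directed edge-differences have absolute values $1,2,\dots,m-1$, each occurring once, so that $\Delta\cF=\Z_{2m}\setminus m\Z_{2m}$ for $\cF=\{C_1,\dots,C_t\}$. I would work in the standard framework: an $\ell$-tuple of differences $d_0,\dots,d_{\ell-1}$ is realized by a genuine cycle precisely when $\sum_h d_h\equiv 0\pmod{2m}$ (closure) and the partial sums $0,d_0,d_0+d_1,\dots$ are $\ell$ distinct elements of $\Z_{2m}$ (distinct vertices). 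So the problem reduces to distributing $\{1,\dots,m-1\}$ into $t$ groups of size $\ell$, assigning a sign to each entry so that the signed sum of every group vanishes modulo $2m$, and ordering each group so that its partial sums are distinct.

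The construction I would attempt first partitions $[1,m-1]$ into the $t$ consecutive blocks $B_j=[(j-1)\ell+1,\,j\ell]$ and turns each $B_j$ into a cycle using a zig-zag (alternating-sign) ordering, which keeps the partial sums inside a short window and thereby makes the vertices automatically distinct. The parity hypothesis $m\equiv 0$ or $1\pmod 4$ is exactly what makes the sign choice possible: in the single-block case $t=1$ it forces $\ell\equiv 3$ or $0\pmod 4$, which is precisely when $1+2+\cdots+\ell=\ell(\ell+1)/2$ is even and the signs can be balanced to give signed sum $0$. For $t>1$ the balancing is done by pairing small and large differences across blocks so that each resulting group still sums to $0$, and I would organize the write-up according to the parity of $\ell$ and the residue of $m\pmod 4$.

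The step I expect to be the main obstacle is reconciling the two competing constraints inside each cycle: the signs must make the signed sum a multiple of $2m$, yet the ordering must keep all partial sums distinct in $\Z_{2m}$. When a group contains differences comparable to $m$, a sign choice that balances the sum can drive the running partial sums around the modulus and cause collisions, so the zig-zag ordering has to be tuned, and the boundary cases (small $t$, $\ell$ even versus odd, and the two residues of $m$) will need separate treatment. Should the block approach become unwieldy, the alternative I would keep in reserve is a graceful-labelling / sequencing argument that prescribes the vertices directly as partial sums of a single ordered interval and reads off the differences, trading the explicit sign bookkeeping for one distinctness argument.
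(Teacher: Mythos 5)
First, a point of reference: the paper does not prove this statement at all --- it is quoted from \cite{BGL}, and within the paper it is simply a restatement of Theorem \ref{part size 2} in difference-family language. Your necessity argument is correct and is exactly what the paper's own machinery yields: a $(2m,2,\sC_\ell)$-DF would give a cyclic system via Proposition \ref{DFandCS}, and since $\ell\mid m-1$ forces $\ell$ odd when $m\equiv 2\pmod 4$, and $\ell$ odd or $\ell\equiv 2\pmod 4$ when $m\equiv 3\pmod 4$, Corollary \ref{nonexistence corollary} rules out $m\equiv 2,3\pmod 4$.

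The sufficiency direction, however, contains a step that fails as stated. The partition of $[1,m-1]$ into consecutive blocks $B_j=[(j-1)\ell+1,\,j\ell]$ generally does not admit the required sign assignment: already for $\ell=3$ and $m=13$ the block $\{4,5,6\}$ has signed sums $\pm4\pm5\pm6\in\{\pm3,\pm5,\pm7,\pm15\}$, none of which is $\equiv 0\pmod{26}$, and $\{7,8,9\}$ fails likewise; for $\ell=3$ essentially only the first block works, and the correct partition is into non-consecutive Skolem-type triples such as $\{1,5,6\},\{2,8,10\},\{3,9,12\},\{4,7,11\}$. Your fallback remark about ``pairing small and large differences across blocks'' is exactly where the whole theorem lives: producing such partitions (Skolem- and Rosa-type difference triples and sequences) for every $m\equiv 0,1\pmod 4$ is the nontrivial combinatorial content of \cite{BGL}, and it does not follow from the parity count you give, which you verify only for $t=1$ and only at the level of the total sum being even (even there, one must still rule out or exploit signed sums equal to $\pm 2m$ and check distinctness of partial sums). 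Note also that the one tool this paper supplies for turning difference sets into cycles, Lemma \ref{lemma1}, applies only to even-length cycles (balanced $2k$-sets), so it cannot be invoked for odd $\ell$ here. As written, the sufficiency half is a plan with the central construction missing.
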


\begin{thm}[\cite{BDF2003}] \label{part size l DF}
Let $m, \ell \geq 3$ be odd with $(m,\ell) \neq (3,3)$.  Then there is a 
$(m\ell,\ell, \mathscr{C}_\ell)$-DF.
\end{thm}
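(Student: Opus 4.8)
The plan is to construct the $(m\ell,\ell,\sC_\ell)$-DF explicitly by difference methods; by Proposition~\ref{DFandCS} this is exactly what is required. It is worth noting at the outset that this is genuinely stronger than Theorem~\ref{part size l}: a difference family is precisely a cyclic system all of whose orbits are \emph{full}, since a cycle lying in a short orbit contributes each of its differences with multiplicity equal to the order of its (nontrivial) stabilizer, never $1$. So one cannot simply quote the cyclic system of Theorem~\ref{part size l} and pass to orbit representatives; the full-orbit property must be built in. First I would set up the bookkeeping. Working in $\Z_{m\ell}$ with $N=m\Z_{m\ell}$, the differences to be covered are the non-multiples of $m$; taking positive representatives, these form
\[
D=\{1,2,\ldots,(m\ell-1)/2\}\setminus\{m,2m,\ldots,\tfrac{\ell-1}{2}m\},
\]
of size $\ell(m-1)/2$. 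Since each $\ell$-cycle accounts for $\ell$ elements of $D$, I need exactly $(m-1)/2$ base cycles (an integer, as $m$ is odd), whose difference lists must partition $D$, and hence, with signs, all of $\Z_{m\ell}\setminus m\Z_{m\ell}$.

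The main step is to split $D$ into $(m-1)/2$ blocks of size $\ell$ and realize each block $\{D_1,\ldots,D_\ell\}$ as a closed $\ell$-cycle. Because the $\ell$ consecutive gaps of a cycle sum to zero identically, realizing a block amounts to (i) choosing signs $\epsilon_h\in\{\pm1\}$ with $\sum_h\epsilon_h D_h\equiv 0\pmod{m\ell}$, and (ii) ordering the signed gaps so that the $\ell$ partial sums (the vertices) are distinct. For blocks of consecutive integers I would realize (i)--(ii) via the standard zig-zag pattern $0,D_1,D_1-D_2,\ldots$, where the freedom in signs and order makes both conditions routine away from small cases. A tempting way to organize the blocks is by residue classes mod $m$: the set $\Z_{m\ell}\setminus m\Z_{m\ell}$ splits into $(m-1)/2$ symmetric pairs of classes $\{r,-r\}$, each contributing exactly $\ell$ positive representatives, which matches the block count. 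I would start from this grouping but must then perturb it, because a cycle whose differences all lie in a single pair $\{r,-r\}$ generally cannot close: the net residue shift is $r\sum_h\eta_h$ with $\eta_h\in\{\pm1\}$, and $\sum_h\eta_h$ is odd and nonzero, so when $\gcd(r,m)=1$ and $m>\ell$ it cannot vanish mod $m$. Hence some mixing of residues across blocks is unavoidable.

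The hardest part, as this already signals, is reconciling the \emph{exact} tiling of $D$ with the closure-and-distinctness constraints on each individual cycle: the excised multiples of $m$ break the clean interval patterns, and the divisibility/parity interplay between $m$ and $\ell$ (in particular the value of $\gcd(m,\ell)$) forces the argument to branch into cases. I would expect to give a uniform construction covering most pairs $(m,\ell)$ and then dispose of a short list of small or degenerate pairs by ad hoc base cycles — above all the genuinely excluded case $(m,\ell)=(3,3)$, where $D=\{1,2,4\}$ and no choice of signs gives $\pm1\pm2\pm4\equiv 0\pmod 9$, so that $D$ admits no closed $3$-cycle. Finally, admissibility is immediate here and requires no case analysis: with $n=\ell$ odd and $m$ odd one has $2\ell\mid(m-1)\ell$, and the hypotheses $n\equiv 2\pmod 4$ never hold, so Corollary~\ref{nonexistence corollary} imposes no obstruction and only the constructive existence remains to be established.
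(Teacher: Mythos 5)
The paper offers no proof of this statement: Theorem~\ref{part size l DF} is imported from \cite{BDF2003} and is simply Theorem~\ref{part size l} recast in difference-family language (the paper notes that the results of \cite{BDF2003} are themselves proved via difference families). Your proposal must therefore stand on its own as a construction, and it does not: it is a correct framing of the problem followed by a deferral of the problem. Your bookkeeping is right ($|D|=\ell(m-1)/2$, so $(m-1)/2$ base cycles are needed), your remark that a difference family is genuinely stronger than a cyclic system is right, your observation that a block confined to a single residue pair $\pm r\pmod m$ with $\gcd(r,m)=1$ and $m>\ell$ cannot close is right, and your verification that $(3,3)$ is a true exception is right. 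But the theorem is precisely the assertion that $D$ admits a partition into $(m-1)/2$ $\ell$-subsets each realizable as $\Delta C$ for an $\ell$-cycle $C$, and you never exhibit such a partition nor the cycles: the proposal ends by announcing that you ``would expect to give a uniform construction'' plus ad hoc small cases. That expected construction is the entire content of the theorem and is where \cite{BDF2003} does all of its work.

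Moreover, the one place where you declare the remaining work ``routine'' is exactly where it is not. For odd $\ell$ the closure condition $\sum_h\epsilon_h D_h\equiv 0\pmod{m\ell}$ cannot be met by the plain zig-zag on a block of consecutive integers $a,a+1,\ldots,a+\ell-1$: the alternating sum equals $a+(\ell-1)/2$, which lies strictly between $0$ and $m\ell$ and so is never $0$ modulo $m\ell$. This is precisely why the present paper's own odd-length constructions (Sections~\ref{odd}--\ref{odd2}) are so much more delicate than the even case handled by Lemma~\ref{lemma1}: each base cycle there is assembled from a long path whose differences come in consecutive or $m$-spaced pairs together with a short correcting path carrying a few leftover differences, and the sets $A$, $A^*$, $B$, $X$, $Y$ are designed globally so that the corrections $p_a+1$ sweep out exactly the reserved block $B$. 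Some analogue of that global design (or of the Rosa/Skolem-type sequences alluded to before Proposition~\ref{part size 2l}) is unavoidable here, and supplying it --- together with the case analysis on $\gcd(m,\ell)$ and the small pairs you acknowledge but do not treat --- is the missing proof.
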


\subsection{A Blow-up Construction} \label{Section:cc}

The following result will be an essential tool in our later constructions to blow up parts in a cyclic cycle system of $K_{m}[n]$ and increase cycle lengths.

\begin{thm}\label{cc} 
If there is an $(mw, w, \sC_\ell)$-DF,
$u$ is a positive divisor of $s>0$, and $\ell(s-1)$ is even, then the following hold:
\begin{enumerate}
  \item there exists an $(mws, ws,\sC_{\ell})$-DF; 
  \item there exists a cyclic $\ell u$-cycle system of $K_{m}[ws]$.
\end{enumerate} 
\end{thm}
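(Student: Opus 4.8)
The plan is to realize both statements through a single \emph{inflation} of the base difference family, lifting cycles from $\Z_{mw}$ to $\Z_{mws}$ along the reduction homomorphism $\pi\colon\Z_{mws}\to\Z_{mw}$. The starting observation is that $\pi$ maps $\Z_{mws}\setminus m\Z_{mws}$ onto $\Z_{mw}\setminus m\Z_{mw}$ with all fibres of size $s$: the fibre over the class of $d\in[0,mw)$ with $m\nmid d$ is $\{d+jmw\mid 0\le j<s\}$, and these fibres partition $\Z_{mws}\setminus m\Z_{mws}$; moreover the partition is compatible with negation, since $-(d+jmw)$ lies in the fibre over $-d$. Hence, if I can replace each base $\ell$-cycle $C$ by cycles in $\Z_{mws}$ whose combined list of differences is exactly $\pi^{-1}(\Delta C)$, then summing over the base family $\cF$ (which satisfies $\Delta\cF=\Z_{mw}\setminus m\Z_{mw}$) produces a family whose differences are exactly $\Z_{mws}\setminus m\Z_{mws}$, i.e.\ a DF over $\Z_{mws}$.

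For part~1, fix a base $\ell$-cycle $C=(c_0,\dots,c_{\ell-1})$ with consecutive signed differences $e_h=c_{h+1}-c_h$, so $\sum_h e_h=0$. I would produce $s$ new $\ell$-cycles $C_0,\dots,C_{s-1}$, where $C_j$ has signed differences $e_h+\sigma_j(h)\,mw$ for entries $\sigma_j(h)\in\{0,\dots,s-1\}$. Each $C_j$ is a genuine cycle of $K_m[ws]$: its differences reduce to $e_h\not\equiv 0\pmod m$, so they lie in $\Z_{mws}\setminus m\Z_{mws}$, and its vertices are distinct because they reduce mod $mw$ to the distinct $c_h$. The cycle $C_j$ closes precisely when $\sum_h\sigma_j(h)\equiv 0\pmod s$, while the whole fibre $\pi^{-1}(\Delta C)$ is covered with correct multiplicity exactly when, for each column $h$, the values $\sigma_0(h),\dots,\sigma_{s-1}(h)$ run over all of $\Z_s$. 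Thus part~1 reduces to an array problem: find an $s\times\ell$ matrix over $\Z_s$ whose columns are permutations of $\Z_s$ and whose rows each sum to $0$. Summing all entries shows such a matrix forces $\ell\,s(s-1)/2\equiv 0\pmod s$, i.e.\ $\ell(s-1)$ even; conversely this suffices, since one can take the columns to be the linear permutations $j\mapsto a_h j$ for units $a_h\in\Z_s^{\times}$ with $\sum_h a_h\equiv 0$ (for instance $\pm1$ in balanced pairs when $\ell$ is even, plus an extra triple $1,1,-2$ when $\ell$ is odd, which forces $s$ odd so that $2$ is a unit). This is exactly where the hypothesis that $\ell(s-1)$ is even is used.

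For part~2 I would run the same inflation but allow each new cycle to wind $u$ times around the base cycle before closing, giving it length $\ell u$. Since $u\mid s$, the $s$ layers available in each column split into $s/u$ blocks of size $u$; for each base cycle $C$ I would build $s/u$ cycles $\tilde C_0,\dots,\tilde C_{s/u-1}$ of length $\ell u$, each the concatenation of $u$ loops, with the loop- and column-indexed shifts again forming, in every column, a permutation of $\Z_s$ (to cover the fibre once) and with total shift $\equiv 0\pmod s$ around each long cycle (so that it closes). The resulting family is an $(mws,ws,\sC_{\ell u})$-DF, and Proposition~\ref{DFandCS} then yields the desired cyclic $\ell u$-cycle system of $K_m[ws]$; note that $2\ell u\mid(m-1)ws$ follows from $2\ell\mid(m-1)w$ together with $u\mid s$, so the number of base cycles is an integer, and once more the covering/closing balance is feasible precisely because $\ell(s-1)$ is even.

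The routine points are the divisibility bookkeeping and the difference computation; the real obstacle lies entirely in part~2, namely guaranteeing that each length-$\ell u$ object is a \emph{simple} cycle rather than a union of shorter ones. Unlike in part~1, the $u$ loops of $\tilde C_r$ revisit the same $\ell$ residues modulo $mw$, and the layer of a vertex is a \emph{partial sum} of shifts rather than a single shift; so two vertices in different loops sharing a base-position collide exactly when their accumulated layers coincide modulo $s$, and the per-column covering condition does not by itself rule this out. Avoiding these collisions, while simultaneously meeting the per-column permutation and per-cycle closing constraints, is the delicate part. I expect to resolve it by choosing the shift array with a controlled net layer drift of $s/u$ per loop, so that the partial layer sums of distinct loops are forced into distinct residue classes and no premature return can occur, checking separately the parities of $\ell$ and $s$ permitted by $\ell(s-1)$ even. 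The remaining verifications---that all differences are non-multiples of $m$, that each fibre is hit exactly once, and that the orbits are full---are then straightforward.
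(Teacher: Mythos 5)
Your argument for part~1 is correct and is in substance the paper's own construction for $u=1$, repackaged: the ``permutation columns, zero row sums'' array over $\Z_s$ is exactly what the paper's case-by-case vertex shifts realize, and your observation that such an array exists precisely when $\ell(s-1)$ is even cleanly isolates the role of that hypothesis. The problem is part~2. You have correctly located the danger (collisions between the $u$ loops of a wound cycle), but for your construction the difficulty is not merely delicate --- it is insurmountable, already in the smallest admissible case $u=s=2$ (so $\ell$ even, e.g.\ $\ell=4$, $m=9$, $w=1$, which the theorem must cover). Write the lifted vertices as $\tilde c_{q\ell+h}=c_h+L(q,h)\,mw$ with $L(q,h)\in\Z_2$. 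Vertex-distinctness forces $L(1,h)=L(0,h)+1$ for every $h$; but then the two edges of the long cycle lying over $e_h=c_{h+1}-c_h$ carry the layer increments $L(q,h+1)-L(q,h)$, which coincide for $q=0,1$, so both edges realize the \emph{same} element of the fibre over $e_h$, covering it with multiplicity $(2,0)$ instead of $(1,1)$. Conversely, covering every fibre exactly once forces $g(h)=L(1,h)-L(0,h)$ to satisfy $g(h+1)=g(h)+1$, so $g$ vanishes on half the columns and those columns give repeated vertices. Hence no choice of shifts --- drift-controlled or otherwise --- makes a $u$-fold wound lift of a base cycle into a full-orbit $\ell u$-cycle whose difference list is exactly $\pi^{-1}(\Delta C)$; part~2 cannot be completed along these lines.

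The missing idea is that part~2 should not be reduced to an $(mws,ws,\sC_{\ell u})$-DF at all. The paper takes the long cycle $C^j$ to be \emph{invariant} under translation by $(s/u)mw$ --- each of its $u$ loops is the previous one translated by $(s/u)mw$ --- i.e., it deliberately gives $C^j$ a stabilizer of order $u$. Every difference of $C^j$ then occurs with multiplicity $u$, but the orbit of $C^j$ has length only $mws/u$, and the two effects cancel: one checks that the translates cover every edge of $K_m[ws]$ at least once and that the total number of edges covered, with multiplicity, does not exceed $|E(K_m[ws])|$, so the covering is exact. Proposition~\ref{DFandCS} is thus bypassed for $u>1$; insisting on staying within the full-orbit, difference-family framework is exactly what breaks your proof.
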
    
\begin{proof}

Let $\cF$ be an $(mw, w, \sC_\ell)$-\emph{DF},
let $u$ be a positive divisor of $s$ and set  $t=s/u$. 
For every cycle $C$ of $\cF$, with $C=(c_0, c_1, \ldots, c_{\ell-1})$,
and for every $j\in [0,s-1]$, we define the $\ell u$-cycle 
   $C^j=(c^j_0, c^j_1, \ldots, c^j_{\ell u-1})$ as follows:
   \[
    c^j_i = 
    \begin{cases}
        c_i          & \text{if $i$ is even, and $i\leq\ell-2$},\\
        c_i + jmw    & \text{if $i$ is odd, and $i\leq\ell-1$},\\
        c_i + jmw/2  & \text{if $i=\ell-1$ is even}, \\
      c^j_r + qtmw & \text{if $i=q\ell + r$ with $1\leq q\leq u-1$ and $0\leq r<\ell$.}
    \end{cases}
   \]  
We point out that the vertices of $C$ are considered as integers in $[0,mw-1]$,
while the vertices of $C^j$ are elements of $\Z_{mws}$.

We recall that, by assumption, $(s-1)\ell$ is even, hence $s$ is odd when 
$\ell$ is odd. In this case, the map 
$x\in mw\Z_{mws} \mapsto 2x\in mw\Z_{mws}$ is bijective, which means that for every 
$x\in mw\Z_{mws}$ the element $x/2$ is uniquely determined.

Set $\cF' = \{C^j\mid C\in \cF, j\in[0,s-1]\}$. We start showing that
$\Delta \cF'$ contains every element of $\Z_{mws}~\setminus~m\Z_{mws}$.
Let $d = mwj + k \in \Z_{mws}\setminus m\Z_{mws}$, where $j\in[0,s-1]$
and $k\in [0,mw-1]$ is not a multiple of $m$. Recalling that $\cF$ is 
an $(mw, w, \sC_\ell)$-DF, 
there exists a cycle
$C=(c_0, c_1, \ldots, c_{\ell-1})$ of $\cF$ such that 
$c_{i+1} \equiv c_i + k \pmod{mw}$ (replacing $k$ with $-k$ if necessary).
It is not difficult to check that $d\in \Delta C^{h}$ where $h\in[0,s-1]$ is the following:
   \[
    h \underset{\pmod{s}}{\equiv}
    \begin{cases}
       j      & \text{if $i$ is even, and $i\leq\ell-2$},\\
      -j      & \text{if $i$ is odd, and $i\leq\ell-3$},\\
     -2j      & \text{if $i=\ell-2$ is odd}, \\         
      2(t-j)  & \text{if $i=\ell-1$ is even}, \\
      (t-j)    & \text{if $i=\ell-1$ is odd}. \\      
    \end{cases}
   \] 
Hence, $\Delta \cF' \supseteq \Z_{mws}~\setminus~m\Z_{mws}$. 
Since $|\Delta \cF'| = 2\ell u|\cF'|= 2\ell u|\cF|s = (m-1)wsu$, when $u=1$ we have that $\Delta \cF' = \Z_{mws}~\setminus~m\Z_{mws}$, hence $\cF'$ is the desired $(mws, ws, \sC_{\ell})$-DF.

It is left to show that $\cF''= \bigcup_{C\in\cF'} Orb(C)$ 
is a cyclic $\ell u$-cycle system of $K_m[ws]$, where 
$Orb(C)$ denotes the $\Z_{mws}$-orbit of $C$.
We denote by $\epsilon$ the number of edges of $K_m[ws]$ -- counted with their multiplicity -- covered by the cycles in $\cF''$.
By construction, $C + tmw = C$ for every $C\in \cF'$, 
then $|Orb(C)|\leq \frac{mws}{u}$, hence
\begin{equation}\label{upperbound}
\epsilon= \ell u |\cF''| \leq \ell u |\cF'|\frac{mws}{u} = 
\ell u s|\cF|\frac{mws}{u} 
= |E(K_m[ws])|. 
\end{equation}
Therefore, it is enough to show that every edge of $K_{m}[ws]$ lies in 
at least one cycle of $\cF''$. 
By recalling that $\Delta \cF' \supseteq \Z_{mws}~\setminus~m\Z_{mws}$, it follows that every edge $\{x, x+d\}$ of $K_{m}[ws]$ -- hence with $d\not\in m\Z_{ws}$ -- belongs to some translate of the cycle of 
$\cF'$ whose list of differences contains $\pm d$. Therefore,
$\cF''$ is a cyclic $\ell u$-cycle system of $K_m[ws]$.
\end{proof}

Note that some recursive constructions similar to the one above can be found in \cite{BP}.

\begin{ex} Let $m=s=3$ and $\ell=w=5$. Also,
  let $c_0 = 0, c_1 = 1, c_2 = 5, c_3 = 10, c_4 = 8$. Setting $C=(c_0, c_1, c_2, c_3, c_4)$, we have that
  $\Delta C = \pm\{1,2,4,5,8\}$. Hence, if the vertices of $C$ are considered modulo $15$, we obtain
  a $(15, 5, \sC_5)$-DF.
 
  We take $u=1$ and, following the proof of Theorem \ref{cc}, for every $j \in [0, 2]$ 
  we define the $5$-cycle 
  $C^j=(c^j_0, c^j_1, c^j_2, c^j_3, c^j_4)$ as follows:
  \[
    c^j_i = 
    \begin{cases}
        c_i          & \text{if $i=0, 2$},\\
        c_i + 15j    & \text{if $i=1, 3$},\\
        c_i + 30j    & \text{if $i=4$}. \\      
    \end{cases}
  \]    
  Hence $C^0=C$, $C^1 = (0, 16, 5, 25, 38)$, and $C^2 = (0, 31, 5, 40, 23)$. One can check that
  $\cF'=\{C^0, C^1, C^2\}$ is a $(45, 15, \sC_5)$-DF.

  Finally, we take $u=3$, and for every $j \in [0, 2]$ we let 
  $C^j=(c^j_0, c^j_1, \ldots, c^j_{5u-1})$ be the $5u$-cycle 
  defined as follows:
  \[
    c^j_i = 
    \begin{cases}
        c_i            & \text{if $i = 0, 2$},\\
        c_i + 15j      & \text{if $i = 1, 3$},\\
        c_i + 30j      & \text{if $i = 4$},   \\
      c^j_{i-5} + 15  & \text{if $i \in [5, 9]$},\\    
      c^j_{i-10}+ 30  & \text{if $i \in [10, 14]$}.                      
    \end{cases}
  \]
  We then have 
  \begin{align*}
    & C^0 = (0,  1, 5, 10,  8,    15, 16, 20, 25, 23,   30, 31, 35, 40, 38), \\
    & C^1 = (0, 16, 5, 25, 38,    15, 31, 20, 40,  8,   30,  1, 35, 10, 23), \\
    & C^2 = (0, 31, 5, 40, 23,    15,  1, 20, 10, 38,   30, 16, 35, 25,  8),   
  \end{align*}     
  and the set $\cF''=\{C^j + h\mid h\in [0,14]\}$ is a   
  cyclic $15$-cycle system of $K_3[15]$.
\end{ex}

\section{Cycles of even length}\label{even}
In this section we construct cyclic $\ell$-cycle systems of $K_m[n]$
when $\ell$ is an even divisor of $(m-1)n/2$. By Proposition
\ref{DFandCS}, it is enough to provide suitable difference families. 
We will build these difference families by making use of Lemma \ref{lemma1}, which can be thought of as a generalization of Lemma 5.3 in \cite{MT}, proved using alternating sums.

\begin{defini}
If $D=\{d_1, d_2, \ldots, d_{2k}\}$ is a set of positive integers, with $d_{i}< d_{i+1}$ 
for $i\in[1,2k-1]$,
the {\em alternating difference pattern} of $D$ is the sequence $(s_1, s_2, \ldots, s_k)$ where
$s_i=d_{2i}-d_{2i-1}$ for every $i\in[1,k]$. Furthermore, $D$ is said to be {\em balanced} if there exists an integer
$\tau\in[1,k]$ such that
$\sum_{i=1}^{\tau} s_i = \sum_{i=\tau+1}^k s_i$.
\end{defini}

\begin{lemma}\label{lemma1}
If $D$ is a balanced set of $2k$ positive integers, then there exists a 
$2k$-cycle $C$ such that $\Delta C=\pm D$ and 
$V(C)\subset [-d, d']$, where $d=\max D$ and $d'=\max (D\setminus\{d\})$.
\end{lemma}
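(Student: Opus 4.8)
The plan is to build $C$ directly by the alternating-sum method that generalizes Lemma~5.3 of \cite{MT}, realizing the vertices of $C$ as alternating partial sums of a suitable reordering of $D$. Write the elements of $D$ in increasing order as $d_1 < d_2 < \cdots < d_{2k}$, so that the alternating difference pattern is $(s_1,\ldots,s_k)$ with $s_i = d_{2i}-d_{2i-1}$, and use the balanced hypothesis to fix $\tau\in[1,k]$ with $\sum_{i=1}^{\tau}s_i=\sum_{i=\tau+1}^{k}s_i$; note $\tau\le k-1$, since every $s_i$ is positive. I would then choose an ordering $b_1,b_2,\ldots,b_{2k}$ of the elements of $D$, set $c_0=0$ and $c_j=\sum_{i=1}^{j}\epsilon_i b_i$ for $1\le j\le 2k-1$ with signs $\epsilon_i$ alternating in $i$ (the initial sign chosen at our convenience), and take $C=(c_0,c_1,\ldots,c_{2k-1})$. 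With this definition the consecutive differences $c_j-c_{j-1}$ are exactly $\pm b_1,\ldots,\pm b_{2k-1}$, while the closing difference $c_0-c_{2k-1}$ has magnitude $|c_{2k-1}|$; hence $\Delta C=\pm D$ precisely when the full alternating sum $\pm(b_1-b_2+\cdots-b_{2k})$ vanishes, which makes $C$ close up into a $2k$-cycle.

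The first two tasks are to arrange this closure and to control the range. For closure, grouping the ordered list into consecutive blocks $(b_{2i-1},b_{2i})$ turns the alternating sum into $\sum_i(b_{2i-1}-b_{2i})$, so it suffices to keep the two members of each pair $\{d_{2i-1},d_{2i}\}$ in a single block and orient the block so that pair $i$ contributes $+s_i$ for $i\le\tau$ and $-s_i$ for $i>\tau$; the balanced identity then forces the total to be $0$. For the range, the alternating partial sums $c_j$ split into two interleaved families, the local maxima and the local minima of the resulting zigzag, and I would bound these separately using the monotonicity $d_1<\cdots<d_{2k}$: telescoping the partial sums against the gaps $s_i$ and invoking the splitting at $\tau$ should show that the maxima stay below $d'=d_{2k-1}$ and the minima stay above $-d=-d_{2k}$, giving $V(C)\subset[-d,d']$. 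I expect this part to be routine.

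The hard part will be guaranteeing that $C$ is a genuine cycle, i.e.\ that the $2k$ partial sums $c_0,\ldots,c_{2k-1}$ are pairwise distinct. Adjacent sums differ by $\pm b_{j+1}\neq 0$ and so cannot coincide, but two non-adjacent partial sums can: equality $c_j=c_{j'}$ means some proper sub-arc has alternating sum zero, and a careless ordering of the blocks does produce such coincidences even when the closure and range conditions hold. Consequently the blocks cannot simply be listed in increasing order of their index; they must be interleaved so that no partial alternating sum repeats, and the choice of interleaving is the genuine design problem. I would therefore devote the bulk of the argument to specifying the interleaving explicitly and proving distinctness from the strict inequalities $d_{2i-1}<d_{2i}<d_{2i+1}$ together with the gap identities coming from balance. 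This distinctness verification, rather than the closure or the range bound, is where I expect the real work to lie.
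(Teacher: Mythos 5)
Your general strategy is the paper's: realize the vertices as alternating partial sums of a reordering of $D$, use the balance identity to make the closing difference land on the one remaining element of $D$, and get the range bound by telescoping. But there is a genuine gap at exactly the point you flag as "where the real work lies": the distinctness of the partial sums. The concrete reordering you propose --- keeping the consecutive pairs $\{d_{2i-1},d_{2i}\}$ as blocks in index order and orienting block $i$ so that it contributes $+s_i$ for $i\le\tau$ and $-s_i$ for $i>\tau$ --- genuinely fails: with alternating difference pattern $(1,1,1,1)$ and $\tau=2$ the even-indexed partial sums are $0,1,2,1$, so $c_2=c_6$ and the closed trail is not a cycle. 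You acknowledge that some interleaving of the blocks is needed, but you never specify it or prove it works, so the proof is incomplete where it matters most.

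The paper sidesteps the interleaving problem entirely with a different (and simpler) reordering: it leaves $d_1,\dots,d_{2\tau}$ and $d_{2\tau+2},\dots,d_{2k}$ in increasing order as $\delta_1,\dots,\delta_{2k-1}$ and moves the single element $d_{2\tau+1}$ to the last position $\delta_{2k}$. Because $\delta_1<\delta_2<\cdots<\delta_{2k-1}$, the even-indexed partial sums $c_{2j}=\sum_{h=1}^{j}(\delta_{2h}-\delta_{2h-1})$ are nonnegative and strictly increasing in $j$, while the odd-indexed ones $c_{2j+1}=-\delta_1+\sum_{h=1}^{j}(\delta_{2h}-\delta_{2h+1})$ are negative and strictly decreasing, so all $2k$ vertices are distinct with no case analysis; the same inequalities give $V(C)\subset[-\delta_{2k-1},\delta_{2k-2}]=[-d,d']$, and the balance identity shows $c_{2k-1}-c_0=-\delta_{2k}=-d_{2\tau+1}$, so the closing edge supplies the displaced difference. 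If you want to complete your argument, the cleanest fix is to abandon the pair-block interleaving and adopt this single-element relocation, whose whole point is that strict monotonicity of the first $2k-1$ terms makes distinctness automatic.
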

\begin{proof} 
Let $D=\{d_1, d_2, \ldots, d_{2k}\}$ with $d_i< d_{i+1}$ for $i\in [1, 2k-1]$.
Since $D$ is balanced, 
there is $\tau\in[1,k]$ such that 
$\sigma =\sum_{i=1}^{2\tau} (-1)^i d_i - \sum_{i=2\tau+1}^{2k} (-1)^i d_i = 0$.
Let $\delta_1, \delta_2, \ldots, \delta_{2k}$ be the sequence obtained by reordering the integers in $D$ as follows:
  \[\delta_i = 
    \begin{cases}
      d_i     & \text{if $i\in[1,2\tau]$},\\
      d_{i+1} & \text{if $i\in[2\tau+1, 2k-1]$},\\
      d_{2t+1}  & \text{if $i=2\tau$}.\\            
    \end{cases}  
  \]
Set $c_0=0$ and $c_i=\sum_{h=1}^{i} (-1)^h \delta_{h}$ for $i\in[1, 2k-1]$.
Since $0<\delta_1<\delta_2<\cdots<~\delta_{2k-1}$, we have that $c_i\neq c_j$ whenever $i\neq j$. Also,
the following inequalities hold:
\begin{align*}
  0 &\leq \sum_{h=1}^j (\delta_{2h}-\delta_{2h-1}) = \sum_{h=1}^{2j} (-1)^h \delta_{h}  
    = c_{2j} = -\delta_1 + \sum_{h=1}^{j-1} (\delta_{2h}-\delta_{2h+1}) + \delta_{2j} \leq \delta_{2j}
\end{align*}
  for every $j\in [1, k-1]$, and
\begin{align*}
  -\delta_{2j+1} &\leq \sum_{h=1}^j (\delta_{2h}-\delta_{2h-1}) -\delta_{2j+1} = \sum_{h=1}^{2j+1} (-1)^h \delta_{h}  \\
    &= c_{2j+1} = -\delta_1 + \sum_{h=1}^{j} (\delta_{2h}-\delta_{2h+1})  \leq -\delta_{1}
\end{align*}
  for every $j\in [0, k-1]$. Therefore, every $c_i$ belongs to $[-\delta_{2k-1},\delta_{2k-2}]$
  where  $\delta_{2k-1}=\max D$ and $\delta_{2k-2}=\max (D\setminus\{\delta_{2k-1}\})$.

  To prove that $C=(c_0, c_1, \ldots, c_{2k-1})$ is the desired $2k$-cycle, it is left to show that 
  $\Delta C = \pm D$. Note that
  \begin{align*}
    c_{2k-1}-c_0 & = c_{2k-1} = \sum_{h=1}^{2k-1} (-1)^h \delta_{h} =
      \sum_{h=1}^{2\tau} (-1)^h d_{h} + 
      \sum_{h=2\tau +1}^{2k-1} (-1)^{h} d_{h+1} \\
      & = \sum_{h=1}^{2\tau} (-1)^h d_{h} - 
        \sum_{h=2\tau+2}^{2k} (-1)^{h} d_{h} = \sigma - d_{2\tau+1} = \sigma  - \delta_{2k}.
  \end{align*}
  By recalling that $\sigma=0$, we have that $c_{2k-1}-c_0 = - \delta_{2k}$. 
  Finally, $c_i=c_{i-1}+ (-1)^i\delta_i$ for every $i\in[1,2k-1]$, therefore
  $\Delta C= \pm\{\delta_1, \delta_2, \ldots, \delta_{2k}\} = \pm D$, and this completes the proof.
\end{proof}

\begin{rem}
We note that Lemma~\ref{lemma1} constructs the cycle $C$ with vertices in $\mathbb{Z}$.  In practice, we will use this lemma to construct cycles in $K_m[n]$ with vertices in $\mathbb{Z}_{mn}$; the condition $V(C) \subset[-d,d']$ ensures that $C$ is a cycle provided $mn > d + d'$.
\end{rem}

\begin{ex}
Take $k=6$ and $D=\{1,3,5,7,8,9,10,12,14,15,17,19\}$.
Since the alternating difference pattern of $D$ is $(2,2,1,2,1,2)$, $D$~is clearly balanced.
%
%

Following the notation of Lemma \ref{lemma1}, we have $(\delta_1, \delta_2, \ldots, \delta_{12})=(1,3,5,7,8,9,$ $12,14,15,17,19,10)$, and  the $12$-cycle $C=(0, c_1, \ldots, c_{2k-1})$, built using this sequence, where $c_i=\sum_{h=1}^{i} (-1)^h \delta_{h}$ for 
$i\in[1,11]$ is the following
\[C=(0,-1,2,-3,4,-4,5,-7,7,-8,9,-10).\]
Note that $V(C)\subseteq [-19, 17]$ and  $\Delta C=\pm D$.
\end{ex}

%

\subsection{$\ell\equiv 0 \pmod{4}$}

We first consider the case in which the cycle length $\ell$ is a multiple of 4 and $2\ell \mid (m-1)n$,  hence the nonexistence conditions of Corollary~\ref{nonexistence corollary} are never realized.  
Indeed, we can use Lemma \ref{lemma1} to build an $(mn, n, \sC_\ell)$-DF, thus proving that in this case 
we always have a cyclic $\ell$-cycle system for $\Kmn$.

\begin{thm}\label{4lambda}
If $4\mid \ell$ and $2\ell \mid n(m-1)$, then there exists an $(mn, n, \sC_\ell)$-DF, and hence there exists a cyclic 
$\ell$-cycle system of $\Kmn$.
\end{thm}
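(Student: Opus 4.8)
The plan is to construct an explicit $(mn, n, \sC_\ell)$-DF whenever $4 \mid \ell$ and $2\ell \mid (m-1)n$, and then invoke Proposition~\ref{DFandCS} to conclude the existence of the cyclic cycle system. By the divisibility hypothesis, the required number of base cycles is $t = (m-1)n/(2\ell)$, and their lists of differences must partition $\Z_{mn}\setminus m\Z_{mn} = \pm([1,mn-1]\setminus m\Z_{mn})$, i.e.\ the full set of nonzero, non-multiple-of-$m$ differences. The natural strategy is to assign to each base cycle a block of $2\ell$ consecutive (or nearly consecutive) positive integers, avoiding the multiples of $m$, so that the $t$ blocks tile the available positive differences; each block then furnishes the $\pm D$ for one cycle via Lemma~\ref{lemma1}.

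First I would set up the partition of positive differences. The positive elements of $\Z_{mn}\setminus m\Z_{mn}$ are $[1, \lfloor mn/2\rfloor]$ minus the multiples of $m$ (with appropriate care at the top of the range, since $mn/2 \in m\Z_{mn}$ iff $n$ is even). I would carve this set into $t$ consecutive blocks $D_1, \ldots, D_t$, each containing exactly $2\ell$ non-multiples of $m$. Because $4 \mid \ell$, each block has size divisible by $4$, which gives considerable freedom. For each block $D_r$ I must verify that $D_r$ is \emph{balanced} in the sense of the definition preceding Lemma~\ref{lemma1}: its alternating difference pattern $(s_1, \ldots, s_\ell)$ (note $|D_r| = 2\ell$, so $k = \ell$ here) must split into two halves with equal sum. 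The gaps $s_i$ are all equal to $1$ except where a multiple of $m$ has been skipped (where a gap of $2$ appears); since $4 \mid \ell$ and the skipped multiples are regularly spaced, one can choose the block boundaries so that the skips distribute symmetrically, making $\sum_{i=1}^{\ell/2} s_i = \sum_{i=\ell/2+1}^{\ell} s_i$ and hence $D_r$ balanced with $\tau = \ell/2$. Applying Lemma~\ref{lemma1} to each balanced $D_r$ yields a $2k = \ell$-cycle $C_r$ with $\Delta C_r = \pm D_r$ and vertices confined to $[-\max D_r, \max(D_r\setminus\{\max D_r\})]$.

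The final step is to check that the cycles $C_r$ genuinely live in $\Kmn$. Lemma~\ref{lemma1} produces vertices in $\Z$; by the Remark following it, $C_r$ is a valid cycle in $\Z_{mn}$ provided $mn > \max D_r + \max(D_r\setminus\{\max D_r\})$. Since all differences are at most roughly $mn/2$, the vertex set lies in an interval of length less than $mn$, so reducing modulo $mn$ keeps the vertices distinct and the cycle well-defined. Collecting $\cF = \{C_1, \ldots, C_t\}$ gives $\Delta\cF = \Z_{mn}\setminus m\Z_{mn}$, so $\cF$ is the desired DF, and Proposition~\ref{DFandCS} finishes the argument.

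I expect the main obstacle to be the bookkeeping at the boundary of the difference range, particularly the parity split between $n$ even and $n$ odd. When $n$ is even, $mn/2$ is itself a multiple of $m$ (as $m \mid mn/2$ requires $n/2$ integral), so the largest difference is the self-paired element $mn/2$ and must be excluded from the blocks, whereas when $n$ is odd the top difference behaves differently; getting the count of non-multiples of $m$ in $[1,\lfloor mn/2\rfloor]$ exactly right, and ensuring the skipped multiples of $m$ fall so that every block stays balanced, is the delicate combinatorial core. Everything else—constructing each cycle and verifying the vertex bound—follows mechanically from Lemma~\ref{lemma1} and its Remark once the blocks are correctly balanced.
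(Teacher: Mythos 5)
Your overall strategy is the same as the paper's: partition the positive differences $[1,\lfloor mn/2\rfloor]$ minus the multiples of $m$ into balanced sets, apply Lemma~\ref{lemma1} to each, and finish with Proposition~\ref{DFandCS}. Before the main point, note a factor-of-two slip: an $\ell$-cycle has $2\ell$ differences, but these come in $\pm$ pairs, so the set of \emph{positive} integers fed to Lemma~\ref{lemma1} for each base cycle must have size $\ell$, not $2\ell$ (in that lemma's notation $2k=\ell$, the alternating difference pattern has $\ell/2$ entries, and the all-ones pattern is balanced with $\tau=\ell/4$ precisely because $4\mid\ell$). As written, your $t=(m-1)n/(2\ell)$ blocks of size $2\ell$ would require $(m-1)n$ positive differences, twice as many as exist.

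The substantive gap is the case $m$ even. When $m$ is odd each maximal run of non-multiples of $m$ has even length $m-1$, so consecutive pairing gives all gaps equal to $1$ and every consecutive block of $\ell$ elements is balanced; this is exactly the paper's Case 1 and your argument is fine there. When $m$ is even the runs have odd length, the pairing straddles every other multiple of $m$, and a gap of $2$ appears at each straddle. A block containing an \emph{odd} number of such gap-$2$ entries has alternating difference pattern summing to $\ell/2$ plus an odd number, i.e.\ to an odd total, which can never be split into two equal halves: the block is not balanced for any $\tau$. This actually occurs: for $(m,n,\ell)=(4,16,8)$ the first consecutive block is $\{1,2,3,5,6,7,9,10\}$ with pattern $(1,2,1,1)$ and odd sum $5$. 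So your claim that ``one can choose the block boundaries so that the skips distribute symmetrically'' is carrying real weight and is unproven; it is not clear consecutive (or nearly consecutive) blocks can always be arranged so that each contains an even number of straddles with the $2$'s positioned to admit a valid $\tau$. The paper handles $m$ even (where necessarily $n\equiv 0\pmod 8$) by a different device: it applies the involution $\iota(x)=4tm-x$ to the lower half of the difference range and takes each difference set to be $A_i\cup\iota(A_i)$ with $|A_i|=\ell/2$, whose alternating difference pattern is a palindrome of length $\ell/2$ and hence balanced automatically, whatever the gaps are. You need this (or an equivalent symmetrization) to close the even case.
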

\begin{proof} Set $D=[1, \lfloor nm/2\rfloor]\setminus ([1,n]\cdot m)$ and note that
$\pm D = \Z_{mn} \setminus m\Z_{mn}$. 
To build an $(mn, n, \sC_\ell)$-DF, it is enough to 
show that $D$ can be partitioned into a family of balanced $\ell$-sets, and apply
Lemma \ref{lemma1}.   
The existence of a cyclic $\ell$-cycle system of $\Kmn$ then follows  from  Proposition \ref{DFandCS}.

\textbf{Case 1:} $m$ is odd. We recall that by assumption $|D| = (m-1)n/2$ is a multiple of $\ell$, 
hence $(m-1)n/2 = q\ell$ for some $q>0$.
Now, let $D=\{d_1, d_2, \ldots, d_{q\ell}\}$ with $d_i<d_{i+1}$. 
Since $m$ is odd, one can check that $d_{2i}-d_{2i-1}=1$ for every $i\in[1,q\ell/2]$.
Therefore, we can partition $D$ into the subsets 
$D_j=\{d_{\ell j+1}, d_{\ell j+2}, \ldots, d_{\ell (j+1)}\}$ whose alternating difference pattern
is $(1,1,\ldots, 1)$ for every $j\in[0,q-1]$. Since $\ell\equiv 0 \pmod{4}$, every $D_j$ is clearly balanced. 

\textbf{Case 2:} $m$ is even. In this case, $n\equiv 0 \pmod{8}$. 
Let $\ell=4\lambda$,
$n=8t$ for some $t>0$, and let $\iota$ be the involutory permutation of the set $D$ defined by 
$\iota(x)=4tm -x$ for every $x\in D$. 
We notice that 
if $X$ is a subset of $[1,2tm-1]$ with
size $2\lambda$ and 
alternating difference pattern is
$(s_1, s_2, \ldots, s_{\lambda})$, then the set $\overline{X} = X\ \cup \ \iota(X)$ has size 
$4\lambda$ and its alternating difference pattern is 
$(s_1, s_2, \ldots, s_{\lambda}, s_{\lambda}, \ldots, s_2, s_1)$; hence 
$X$ is clearly balanced.

Now, let $A = [1, 2t m-1]\setminus ([1,2t-1]\cdot m)$. 
Recall that by assumption $2\ell \mid n(m-1)$, hence $2\lambda \mid |A|$. 
Let $\{A_1, A_2, \ldots, A_q\}$ be a partition of $A$ into sets of size $2\lambda$ and set $\overline{A_i} = A_i \ \cup\ \iota(A_i)$. As shown above, 
each $\overline{A_i}$ is balanced. Considering that $\{A, \iota(A)\}$ is a partition of $D$, it follows that the $\overline{A}_i$s partition between them $D$ and this completes the proof.

\end{proof}

\begin{ex}
Let $\ell=12$, $m=4$ and $n=16$. Following the notation of the proof of Theorem \ref{4lambda} we have 
$D=[1,32]\setminus ([1,8]\cdot 4)$, and
$A=[1,15]\setminus\{4,8,12\}$.
Setting for instance $A_1=[1,7]\setminus\{4\}$ and $A_2=[9,15]\setminus\{12\}$, we partition $D$ into the two sets $\overline{A_i} = A_i \cup \iota(A_i)$ for $i=1,2$, where
$\iota(A_1) = [25,31]\setminus\{28\}$ and $\iota(A_2)=[17,23]\setminus\{20\}$.
By applying Lemma \ref{lemma1} we build the two cycles 
\begin{align*}
  C_1 &=(0, 1, -1, 2, -3, 3, -4, 22, -5, 24, -6, 25), \\
  C_2 &=(0, 9, -1, 10, -3, 11, -4, 14, -5, 16, -6, 17, 0),
\end{align*}   
such that $\Delta C_i = \pm\overline{A_i}$ for $i=1,2$. Therefore
$\{C_1, C_2\}$ is a $(64, 16, \sC_{12})$-DF.
\end{ex}

\subsection{$\ell\equiv 2 \pmod{4}$}

Let us now consider the case $\ell \equiv 2$ (mod 4).  
We will show that for any such $\ell$, there is a cyclic $\ell$-cycle decompostion of $\Kmn$ whenever the conditions of Theorem~\ref{MainTheorem} hold. 
Our general approach in this case 
is as follows.  Let $\lambda_m=\gcd(m-1,\ell)$ and let $n_0$ be the smallest value for which the triple $(mn_0, n_0, \ell)$ is admissible.  If $\lambda_m \geq 3$, 
we build an $(mn_0, n_0, \sC_{\lambda})$-DF where $\lambda=\lambda_m$ 
(Theorem~\ref{complete DF})
or $2\lambda_m$ 
(Lemma~\ref{Lemma:l=2mod4:2}), and if $\lambda_m \leq 2$, we find an
$(mn_0, n_0, \sC_{\ell})$-DF 
(Lemma~\ref{Lemma:l=2mod4:3}).  
We then obtain a cyclic $C_{\ell}$-decomposition of 
$\Kmn$ by applying Theorem \ref{cc}.

We start by recalling that Theorem \ref{complete DF}  
guarantees the existence of an $(m, 1, \sC_{\ell})$-DF whenever $m\equiv 1 \pmod{2\ell}$
 and $\ell\equiv 2 \pmod{4}$.

We now prove two lemmas which we will need to prove the general existence result.

\begin{lemma}\label{Lemma:l=2mod4:2} There exists a $(4m, 4, \sC_{\ell})$-DF whenever $6\leq \ell \equiv 2 \pmod{4}$ and $\ell\mid 2(m-1)$.
\end{lemma}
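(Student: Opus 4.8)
The plan is to invoke Proposition~\ref{DFandCS}: it suffices to exhibit a $(4m,4,\sC_\ell)$-DF, that is, a family of $\ell$-cycles with vertices in $\Z_{4m}$ whose difference lists partition $\Z_{4m}\setminus m\Z_{4m}$. Writing $\ell=2k$ with $k\geq 3$ odd, the differences to be covered are exactly $\pm D$ with $D=[1,2m-1]\setminus\{m\}$: the residue $2m$ lies in $m\Z_{4m}$ and is excluded, while every other pair $\{d,4m-d\}$ with $d\notin m\Z_{4m}$ has a unique representative in $[1,2m-1]\setminus\{m\}$. Since $|D|=2(m-1)=2kq$ with $q=(m-1)/k$ an integer (because $\ell\mid 2(m-1)$), the target is to split $D$ into $q$ balanced $\ell$-sets and apply Lemma~\ref{lemma1} to each. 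The cycles so produced have vertices in a window $[-d,d']$ with $d\leq 2m-1$ and $d'\leq 2m-2$, hence of span at most $4m-3<4m$, so by the Remark following Lemma~\ref{lemma1} each is a genuine cycle of $\Z_{4m}$ and validity in the group is automatic.

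First I would observe that the naive choice of $q$ consecutive blocks fails: a block of $2k$ consecutive integers has alternating difference pattern $(1,1,\dots,1)$ of odd length $k$, whose total is odd, so it admits no balance point. The remedy is to use \emph{asymmetric} balanced patterns, exploiting that the pattern constrains only the within-pair gaps while the between-pair gaps are free. The pattern $(k-1,1,1,\dots,1)$ is balanced, since its first entry equals the sum of the remaining $k-1$ ones and so $\tau=1$ works; this is precisely the shape ($k=3$, gap $2$) of the two base cycles of the $(28,4,\sC_6)$-DF recorded in the Example preceding Lemma~\ref{lemma1}, whose difference sets $\{1,3,5,6,8,9\}$ and $\{2,4,10,11,12,13\}$ both pair up as $(\text{gap }2)(\text{gap }1)(\text{gap }1)$.

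Concretely, I would arrange that each block, once sorted, consists of a single \emph{long pair} $\{a,a+(k-1)\}$ at the bottom together with $k-1$ \emph{consecutive pairs} $\{b,b+1\}$ lying above it, so that its pattern is $(k-1,1,\dots,1)$; the gaps between successive pairs are irrelevant to the pattern and may be chosen to suit the tiling. Thus I would reserve $q$ long pairs of gap $k-1$ among the smallest differences, split the remaining elements of $D$ into consecutive pairs, and hand $k-1$ of these consecutive pairs to each block. Balancedness then follows uniformly, and Lemma~\ref{lemma1} turns each block into a base $\ell$-cycle.

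The main obstacle is to make this allocation explicit and valid for \emph{all} $m$ with $\ell\mid 2(m-1)$: I must place the $q$ long pairs and the consecutive pairs so that together they use each element of $[1,2m-1]\setminus\{m\}$ exactly once, which forces care around the forbidden value $m$ and the top end $2m-1$, and I anticipate a short case distinction (on $m$ modulo a small number, or on the parity of $q$) to keep the pairing aligned with the gap. Once the balanced partition is fixed, the span bound $d+d'\leq 4m-3<4m$ places the cycles in $\Z_{4m}$ and Proposition~\ref{DFandCS} delivers the DF; no nonexistence obstruction interferes, since $n=4\equiv 0\pmod 4$ lies outside the forbidden cases of Corollary~\ref{nonexistence corollary}. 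As a consistency check on part of the range, when $\ell\mid(m-1)$ and $m\equiv 0,1\pmod 4$ one could instead blow up a $(2m,2,\sC_\ell)$-DF (Theorem~\ref{part size 2 DF}) via Theorem~\ref{cc} with $(w,s,u)=(2,2,1)$, but this shortcut does not reach every admissible $m$, which is exactly why the direct balanced-partition construction is required.
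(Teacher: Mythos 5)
Your reduction is exactly the paper's: realize $\Z_{4m}\setminus m\Z_{4m}$ as $\pm D$ with $D=[1,2m-1]\setminus\{m\}$, partition $D$ into $q=2(m-1)/\ell$ balanced $\ell$-sets, apply Lemma~\ref{lemma1}, and use the span bound $d+d'\leq 4m-3<4m$ to get genuine cycles; your diagnosis that blocks of $\ell$ consecutive integers fail (pattern $(1,\dots,1)$ of odd length $k=\ell/2$, odd total) is also correct, as is the balancedness of $(k-1,1,\dots,1)$. The gap is in the allocation, and it is structural rather than the ``short case distinction'' you anticipate. A long pair $\{a,a+k-1\}$ shadows the $k-2$ elements of $[a+1,a+k-2]$; these must be covered by \emph{other} blocks, yet your pattern forces every block's consecutive pairs to lie strictly above the top of that block's own long pair. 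So if all $q$ long pairs sit ``among the smallest differences'', any element below the smallest long-pair top that is not itself in a long pair is orphaned. Concretely, take $\ell=10$, $m=11$, so $k=5$, $q=2$, $D=[1,21]\setminus\{11\}$: with long pairs $\{1,5\},\{2,6\}$ the elements $3,4$ can be handed to neither block; with $\{3,7\},\{4,8\}$ the pairs $\{1,2\},\{5,6\}$ are orphaned; and no choice of two gap-$4$ pairs works, since the smallest long pair $\{a_1,a_1+4\}$ leaves the $a_1+2\geq 3$ elements of $[1,a_1+3]\setminus\{a_1\}$ needing to lie in the single other long pair. (Among patterns with one $4$ and four $1$'s only $(4,1,1,1,1)$ and $(1,1,1,1,4)$ are balanced, so you cannot let stray consecutive pairs slide under the long pair.) Tiling an initial segment exactly by gap-$(k-1)$ pairs requires each residue class modulo $k-1$ to be met an even number of times, a divisibility condition on $q$ that fails in general.

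This is precisely why the paper keeps your architecture but takes the special pair to have gap $2$: the pattern $(2,1,\dots,1)$ is balanced (its partial sums hit $(k+1)/2$ at $\tau=(k-1)/2$), a gap-$2$ pair shadows only one element, the pairs $\{4j+1,4j+3\},\{4j+2,4j+4\}$ tile $[1,2q]$ exactly when $q$ is even (which happens exactly when $m$ is odd since $q=(m-1)/k$ with $k$ odd), and when $m$ is even the hole at $m$ supplies the missing gap-$2$ pair $\{m-1,m+1\}$, at the cost of one block whose pattern ends in a $3$ and must be checked separately. To salvage your version you would have to let the long pair float to the top of some blocks and redistribute, and carrying that out for all admissible $m$ essentially reproduces the gap-$2$ construction. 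Your closing remark that Theorem~\ref{part size 2 DF} combined with Theorem~\ref{cc} reaches only part of the range is correct.
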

\begin{proof}
Let $q=2(m-1)/\ell$, and note that $2q < m-1$; also let 
\[
\mathcal{A} = 
\begin{cases}
[1,2q],                    & \mbox{if $m$ is odd} \\
[1,2q-2] \cup \{m-1,m+1\}, & \mbox{if $m$ is even},
\end{cases}
\]
Since  $q\not\equiv m \pmod{2}$, there exists a partition $\{\{a_i, a_i+2\} \mid i \in [1,q]\}$
of the elements of $A$ into pairs at distance $2$, where $a_q=m-1$ if $m$ is even.
Set
\[
\mathcal{B} = 
\begin{cases}
[2q+1,m-1] \cup [m+1,2m-1],               & \mbox{if $m$ is odd}, \\
[2q-1,m-2] \cup [m+2,2m-2] \cup \{2m+1\}, & \mbox{if $m$ is even},
\end{cases}
\] 
and let $\{B_i \mid i \in [1, q]\}$ be a partition of $\mathcal{B}$ such that each $B_i$ contains $\ell-2$ elements and $\max_{b \in B_i} d < \min_{b \in B_j} d$ whenever $i<j$. Note that each $B_i$ can be partitioned into pairs of consecutive integers except when $i=q$ and $m$ is even. In this case,
$B_q$ can be partitioned into pairs of consecutive integers and a pair at distance three.
Finally, for each $i \in [1,q]$, set 
\[
D_i = \{a_i, a_i+2\} \cup B_i.
\]
Clearly, the $\ell$-sets $D_i$ between them partition $\cA \cup \cB$, and
each $D_i$ has the following alternating difference pattern: 
\[
\begin{cases}
  (2,1,\ldots,1)   & \text{if $i<q$, or $i=q$ and $m$ is odd},\\
  (2,1,\ldots,1,3) & \text{if $i=q\neq 1$ and $m$ is even},   \\
  (\underbrace{1,\ldots,1,2}_{(\ell+2)/4}, 
   \underbrace{1,\ldots,1,3}_{(\ell-2)/4}) & \text{if $i=q= 1$ and $m$ is even}.  
\end{cases}
\]
Therefore, each $D_i$ is balanced and the assertion follows from 
Lemma~\ref{lemma1}.
\end{proof}

\begin{lemma}\label{Lemma:l=2mod4:3}
  There exists an $(mn, n, \sC_{\ell})$-DF whenever $6\leq \ell \equiv 2 \pmod{4}$ 
  and at least one of the following conditions hold:
  \begin{enumerate}
    \item $m\equiv 1 \pmod{4}$ and $2n\equiv 0 \pmod{\ell}$, or
    \item $n\equiv 0\pmod{2\ell}$.
  \end{enumerate}
\end{lemma}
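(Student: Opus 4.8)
By Proposition~\ref{DFandCS} it suffices to exhibit an $(mn,n,\sC_\ell)$-DF, and, exactly as in the proofs of Theorem~\ref{4lambda} and Lemma~\ref{Lemma:l=2mod4:2}, I would do this by partitioning the positive difference set $D=[1,\lfloor mn/2\rfloor]\setminus([1,n]\cdot m)$ (which satisfies $\pm D=\Z_{mn}\setminus m\Z_{mn}$) into $\ell$-element \emph{balanced} sets and applying Lemma~\ref{lemma1} to each. Both hypotheses force $2\ell\mid(m-1)n$, hence $\ell\mid|D|=(m-1)n/2$, so the number of $\ell$-sets is an integer; one also checks that neither hypothesis produces a self-paired difference (the order-$2$ element $mn/2$, when it exists, is a multiple of $m$ and so lies outside $D$), and since every difference is smaller than $mn/2$ the cycles produced by Lemma~\ref{lemma1} have distinct vertices in $\Z_{mn}$.

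The essential new difficulty is that $\ell\equiv 2\pmod 4$ makes $k=\ell/2$ \emph{odd}, so the constant pattern $(1,\dots,1)$ used when $4\mid\ell$ is no longer balanced (its entries sum to the odd number $k$). I would instead build each $\ell$-set from an \emph{asymmetric} balanced template adapted to odd $k$. The basic one is $(1,\dots,1,2)$, with $k-1$ unit gaps and a single gap of $2$: its entries sum to $k+1$, and splitting after the first $(k+1)/2$ unit gaps balances it, which is legitimate precisely because $\tau=(k+1)/2\le k-1$ when $k\ge 3$, i.e.\ when $\ell\ge 6$. This is where the hypothesis $\ell\ge 6$ is used. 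More generally a pattern consisting of unit gaps together with an \emph{odd} number of even-length gaps, placed away from the centre, is balanced; note that a symmetric palindrome with an odd centre is \emph{not} balanced, so the reflection trick $x\mapsto mn/2-x$ of Theorem~\ref{4lambda} cannot be applied directly here.

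It remains to realize such templates as a genuine partition of $D$. The feature I would exploit is that the removed multiples of $m$ are precisely the ``holes'' of $D$, so a pair $\{jm-1,jm+1\}$ straddling a block boundary is a ready-made gap-$2$ pair. Writing $D$ as the union of the length-$(m-1)$ blocks $B_j=[jm+1,(j+1)m-1]$, I would pair most elements consecutively (unit gaps) and use these boundary straddles to supply the gap-$2$ pairs, then group $k$ pairs into each $\ell$-set so that every set receives an odd number of straddles. When straddles are too scarce---$\ell$ small relative to $m$, or $m$ odd, where consecutive pairing inside an even-sized block yields \emph{only} unit gaps---I would instead pair elements \emph{within} a block at the larger even distances $m-2i$, which gives direct control over the size and parity of the pattern entries. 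The two hypotheses then split into cases on the parities of $m$ and $n$: Condition~2 ($2\ell\mid n$) gives $n/2$ full blocks and is the cleaner case, while Condition~1 ($m\equiv1\pmod4$, $\ell\mid 2n$) additionally requires, when $n$ is odd, absorbing the terminal half-block of size $(m-1)/2$ into its neighbour before splitting.

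The main obstacle is entirely in this last bookkeeping: one must guarantee not only that enough non-unit gaps are available, but that in every $\ell$-set they occur in an \emph{admissible} configuration---an odd number of them, and never lined up exactly at the central position, which would ruin the split. I expect the $m$-odd regime to be the delicate one, because there the natural consecutive pairing produces no straddles whatsoever and all non-unit gaps must be engineered by hand through within-block pairing, with a small number of leftover elements (at the first and last blocks) needing a dedicated long pair. Verifying that these choices can be made simultaneously across the whole of $D$, uniformly in the sizes of $\ell$, $m$ and $n$ permitted by the two conditions, is the technical heart of the argument.
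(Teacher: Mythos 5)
Your overall framework (reduce to exhibiting a DF, partition the positive differences into balanced $\ell$-sets, apply Lemma~\ref{lemma1}) is the right one, and your diagnosis of the new difficulty is accurate: for $\ell\equiv 2\pmod 4$ the pattern $(1,\dots,1)$ of odd length $k=\ell/2$ is not balanced, the palindromic reflection trick of Theorem~\ref{4lambda} fails, and the template $(1,\dots,1,2)$ with $\tau=(k+1)/2$ is balanced exactly when $\ell\ge 6$. However, the proposal has a genuine gap: everything you correctly identify as ``the technical heart of the argument'' --- producing an explicit partition of the \emph{full} set $D=[1,\lfloor mn/2\rfloor]\setminus([1,n]\cdot m)$ into balanced $\ell$-sets, uniformly over all $m$, $n$ admitted by the two hypotheses, with the non-unit gaps always landing in admissible positions --- is left undone. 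You sketch the obstacles (straddling pairs, odd $n$ with a terminal half-block, the $m$-odd regime with no boundary straddles) but do not resolve them, so what you have is a plan rather than a proof.

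The missing idea that makes this tractable is the blow-up construction, Theorem~\ref{cc}: one never needs to handle general $n$ directly. Under hypothesis~1 it suffices to build an $\bigl(\tfrac{m\ell}{2},\tfrac{\ell}{2},\sC_\ell\bigr)$-DF and apply Theorem~\ref{cc} with $s=2n/\ell$; under hypothesis~2 it suffices to build a $(2\ell m,2\ell,\sC_\ell)$-DF and take $s=n/(2\ell)$. In the first reduced case the paper takes $q=(m-1)/4$ sets $D_i=\{a_i,a_i+2\}\cup B_i$, where the $B_i$ are consecutive blocks of $\ell-2$ elements of $[1,(\ell-2)m/4]\setminus m[1,(\ell-2)/4]$ (each splitting into unit-gap pairs because $m\equiv 1\pmod 4$) and the distance-$2$ pairs $\{a_i,a_i+2\}$ come from a short interval near $\ell m/4$; each $D_i$ has pattern $(1,\dots,1,2)$, which is exactly your template. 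In the second reduced case the sets $D_i=\{i+jm\mid j\in[0,\ell-2]\cup\{\ell\}\}$, $i\in[1,m-1]$, have pattern $(m,\dots,m,2m)$ and are trivially balanced. Both constructions are a few lines; all of the bookkeeping you were worried about (general $n$, odd $n$, half-blocks, scarcity of straddles) evaporates after the reduction. Without invoking Theorem~\ref{cc} or supplying the explicit partitions for arbitrary admissible $n$, your argument is not complete.
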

\begin{proof} 
We first consider the case $m\equiv 1 \pmod{4}$ and $2n\equiv 0 \pmod{\ell}$. It is enough to show that there exists an $\left(\frac{m\ell}{2}, \frac{\ell}{2}, \sC_{\ell}\right)$-DF; 
the result then follows from Theorem \ref{cc} with $s=2n/\ell$. 

We have that $q=(m-1)/4$ is the number of cycles in an 
$\left(\frac{m\ell}{2}, \frac{\ell}{2}, \sC_{\ell}\right)$-DF. Also, let 
\[
\mathcal{A} = \left\{ \begin{array}{ll} 
\left[ \frac{(\ell-2)m}{4}+1, \frac{\ell m-2}{4}\right], & \mbox{if $m \equiv 1 \pmod{8}$} \\[2ex]
\left[ \frac{(\ell-2)m}{4}+1, \frac{\ell m-6}{4}\right] \cup \left\{ \frac{\ell m + 2}{4} \right\}, & \mbox{if $m \equiv 5 \pmod{8}$}.
\end{array} \right.
\]
Note that  $\mathcal{A}$ can be partitioned into pairs $\{ \{a_i,a_i+2\} \mid i \in [1,q]\}$. 

Let $\mathcal{B} = [1,(\ell-2)m/4] \setminus m[1,(\ell-2)/4]$, and let 
$\{B_i \mid i \in [1,q]\}$ be a partition of $\mathcal{B}$ such that each $B_i$ contains $\ell-2$ elements and $\max  B_{i} < \min B_{j}$ if $i < j$.  
Since $m\equiv 1 \pmod{4}$, it follows that each $B_i$ can be partitioned into pairs of consecutive integers. 
Now, for each $i \in [1,q]$, set $D_i = \{a_i, a_i+2\} \cup B_i$. 
Clearly, $D_i$ has alternating difference pattern $(1,1,\ldots,1,2)$. Hence
each $D_i$ is balanced, and by Lemma~\ref{lemma1} there exists a set $\cF=\{C_i \mid i\in[1,q]\}$ of $\ell$-cycles with vertices in $\Z_{m\ell/2}$ such that 
$\Delta C_i = \pm D_i$. Since the sets $\pm D_i$ partition between them 
$\pm( \cA \ \cup \cB) = \Z_{m\ell/2}\setminus m\Z_{m\ell/2}$, it follows that
$\cF$ is the desired $\left(\frac{m\ell}{2}, \frac{\ell}{2}, \sC_{\ell}\right)$-DF.

Now suppose $n\equiv 0 \pmod{2\ell}$. It is enough to construct a 
$\left(2\ell m, 2\ell, \sC_{\ell}\right)$-DF and then apply Theorem \ref{cc}
with $s=n/2\ell$. For $i\in[1,m-1]$, let 
$D_i =\{i+jm\mid j\in[0, \ell-2]\ \cup\ \{\ell\}\}$.
Each $D_i$ has alternating difference pattern $(m, \ldots, m, 2m)$;  
hence
$D_i$ is clearly balanced and by Lemma~\ref{lemma1}
there exists a set $\cF=\{C_i \mid i\in[1,q]\}$ of $\ell$-cycles with vertices in 
$\Z_{2\ell m}$ such that 
$\Delta C_i = \pm D_i$. Considering that the sets $\pm D_i$ partition between them 
$\Z_{2\ell m}\setminus m\Z_{2\ell}$, we have that $\cF$ is 
a $\left(2\ell m, 2\ell, \sC_{\ell}\right)$-DF.
\end{proof}

\begin{ex} 
Let $\ell=10$, $m=13$ and $n=5$. 
Following the notation of the proof of Theorem \ref{Lemma:l=2mod4:3}, we have that $q=3$, the set $\mathcal{A} = [27,31] \cup \{33\}$ is partitioned as 
\[
\{ \{27, 29\}, \{28, 30\}, \{31, 33\}\},
\]
and the set $\cB= [1,26] \setminus \{13, 26\}$ is partitioned as follows:
\begin{eqnarray*}
B_1 &=& \{ 1,2,3,4,5,6,7,8\} \\
B_2 &=& \{9, 10, 11, 12, 14, 15, 16, 17\} \\
B_3 &=& \{18, 19, 20, 21, 22, 23, 24, 25\}.
\end{eqnarray*}
Set $D_1= B_1 \cup \{27,29\}$, $D_2= B_2 \cup \{28,30\}$ and 
$D_3 = B_3 \cup \{31,33\}$.  
The cycles of a $(65, 5, \sC_{10})$-DF are given by 
\begin{eqnarray*}
C_1 &=& (0,-1,1,-2,2,-3,3,-5,22,-7) \\
C_2 &=& (0,-9,1,-10,2,-12,3,-14,14,-16) \\
C_3 &=& (0,-18,1,-19,2,-20,3,-22,9,-24).
\end{eqnarray*}
\end{ex}

\begin{ex}
Let $\ell=6$, $m=3$ and $n=2\ell = 12$.
Following the notation of the proof of Theorem \ref{Lemma:l=2mod4:3}, we have that  $q=2$,
\[
D_1 = \{1,4,7,10,13,19\} \mbox{ and }
D_2 = \{2,5,8,11,14,20\}
\]
The cycles of a $(36, 3, \sC_6)$-DF are given by 
\[
C_1 = (0,-1,3,-4,6,-13) \mbox{ and }
C_2 = (0,-2,3,-5,6,-14).
\]
\end{ex}

We now prove the main result of this section, which gives necessary and sufficient conditions for the existence of a cyclic cycle system when $\ell \equiv 2$ (mod~4).

\begin{thm}
Let $\ell, m \geq 3$ and $n \geq 1$ be integers.  If $\ell \equiv 2 \pmod{4}$ and $2 \ell \mid n(m-1)$, then there exists a cyclic $\ell$-cycle system for $K_m[n]$, except when $m \equiv 3 \pmod{4}$ and $n \equiv 2 \pmod{4}$. 
\end{thm}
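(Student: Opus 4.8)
The plan is to feed the three difference-family constructions already established---Theorem~\ref{complete DF}, Lemma~\ref{Lemma:l=2mod4:2}, and Lemma~\ref{Lemma:l=2mod4:3}---into the blow-up of Theorem~\ref{cc}, organising the argument by the value of $\lambda_m:=\gcd(m-1,\ell)$ and the residue of $m$ modulo $4$. The nonexistence direction is immediate: when $m\equiv 3\pmod 4$ and $n\equiv 2\pmod 4$, since $\ell\equiv 2\pmod 4$, part~(2) of Corollary~\ref{nonexistence corollary} rules out any cyclic $\ell$-cycle system. So the content is the construction of such a system in every remaining case.

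The common scheme I would use is to build a base $(mw,w,\sC_\lambda)$-DF with a small layer count $w$ and a short cycle length $\lambda$ dividing $\ell$, and then apply Theorem~\ref{cc} with that $w$, with $s=n/w$ and $u=\ell/\lambda$, to obtain a cyclic $\lambda u=\ell$-cycle system of $K_m[ws]=\Kmn$. The two blow-up hypotheses to verify are $u\mid s$ and that $\lambda(s-1)$ is even; the latter will be automatic, since $\lambda$ turns out to be even in every case. Before splitting into cases I would record the $2$-adic arithmetic forced by $2\ell\mid(m-1)n$: if $m$ is even then $|m-1|_2=0$ forces $4\mid n$, while if $m\equiv 3\pmod 4$ then $|m-1|_2=1$ forces $n$ even, and since we are outside the excluded case $n\equiv 2\pmod 4$ we again obtain $4\mid n$. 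Hence $4\mid n$ whenever $m\not\equiv 1\pmod 4$, which is exactly what will make the chosen layer counts $w$ divide $n$.

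For $\lambda_m\ge 3$ the choice of base DF is dictated by $m\bmod 4$. When $m\equiv 1\pmod 4$ one has $|m-1|_2\ge 2>|\lambda_m|_2$, so $2\lambda_m\mid m-1$ and Theorem~\ref{complete DF} yields an $(m,1,\sC_{\lambda_m})$-DF; I take $w=1$, $s=n$, $u=\ell/\lambda_m$, and here $\lambda_m$ is even. When $m$ is even, $\lambda_m$ is odd and at least $3$, so $2\lambda_m\equiv 2\pmod 4$ and Lemma~\ref{Lemma:l=2mod4:2} yields a $(4m,4,\sC_{2\lambda_m})$-DF; I take $w=4$, $s=n/4$, $u=\ell/(2\lambda_m)$. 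When $m\equiv 3\pmod 4$ one has $\lambda_m\equiv 2\pmod 4$ and $\lambda_m\ge 6$, so Lemma~\ref{Lemma:l=2mod4:2} applied with cycle length $\lambda_m$ itself yields a $(4m,4,\sC_{\lambda_m})$-DF; I take $w=4$, $s=n/4$, $u=\ell/\lambda_m$. In each subcase $\lambda$ is even, and $u\mid s$ follows because the minimal layer count $2\ell/\gcd(m-1,2\ell)$ divides $n$, supplemented by $4\mid n$ in the $m\equiv 3$ subcase.

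For $\lambda_m\le 2$ I would instead build the base DF directly with $\lambda=\ell$ (so $u=1$) via Lemma~\ref{Lemma:l=2mod4:3}: take $w=\ell/2$ when $\lambda_m=2$ and $m\equiv 1\pmod 4$ (using condition~(1), as then $2w=\ell$), and $w=2\ell$ otherwise (using condition~(2)), namely when $\lambda_m=1$ or when $\lambda_m=2$ and $m\equiv 3\pmod 4$; a final blow-up with $u=1$ and $s=n/w$ completes the argument. The main obstacle, as throughout, is the bookkeeping for $m\equiv 3\pmod 4$, which sits one step from nonexistence: admissibility forbids any layer count $w\equiv 2\pmod 4$, so the naive minimal choice is illegal and must be enlarged, and one must exploit the forced divisibility $4\mid n$ both to legitimise the larger $w$ and to confirm $w\mid n$ and $u\mid n/w$. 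Verifying these divisibilities uniformly across all subcases---rather than any single difference-family construction---is where the real work lies.
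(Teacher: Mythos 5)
Your proposal is correct and follows essentially the same route as the paper: nonexistence via Corollary~\ref{nonexistence corollary}, then a case split on $\lambda_m=\gcd(m-1,\ell)$ and $m\bmod 4$, feeding Theorem~\ref{complete DF}, Lemma~\ref{Lemma:l=2mod4:2} (with cycle length $\lambda_m$ or $2\lambda_m$ exactly as the paper chooses) and Lemma~\ref{Lemma:l=2mod4:3} into the blow-up of Theorem~\ref{cc}. The only difference is that you spell out the divisibility checks ($4\mid n$, $u\mid s$, $2\ell\mid n$, etc.) somewhat more explicitly than the paper does, which is harmless and arguably a slight improvement.
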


\begin{proof} When $m \equiv 3 \pmod{4}$ and $n \equiv 2 \pmod{4}$, 
the non-existence of a cyclic $\ell$-cycle system for $K_m[n]$ 
follows from Corollary~\ref{nonexistence corollary}. 

We now show sufficiency. Let $6\leq \ell \equiv 2 \pmod{4}$ such that 
$2 \ell \mid n(m-1)$, and assume that $n \not\equiv 2 \pmod{4}$ when $m \equiv 3 \pmod{4}$. Set $\lambda_m=\gcd(\ell, m-1)$ and note that
$m$ and $\lambda_m$ have different parities,  and $\lambda_m \equiv 2 \pmod{4}$ when 
$m$ is odd.

If $\lambda_m \geq 3$ and $m \equiv 1 \pmod{4}$, then $m \equiv 1 \pmod{2\lambda_m}$.
By Theorem~\ref{complete DF}, there exists an $(m,1,\sC_{\lambda_m})$-DF.
The result then follows by Theorem~\ref{cc}, taking $u=\ell/\lambda_m$ and $s=n$. 
If $\lambda_m \geq 3$ and $m \not\equiv 1 \pmod{4}$, then $4 \mid n$.  
Setting $\lambda = \lambda_m$ if $m \equiv 3 \pmod{4}$ and $\lambda = 2\lambda_m$ otherwise, by Lemma~\ref{Lemma:l=2mod4:2} there exists a 
$(4m,4,\sC_{\lambda})$-DF.
The result then follows by Theorem~\ref{cc}, taking $u=\ell/\lambda$ and $s=n/4$.

Finally, we assume that $\lambda_m \leq 2$. 
If $m \equiv 1 \pmod{4}$, then $\lambda_m=2$, hence $\ell/2$ is a divisor of $n$, 
that is, $2n\equiv 0 \pmod{\ell}$.
If $m \not\equiv 1 \pmod{4}$, then $n\equiv 0 \pmod{2\ell}$.
This is clear when
$\lambda_m=1$. If $\lambda_m=2$, then $m \equiv 3 \pmod{4}$, and by assumption 
$n \not\equiv 2 \pmod{4}$. Recalling that 
$2\ell \mid n(m-1)$, we have that $2\ell \mid n$. The result then follows from
Lemma~\ref{Lemma:l=2mod4:3} and Proposition \ref{DFandCS}.
\end{proof}

\section{Cycles of odd length}\label{odd}
In this section we deal with the existence of $\ell$-cycle systems of $K_m[n]$ when $\ell$ is odd and $2\ell|(m-1)n$; the main result is the following theorem.

%
\begin{thm} \label{odd cycle main}
Let $\ell, m \geq 3$ and $n \geq 1$ be integers.  If $\ell$ is odd and $2 \ell \mid n(m-1)$, then there exists a cyclic $\ell$-cycle system for $K_m[n]$, except when  $m \equiv 2, 3 \pmod{4}$ and $n \equiv 2 \pmod{4}$. 
\end{thm}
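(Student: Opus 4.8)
The plan is to establish the necessity of the exceptions first, then prove sufficiency by reducing to the difference-family constructions already available for odd cycle lengths. For necessity, when $m \equiv 2,3 \pmod 4$ and $n \equiv 2 \pmod 4$, Corollary~\ref{nonexistence corollary} (part 1) immediately rules out a cyclic $\ell$-cycle system, so no cycle system can exist in these cases. The remaining work is entirely the sufficiency direction, and here the strategy mirrors the $\ell \equiv 2 \pmod 4$ case treated earlier: set $\lambda_m = \gcd(m-1,\ell)$, identify the smallest admissible part size $n_0$, build a difference family with short cycles over a small group, and then inflate using the Blow-up Construction (Theorem~\ref{cc}) to reach the desired parameters $(mn, n, \sC_\ell)$. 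Since $\ell$ is odd, the parity condition $\ell(s-1)$ even in Theorem~\ref{cc} forces $s$ to be odd, which constrains how the blow-up can be applied and must be tracked carefully throughout.

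First I would split into the two natural regimes governed by how $\ell$ divides $(m-1)n$, namely $\ell \mid m-1$ (handled in Section~\ref{odd1}) and $\ell \mid n$ (handled in Section~\ref{odd2}), since these are the two ways the divisibility $2\ell \mid (m-1)n$ with $\ell$ odd can be realized after extracting the factor of $2$. In the first regime, when $\lambda_m = \ell$ one can lean on Theorem~\ref{complete DF} to obtain an $(m,1,\sC_\ell)$-DF (valid once $2\ell \mid m-1$), or on Theorem~\ref{part size l DF} for the base case $n = \ell$; the blow-up then multiplies the part size by an odd factor $s$ to climb to arbitrary admissible $n$. In the second regime, where $\ell \mid n$, the base object is a difference family over a group whose order reflects $n$ rather than $m-1$, and Theorem~\ref{part size l DF} again supplies the seed when the relevant parameters are odd. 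In both regimes I would verify that the admissibility conditions (the two congruence restrictions on $m$ modulo $4$) are exactly what guarantee the arithmetic needed to choose a valid odd blow-up factor $s$ and a starting part size $n_0$ for which the seed difference family exists.

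The main obstacle, and the reason the odd case is flagged as more complex than the even case, is the requirement that the blow-up factor $s$ be odd when $\ell$ is odd. This means one cannot freely double the part size, so the base difference families must already carry the correct residue of $n$ modulo powers of $2$: the factor of $2$ in the divisibility $2\ell \mid (m-1)n$ has to be absorbed either by $m-1$ being even or by $n$ contributing it, and when $n \equiv 2 \pmod 4$ this forces $m \equiv 0,1 \pmod 4$ exactly as in Condition~\ref{MT:1}. Consequently the genuine constructive difficulty is building explicit seed families (analogues of Lemmas~\ref{Lemma:l=2mod4:2} and~\ref{Lemma:l=2mod4:3}) for odd $\ell$ directly via Lemma~\ref{lemma1} in the boundary cases where no prior theorem applies, particularly small or irregular values of $m$ and $n$ where $\lambda_m \leq 2$. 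I would expect these explicit base constructions, rather than the blow-up bookkeeping, to occupy the bulk of Sections~\ref{odd1} and~\ref{odd2}.
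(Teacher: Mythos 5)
There is a genuine gap in your reduction: the claimed dichotomy ``either $\ell \mid m-1$ or $\ell \mid n$'' does not exhaust the admissible cases. Since $\ell$ is odd but in general composite, $2\ell \mid (m-1)n$ only forces $\ell \mid (m-1)n$ after extracting the factor of $2$, and the factors of $\ell$ may be split between $m-1$ and $n$. For instance, take $\ell=9$, $m=4$, $n=6$: then $2\ell=18$ divides $(m-1)n=18$, the triple is admissible ($n\equiv 2 \pmod 4$ and $m\equiv 0\pmod 4$), yet $9 \nmid 3$ and $9\nmid 6$. Your plan, which in the first regime assumes $\lambda_m=\ell$ and in the second assumes $\ell\mid n$, simply does not cover such triples. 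The paper's proof closes exactly this hole: it sets $\lambda_m=\gcd(\ell,m-1)$ and $\lambda_n=\ell/\lambda_m$, writes $n=2^a\lambda_n n'$ with $n'$ odd, and when $\lambda_m\geq 3$ builds a seed difference family with the \emph{shorter} cycle length $\lambda_m$ over $\Z_{m2^a}$ (via Theorem~\ref{complete DF}, Theorem~\ref{part size 2 DF}, or Theorem~\ref{ell divides m-1} according to $a$), then invokes Theorem~\ref{cc} with $u=\lambda_n$ and $s=\lambda_n n'$ so that the blow-up simultaneously inflates the part size and stretches the cycle length from $\lambda_m$ to $\lambda_m u=\ell$. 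You mention $\lambda_m$ and the blow-up in your opening paragraph, but you never exploit the cycle-lengthening parameter $u$ of Theorem~\ref{cc}, and without it the intermediate case $1<\lambda_m<\ell$ cannot be reached from either of your two regimes.

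Two smaller points. First, the constructive core you correctly identify as the hard part (the analogues of the seed lemmas for odd $\ell$) is supplied in the paper by Theorems~\ref{ell divides m-1} and~\ref{ell divides n}, whose hypotheses are stated for $\ell\geq 5$; the case $\ell=3$ must therefore be quoted separately (Theorem~\ref{l=3}), which your outline omits. Second, your placement of Theorem~\ref{part size l DF} as a seed ``in the first regime'' is misplaced: that result concerns part size $n=\ell$ with $m$ odd and serves as the $a=0$ seed of the $\lambda_m=1$ branch, not of the $\ell\mid m-1$ branch. Your tracking of the parity constraint on $s$ in Theorem~\ref{cc} and of how admissibility forces $m\equiv 0,1\pmod 4$ when $n\equiv 2\pmod 4$ is, on the other hand, consistent with the paper.
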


We first note that the case $\ell=3$, that is the existence of cyclic triple systems of $K_m[n]$ with no short-orbit cycles, has been settled in \cite{Pe, WangChang}.

\begin{thm}[\cite{Pe, WangChang}]\label{l=3}
There exists an $(mn,n,\mathscr{C}_3)$-DF
if and only if $m>2$, $6\mid (m-1)n$, and $m\equiv 0,1 \pmod{4}$ when $n\equiv 2\pmod{4}$.
\end{thm}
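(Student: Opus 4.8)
The necessity of the three conditions is already available: $m\ge 3$ because a $3$-cycle meets three distinct parts; $6\mid(m-1)n$ is the divisibility in Proposition~\ref{DFandCS} (with $2\ell=6$); and, when $n\equiv 2\pmod 4$, the restriction $m\equiv 0,1\pmod 4$ is the $\ell=3$ instance of Corollary~\ref{nonexistence corollary}. The work is therefore entirely in the sufficiency, which I would establish by exhibiting the base cycles explicitly as a system of \emph{difference triples}.

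\emph{Reformulation.} The base $3$-cycle $(0,x,x+y)$ has difference list $\pm x,\pm y,\pm(x+y)$, and conversely any triple $\{a,b,c\}$ of positive integers, none divisible by $m$, with $a+b=c$ (or $a+b+c=mn$) is realised by such a cycle; it is automatically a genuine triangle of $\Kmn$, since $a,b,c\notin m\Z$ places its three vertices in distinct parts. Thus an $(mn,n,\sC_3)$-DF is exactly a partition of the positive representatives of $\Z_{mn}\setminus m\Z_{mn}$ into $(m-1)n/6$ such difference triples: a relative, Heffter-type difference-triple problem for $\Z_{mn}$ with forbidden subgroup $m\Z_{mn}$.

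\emph{Reducing the odd part of $n$.} Since $\ell=3$ is odd, Theorem~\ref{cc} (with $u=1$) upgrades an $(mw,w,\sC_3)$-DF to an $(mws,ws,\sC_3)$-DF for every \emph{odd} $s$. As odd multiplication preserves $|n|_2$, it suffices to build, for each admissible pair, a base DF at a minimal part-size $w$; writing $a=|n|_2$, the relevant values are $w=2^a$ when $3\mid m-1$ and $w=3\cdot 2^a$ otherwise, and one checks that such a base triple is itself admissible and that $s=n/w$ is odd. The smallest of these are already covered by the quoted results: Theorem~\ref{complete DF} handles $w=1$ (odd $n$, $m\equiv 1\pmod 6$), Theorem~\ref{part size 2 DF} handles $w=2$ (here $n\equiv 2\pmod 4$, and its parity hypothesis $m\equiv 0,1\pmod 4$ is exactly our admissibility condition), and Theorem~\ref{part size l DF} handles $w=3$ ($n$ an odd multiple of $3$, $m$ odd and $m\ge 5$). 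What genuinely remains is the range the blow-up cannot enter, namely the part-sizes divisible by $4$ and the part-size $6$, together with their odd multiples.

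\emph{Direct constructions and the obstacle.} For these remaining base part-sizes I would give explicit Skolem-/Langford-type patterns: list the positive non-multiples of $m$ below $mn/2$ in consecutive blocks and assemble them into triples $a+b=c$ (or $a+b+c=mn$) of a controlled shape, much as the balanced $\ell$-sets of Section~\ref{even} are built but now closed into zero-sum triples, routing carefully around the ``holes'' at the multiples of $m$. I expect the bookkeeping near the admissibility boundary $n\equiv 2\pmod 4$ to be the most delicate, and the truly rigid regime to be $m=3$: there every triangle must be a transversal of the three parts, so its three differences are all congruent $\pmod 3$ and the triple must vanish $\pmod{3n}$, leaving very little freedom. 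This rigidity is exactly what makes the smallest case exceptional: for $(m,n)=(3,3)$ the only candidate triple $\{1,4,7\}$ sums to $12\not\equiv 0\pmod 9$, so no DF exists and $(3,3)$ must be excluded (consistently with the hypothesis $(m,\ell)\ne(3,3)$ in Theorem~\ref{part size l DF}). With the finitely many residue families of base constructions in place, Proposition~\ref{DFandCS} turns each DF into the required cyclic $3$-cycle system.
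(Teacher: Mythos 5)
You should first be aware that the paper does not prove Theorem~\ref{l=3} at all: it is imported wholesale from Peltesohn and Wang--Chang, so there is no internal argument to compare yours against; you are attempting to reprove the cited result. Your necessity half is fine (the counting behind Proposition~\ref{DFandCS} gives $6\mid(m-1)n$, Corollary~\ref{nonexistence corollary} with $\ell=3$ gives the mod-$4$ condition, and $m=2$ is ruled out by bipartiteness), and your reduction via Theorem~\ref{cc} with $u=1$ and odd $s$ to base part sizes $w=2^a$ and $w=3\cdot 2^a$, seeded by Theorems~\ref{complete DF}, \ref{part size 2 DF} and \ref{part size l DF}, is sound and mirrors how the paper itself handles $\ell\geq 5$ in Theorem~\ref{odd cycle main}. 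But the entire content of the theorem lies in the base cases this reduction does not reach ($w=2^a$ with $a\geq 2$, $w=6$, and the $3\cdot 2^a$ family), and there your proposal only promises ``Skolem-/Langford-type patterns'' with careful routing around the holes. Nothing in the paper can fill this in for you: Lemma~\ref{lemma:odd1} and all of Sections~\ref{odd1}--\ref{odd2} presuppose $\ell\geq 5$ (the paths there have length $\ell-3$ and the sets $X_a$ have size $\ell-5$), so what you defer as bookkeeping is precisely the substance of the Wang--Chang theorem. As a proof, this is a hole, not a detail.

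Second, your instinct about $(m,n)=(3,3)$ is correct, but your own rigidity argument proves much more than you draw from it, and this breaks your plan. For $m=3$ every triangle is a transversal of the three parts, so it can be oriented so that its three directed differences are all $\equiv 1\pmod 3$ and sum to $0$ in $\Z_{3n}$. In a putative $(3n,n,\sC_3)$-DF these oriented differences run exactly once through the residue-$1$ elements of $\Z_{3n}$ (their negatives being the residue-$2$ elements), so summing over all triangles forces
\[
1+4+\cdots+(3n-2)\;=\;\frac{n(3n-1)}{2}\;\equiv\;0\pmod{3n},
\]
i.e.\ $6\mid 3n-1$, which is impossible since $3n-1\equiv 2,5 \pmod 6$. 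Hence no $(3n,n,\sC_3)$-DF exists for \emph{any} $n$: the exception is not the single pair $(3,3)$ but all of $m=3$ (consistent with the exclusion $(m,\ell)\neq(3,3)$ in Theorem~\ref{part size l DF}), and the correct statement must require $m>3$ rather than $m>2$ --- the statement as transcribed in this paper is itself off on this point. Consequently the endpoint you propose to prove (the theorem with only $(3,3)$ excised) is false, and your plan of direct constructions for the remaining $m=3$ cases, e.g.\ $(m,n)=(3,9)$ or $(3,12)$, undertakes the impossible.
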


To prove the main result, we first consider in Section \ref{odd1} the case where $\ell>3$ is  a divisor of $m-1$,
and $n\equiv 0 \pmod{4}$, and show the following.
\begin{thm}\label{ell divides m-1}
Let $\ell \geq 5$ be odd, and let $m\geq 3$ and $n\geq1$.  
If $m\equiv 1 \pmod{\ell}$ and $n\equiv 0 \pmod{4}$, then there exists a 
$(mn, n, \sC_\ell)$-DF.
\end{thm}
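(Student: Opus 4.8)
The plan is to exhibit an explicit $(mn,n,\sC_\ell)$-DF, i.e.\ to partition the positive differences $D=[1,mn/2-1]\setminus m[1,n/2-1]$ into $(m-1)n/2\ell$ subsets of size $\ell$, each realizable as the difference list $\pm D_i$ of an $\ell$-cycle. The first observation is that $D$ splits by residue modulo $m$: for each $r\in[1,m-1]$ the differences congruent to $r$ are exactly $\{r+jm : 0\le j\le n/2-1\}$, so every nonzero residue class contributes the same \emph{even} number $n/2$ of differences (here the hypothesis $4\mid n$ already makes itself felt). Before constructing anything, I would record that the blow-up recursion of Theorem~\ref{cc} cannot reduce the part size below its $2$-part: since $\ell$ is odd, the requirement that $\ell(s-1)$ be even forces the multiplier $s$ to be odd, so the full power of two dividing $n$ must already be present in the base family. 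Consequently it suffices to build the family for $n=2^a$ with $a\ge 2$ and then recover general $n\equiv 0 \pmod 4$ by a single application of Theorem~\ref{cc} (with $w=2^a$, $s$ the odd part of $n$, and $u=1$); but one genuinely has to produce the base family directly for every power of two, as no recursion supplies it.

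The second ingredient I need is an odd-length counterpart of Lemma~\ref{lemma1}. For even cycles that lemma realizes any \emph{balanced} difference set via an alternating (zig-zag) sequence; for an $\ell$-cycle with $\ell$ odd the closure condition is instead that the $\ell$ chosen differences admit a sign assignment with $\sum_i \epsilon_i d_i = 0$, i.e.\ that $D_i$ splits into two parts of equal sum, together with a sequencing whose partial sums are pairwise distinct. I would either invoke the preliminary realization lemmas announced for Section~\ref{odd} or prove such a statement directly: given a difference set of the appropriate shape, set $c_0=0$ and $c_h=c_{h-1}+(-1)^h\delta_h$ for a suitable reordering $\delta_1,\dots,\delta_\ell$ of $D_i$, chosen so that every $c_h$ lies in a short interval $[-d,d']$ with $d+d'<mn$. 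This confinement guarantees that the $c_h$ are pairwise distinct modulo $mn$, so that $C=(c_0,\dots,c_{\ell-1})$ is a genuine cycle with $\Delta C=\pm D_i$.

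With these tools, the construction proper uses $\ell\mid m-1$ to organize the $m-1$ residue classes into $p=(m-1)/\ell$ blocks of $\ell$ classes, and uses the $n/2$ (even) levels within each class to assemble the required $pn/2$ cycles; each $\ell$-set $D_i$ is formed by selecting one difference from $\ell$ suitably coupled residue/level slots so that $D_i$ is sum-balanced and the family $\{D_i\}$ tiles $D$ exactly, with no repeated value and no multiple of $m$. I expect the main obstacle to be precisely this simultaneous bookkeeping for odd $\ell$: producing difference sets that are at once sum-balanced and sequenceable into cycles with \emph{distinct} vertices, while their union is all of $D$. The even case avoids this because any balanced set sequences automatically; the odd case has no free closing difference, so the sign-sum-zero condition couples all $\ell$ differences at once and must be engineered class by class, and it is exactly the parity control afforded by $4\mid n$ that makes the class sums splittable. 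Once the partition $D=\bigsqcup_i D_i$ and the cycles $C_i$ with $\Delta C_i=\pm D_i$ are in place, the family $\{C_i\}$ is the desired $(mn,n,\sC_\ell)$-DF (and a cyclic $\ell$-cycle system of $\Kmn$ then follows from Proposition~\ref{DFandCS}, although only the difference family is asserted here).
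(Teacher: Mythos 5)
Your proposal correctly sets up the framework the paper also uses --- reduce to $n$ a power of $2$ via Theorem~\ref{cc} (your observation that $s$ must be odd, so the full $2$-part of $n$ must live in the base family, is exactly right), then partition the difference set $D$ into $\ell$-sets and realize each as the difference list of an $\ell$-cycle. But the entire content of the theorem lies in the step you leave as a plan: actually producing the partition of $D$ into $\ell$-sets that are simultaneously realizable and tiling. You name this as ``the main obstacle'' and do not resolve it, so the proposal is an outline rather than a proof. Moreover, the mechanism you propose for the odd-length closure --- engineering each $D_i$ to be sum-balanced (a signed sum of all $\ell$ differences equal to zero) and then sequencing it --- is not how the paper proceeds, and it is a substantially harder condition to arrange for every block simultaneously. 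The paper instead builds each cycle as the concatenation of a zig-zag path $P_a$ of length $\ell-3$ (Lemma~\ref{lemma:odd1}) realizing two designated differences $a,a^*$ together with $\ell-5$ differences arranged in consecutive pairs, and a $3$-path $Q_a=0,-y_a,-1,p_a$ whose third difference $p_a+1=a^*-a+\lambda$ is \emph{forced} by the endpoint of $P_a$; the subset $B\subset D$ is then defined in advance (via the bijection $a\mapsto a^*$ of Lemma~\ref{lem:AA*}) to be exactly the set of these forced closing differences. This sidesteps any per-block balance condition: only the global identity $\{a^*-a+\lambda \mid a\in A\}=B$ needs to be engineered, which is what Lemmas~\ref{lemma:5subsets:0}--\ref{lemma:5subsets:2} accomplish.

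A second, smaller issue: your picture of organizing the $m-1$ residue classes into $(m-1)/\ell$ blocks and drawing one difference from each of $\ell$ ``slots'' does not obviously survive the parity constraints that drive the paper's case split. The paper needs three genuinely different constructions ($n\equiv 4\pmod 8$ with $\nu$ odd; $n\equiv 0\pmod 8$ with $m$ odd; $n\equiv 0\pmod 8$ with $m$ even), and in the last case the auxiliary set $Y$ must be partitioned partly into pairs at distance $2$ rather than consecutive pairs, with the $3$-path modified accordingly. None of this bookkeeping is visible in your sketch, and it is precisely where the hypothesis $4\mid n$ (and the distinction $m\equiv 1$ versus $\ell+1\pmod{2\ell}$) gets used. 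So while your high-level strategy is sound and partially coincides with the paper's, the proof as written has a genuine gap at its core.
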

Then, in Section \ref{odd2} we consider the case where $2\ell \mid n$, and show the following.
\begin{thm}\label{ell divides n}
Let $\ell \geq 5$ be odd, and let $m\geq 3$ and $n\geq1$.  
There exists a $(mn, n, \sC_\ell)$-DF in each of the following cases:
\begin{enumerate}
  \item $n=2\ell$ and $m\equiv 0,1\pmod{4}$,
  \item $n\equiv 0 \pmod{4\ell}$.
\end{enumerate}
\end{thm}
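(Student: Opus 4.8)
The plan is to prove both cases by first building a small number of \emph{base} difference families and then inflating them with the blow-up construction of Theorem~\ref{cc}. The crucial restriction to keep in mind is that, since $\ell$ is odd, the hypothesis ``$\ell(s-1)$ is even'' in Theorem~\ref{cc} forces the inflation factor $s$ to be \emph{odd}; hence blowing up an $(mw,w,\sC_\ell)$-DF (with $u=1$) can only reach part sizes $ws$ with $s$ odd, so the $2$-adic part of $n/\ell$ must already be present in the base. Accordingly I would reduce both statements to constructing $(m\,2^{a}\ell,\,2^{a}\ell,\,\sC_\ell)$-DFs. Case~1 is exactly the base $a=1$ (i.e.\ $n=2\ell$), carrying the parity restriction $m\equiv 0,1\pmod 4$; and for case~2, writing $n=2^{a}\ell\,t'$ with $t'$ odd and $a\ge 2$, it suffices to produce the base $(m\,2^{a}\ell,2^{a}\ell,\sC_\ell)$-DF and then apply Theorem~\ref{cc} with $u=1$, $s=t'$. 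Thus the whole theorem rests on a family of base constructions indexed by the $2$-adic parameter $a$, to be carried out uniformly (or by induction on $a$).

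For each base, with $N_0=2^{a}\ell$, I must partition $\Z_{mN_0}\setminus m\Z_{mN_0}$ into the $\pm$-images of $(m-1)2^{a-1}$ sets of size $\ell$, each realizable as the list of differences of an $\ell$-cycle, and then invoke the odd-cycle realization lemmas of Section~\ref{odd}. I would organize the missing differences by residue class $i\in[1,m-1]$ modulo $m$: those congruent to $\pm i$ fill two cosets of $m\Z_{mN_0}$, each of size $N_0$, and the task is to group these into suitable $\ell$-sets. The essential new feature compared with the even case treated by Lemma~\ref{lemma1} is the closure condition. For an odd cycle the signed differences cannot split evenly between $+$ and $-$, so if all $\ell$ differences of a cycle lay in a single class $\pm i$, the signed sum would be $i\cdot(p-q)$ with $p+q=\ell$ odd, hence $p-q$ odd, and this cannot vanish modulo $m$ in general. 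Therefore each $\ell$-set must draw on at least two residue classes so that the surplus residue cancels modulo $m$. I would arrange this by pairing the classes $i$ and $m-i$, or by reserving a short ``seed'' of mixed differences to absorb the parity, and filling the remaining positions with consecutive translates $i+jm$, so that each resulting set is balanced and closes up as an honest cycle.

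It then remains to verify that the chosen $\ell$-sets partition $\Z_{mN_0}\setminus m\Z_{mN_0}$ exactly (the count is immediate: $(m-1)2^{a-1}$ cycles contributing $2\ell$ differences each, for a total of $(m-1)2^{a}\ell=(m-1)N_0$) and that the vertices of each cycle are distinct in $\Z_{mN_0}$, and finally to inflate by the odd cofactor via Theorem~\ref{cc}. I expect the main obstacle to be precisely this odd-cycle bookkeeping: unlike the clean balanced-set criterion of Lemma~\ref{lemma1}, the residue-mixing required to close each odd cycle must be globally coordinated so that every class $\pm i$ is used the correct number of times, no difference is repeated, and all vertices stay distinct. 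This coordination interacts delicately with the parameter $a$ and, in case~1, with the condition $m\equiv 0,1\pmod 4$, which is forced by Corollary~\ref{nonexistence corollary} and must reemerge as exactly the arithmetic condition making the $a=1$ partition feasible. Handling this uniformly across all admissible $m$ and all $a\ge 1$ is where the substantive work lies.
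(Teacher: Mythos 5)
Your reduction is sound and matches the architecture the paper itself uses at the level of Theorem~\ref{odd cycle main}: since $\ell$ is odd, Theorem~\ref{cc} only permits odd inflation factors $s$, so the entire $2$-adic part of $n/\ell$ must be built into a base difference family, and it then suffices to produce $(m\,2^a\ell,\,2^a\ell,\,\sC_\ell)$-DFs for all $a\geq 1$ (with $m\equiv 0,1\pmod 4$ forced when $a=1$) and inflate by the odd cofactor. Your observation that an odd cycle cannot take all its differences from a single residue class $\pm i$ modulo $m$ is also correct. But everything after that is a statement of intent rather than a proof: the base constructions, which are the entire substance of the theorem, are not carried out, and you say so yourself (``handling this uniformly across all admissible $m$ and all $a\geq 1$ is where the substantive work lies''). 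Since Theorem~\ref{cc} cannot double a part size when $\ell$ is odd, there is no induction on $a$ available from the blow-up lemma, so ``uniformly or by induction on $a$'' is not a mechanism --- an explicit construction covering every $a$ simultaneously is required, and none is given. Your proposed devices (``pairing the classes $i$ and $m-i$'', ``reserving a short seed of mixed differences'') are too underspecified to be checked: nothing guarantees that the resulting $\ell$-sets partition $\Z_{mN_0}\setminus m\Z_{mN_0}$ exactly, that each is realizable as $\Delta C$ for a genuine cycle (there is no odd-length analogue of the clean balanced-set criterion of Lemma~\ref{lemma1}), or that the vertices of each cycle are distinct in $\Z_{mN_0}$.

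For comparison, the paper's proof occupies all of Section~\ref{odd2}: Proposition~\ref{part size 2l} handles $n=2\ell$ directly, and Propositions~\ref{part size 0 mod 4ell - m odd} and~\ref{part size 0 mod 4ell - m even} handle $n=4\ell\nu$ for \emph{every} $\nu\geq 1$ (not just powers of two) by first partitioning the difference set into structured pieces $A$, $A^*$, $B$, $Y$, $W$ via Lemmas~\ref{lemma:dividesn:1} and~\ref{lemma:dividesn:2} --- with Lemma~\ref{lem:AA*} supplying a bijection $a\mapsto a^*$ whose difference set $\{a^*-a\}$ is prescribed exactly, and with $W$ split into blocks $W_a$ of pairs at distance $m$ --- and then assembling each cycle $C_a$ as a long path through $W_a$ joined to a short path using $y_a$, $y_a-1$ and an element of $B$, followed by an explicit monotonicity argument showing the vertices are distinct. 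That level of explicit, globally coordinated bookkeeping is precisely the content your proposal defers, so as it stands the proposal has a genuine gap rather than an alternative proof.
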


We now have all the ingredients we need to prove Theorem \ref{odd cycle main}.
\begin{proof}[Proof of Theorem \ref{odd cycle main}] 
The case $\ell=3$ is dealt with in Theorem \ref{l=3}, so we assume $\ell \geq 5$.  Necessity of the condition that 
$n \not\equiv 2 \pmod{4}$ when $m \equiv 2$ or $3 \pmod{4}$ follows from Corollary~\ref{nonexistence corollary}, 
so we show sufficiency.  

Let $\lambda_m=\gcd(\ell,m-1)$, $\lambda_n=\ell/\lambda_m$, and $n=2^a \lambda_n n'$ where $a \geq 0$ and $n'$ is odd.   Note that if $a=0$, then the condition $2\ell \mid (m-1)n$ implies that $m$ is odd. 

First, suppose that $\lambda_m \geq 3$.  In this case, Theorems~\ref{complete DF} and \ref{part size 2 DF}
(when $a=0,1$), and Theorem~\ref{ell divides m-1} (when $a>1$) guarantee that there is an
$(m2^a,2^a,\mathscr{C}_{\lambda_m})$-DF, and the result follows by applying Theorem~\ref{cc} with $u=\lambda_n$ and $s=\lambda_n n'$.  

Otherwise, $\lambda_m=1$ so that $\ell \mid n$, and by Theorem~\ref{part size l DF} (when $a=0$) and 
Theorem~\ref{ell divides n} (when $a>0$) there exists a
$(m2^a\ell,2^a\ell,\mathscr{C}_{\ell})$-DF.
The result now follows by applying Theorem~\ref{cc} with $u=1$ and $s=n'$.
\end{proof}

We end this section with two lemmas which will be used to construct the difference families of
Theorems \ref{ell divides m-1} and  \ref{ell divides n}.

\begin{lemma}\label{lemma:odd1} Let $D=\{d, d^*\}\ \cup\ \mathbb{X}$ be a set of $2\lambda$ positive integers with $d<d^*$. If $\mathbb{X}$ can be partitioned into pairs of consecutive integers,
  then there exists a path 
$P=0, p_1, p_2, \ldots, p_{2\lambda}$ 
of length $2\lambda$ satisfying the following properties:
\renewcommand{\theenumi}{\roman{enumi}}
\begin{enumerate}
  \item \label{prop1} $(p_1, p_2) = (-d, d^*-d)$, and 
  $p_i\in [d^*-d+1, d^*-d + \max \mathbb{X}]$ for $i>2$,
  \item \label{prop2} $p_{2\lambda} = d^* - d  + \lambda-1$,
  \item  $\Delta P = \pm D$.
\end{enumerate}
\end{lemma}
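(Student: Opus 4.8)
The plan is to build $P$ by a zig-zag (alternating-sign) walk, in the spirit of the proof of Lemma~\ref{lemma1}, using the two distinguished differences $d$ and $d^*$ for the first two steps and then consuming $\mathbb{X}$ one consecutive pair at a time, arranging each pair to produce a net displacement of exactly $+1$. Concretely, I would set $p_0=0$, take the first difference to be $-d$ and the second to be $+d^*$, so that $p_1=-d$ and $p_2=d^*-d=:L$; this already secures the required values of $(p_1,p_2)$ in property~\ref{prop1}. I then write $\mathbb{X}$ as a disjoint union of $\lambda-1$ consecutive pairs and list them as $\{a_1,a_1+1\},\ldots,\{a_{\lambda-1},a_{\lambda-1}+1\}$, ordered so that their larger elements $m_j:=a_j+1$ are \emph{strictly decreasing}, with $m_1=\max\mathbb{X}=:M$.

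For the $j$-th pair I would append the two steps $+m_j$ then $-(m_j-1)$, so that each pair raises the current vertex by exactly $1$. After processing the first $j$ pairs the walk sits at height $L+j$; hence $p_{2\lambda}=L+(\lambda-1)$, which is property~\ref{prop2}, and the multiset of differences used is precisely $\pm(\{d,d^*\}\cup\mathbb{X})=\pm D$, giving property~(iii). The walk thus alternates between ``valleys'' of heights $1,2,\ldots,\lambda-1$ (the points reached after each pair) and ``peaks'' of heights $(j-1)+m_j$ (the points reached after the up-step of pair $j$), all lying above $L$.

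The heart of the argument is checking that every $p_i$ with $i>2$ lands in $[L+1,L+M]$ and — the genuinely delicate point — that the $p_i$ are pairwise distinct, so that $P$ is really a path. Here I would use the key combinatorial observation that, since the pairs are \emph{disjoint}, their larger elements $m_j$ are not merely distinct but differ pairwise by at least $2$. This forces $m_j\le M-2(j-1)$, so every peak height $(j-1)+m_j\le M$ and no vertex exceeds $L+M$; it likewise forces $m_j\ge 2(\lambda-j)$, so every peak height is at least $\lambda$, placing all peaks strictly above all valleys (which are $\le\lambda-1$) and thereby separating the two families, while the gap-$2$ condition also makes the peak heights strictly decreasing and hence mutually distinct. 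Together with the trivial facts $p_0=0$, $p_1=-d<0$ and $p_2=L\ge 1$, all distinct from one another and from the later vertices (which lie strictly above $L$), this yields the required injectivity.

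I expect this ordering-and-distinctness step to be the main obstacle. Choosing up-then-down steps in decreasing order of pair size is exactly what prevents the peaks from overshooting $L+M$, and it is the disjointness of the pairs (not merely their being consecutive) that rules out peak–peak and peak–valley collisions; the weaker ``consecutive'' hypothesis alone would not suffice. Once the inequalities $m_i-m_{i+1}\ge 2$ and the resulting bounds $2(\lambda-j)\le m_j\le M-2(j-1)$ are established, the verifications of properties~\ref{prop1}--(iii) reduce to routine bookkeeping of the partial sums defining the $p_i$.
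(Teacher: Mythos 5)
Your construction is exactly the one in the paper: after the initial steps $-d,+d^*$ you consume the consecutive pairs in decreasing order via an up-step by the larger element followed by a down-step by the smaller, and you verify simple-ness by the same interleaving argument (valleys $L+1,\dots,L+\lambda-1$ strictly increasing, peaks strictly decreasing and bounded between $\lambda$ and $M$ above $L$, hence separated from the valleys), using the same key fact that disjointness of the pairs forces their larger elements to differ by at least $2$. The proposal is correct and essentially identical to the paper's proof.
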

\begin{proof} 
Letting $\mathbb{X}=\{x_1, x_2, \ldots, x_{2\lambda-2}\}$, we can assume that
\begin{equation}\label{X}
   x_i>x_{i+1} \;\;\; \text{and}\;\;\; x_{2j-1} - x_{2j}=1,
\end{equation}  
for every $i\in[1,2\lambda-3]$ and $j\in[1,\lambda-1]$.
Now, let $P=0, p_1, p_2, \ldots, p_{2\lambda}$ be the trail defined as follows:
\[
p_i = d^* - d +
\begin{cases}
  i/2-1 & \text{if $i\in[1,2\lambda]$ and $i$ is even},\\
  -d^*  & \text{if $i=1$}, \\
  x_{i-2} + (i-3)/2 & \text{if $i\in[3,2\lambda]$ and $i$ is odd},   
\end{cases}
\]
By property \eqref{X}, it is not difficult to check that the sequence 
$p_1, 0, p_2, p_4,$ $\ldots, p_{2\lambda}, p_{2\lambda-1}, p_{2\lambda-3}, \ldots, p_3$ is strictly increasing. Therefore, $P$ is a path, and for every $i>2$, we have that $p_i\in [p_4, p_3] = [d^*-d+1, d^*-d + x_1]$ where $x_1=\max \mathbb{X}$.
Also, 
\begin{align*}
\Delta P &= \pm\{d, d^*\}\ \cup\ \pm\{p_{2j+1}-p_{2j}, p_{2j+1}-p_{2j+2}\mid j\in[1, \lambda-1]\} \\
         &= \pm\{d, d^*\}\ \cup\ \pm\{x_{2j-1}, x_{2j-1} -1 \mid j\in[1, \lambda-1]\} = \pm D.
\end{align*}
Therefore, $P$ is the desired path.
\end{proof}

\begin{ex}
Let $\lambda=3$ $d=9, d^*=11$ and $\mathbb{X}=\{7,8,13,14\}$: the path is $(0,-9,2,16,3,11,4)$
\end{ex}

\begin{nota}
We will use the notation $[a,b]_e$ (resp. $[a,b]_o$) to denote the set of even (resp. odd) integers in $\{a, a+1, \ldots, b\}$.
Also,
given nonempty sets $X_i\subseteq \Z$ and integers $c_i, c_i'$, for $i\in[1,t]$,  
we denote by 
$\sum _{i=1}^t c_i\cdot X_i\cdot c'_i$
the subset of $\Z$ defined as follows: 
\[\sum _{i=1}^t c_i\cdot X_i\cdot c'_i = 
\left\{\sum _{i=1}^t c_ix_ic'_i \mid x_i\in X_i\;\text{for every $i\in[1,t]$}\right\}.\] 
If some $X_i = \emptyset$, then we define $\displaystyle\sum _{i=1}^t c_i\cdot X_i\cdot c'_i = \emptyset$.
\end{nota}

In the proofs of Theorems~\ref{ell divides m-1} and~\ref{ell divides n}, a crucial ingredient will be the following Lemma \ref{lem:AA*}. 

\begin{lemma}\label{lem:AA*} 
Let $I$ and $J$ be two non-empty intervals of $\Z$, with $|I|<\mu$, and 
 set $A = I + J\cdot \mu$. For every $\tau\in \Z$, there is a bijection
 $a\in A \mapsto a^* \in A+\tau$ such that
 \[
   \{a^* - a\mid a\in A\} =  
   \Big(\big[1, 2|I|\big]_o + \big[1, 2|J|\big]_o\cdot \mu\Big) + \tau - |I| - |J|\mu.
 \]  
\end{lemma}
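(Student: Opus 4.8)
The plan is to realize the bijection as a product of two independent one-dimensional \emph{reversal} maps, one acting on the $I$-coordinate and one on the $J$-coordinate, followed by a translation by $\tau$. Write $I=[i_0,i_1]$ and $J=[j_0,j_1]$, and set $p=|I|=i_1-i_0+1$ and $r=|J|=j_1-j_0+1$. The first observation is that the hypothesis $|I|<\mu$ guarantees that every $a\in A$ has a \emph{unique} expression $a=i+j\mu$ with $i\in I$ and $j\in J$: if $i+j\mu=i'+j'\mu$ then $i-i'=(j'-j)\mu$, and since $|i-i'|\le p-1<\mu$ this forces $j=j'$ and hence $i=i'$. Thus $A$ is in canonical bijection with the grid $I\times J$, and in particular $|A|=pr$.

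Next I would define the map explicitly. For $a=i+j\mu\in A$, set
\[
a^* = (i_0+i_1-i) + (j_0+j_1-j)\mu + \tau.
\]
Since $i\mapsto i_0+i_1-i$ is an involution of $I$ and $j\mapsto j_0+j_1-j$ is an involution of $J$, the element $(i_0+i_1-i)+(j_0+j_1-j)\mu$ again lies in $A$, so $a^*\in A+\tau$; moreover $a\mapsto a^*$ is a bijection from $A$ onto $A+\tau$, being the composition of two coordinatewise involutions with a translation.

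The heart of the computation is the difference $a^*-a=(i_0+i_1-2i)+(j_0+j_1-2j)\mu+\tau$. As $i$ runs through $I$, the quantity $i_0+i_1-2i$ runs bijectively through the symmetric arithmetic progression $S_p=\{-(p-1),-(p-3),\ldots,p-3,p-1\}$ of step $2$; likewise $j_0+j_1-2j$ runs bijectively through $S_r$. Because $i$ and $j$ vary independently, the set of differences is exactly $\{s+t\mu+\tau\mid s\in S_p,\ t\in S_r\}=S_p+S_r\cdot\mu+\tau$. It then remains to identify this with the right-hand side of the statement: one checks that $[1,2|I|]_o-|I|=S_p$ and $([1,2|J|]_o-|J|)\cdot\mu=S_r\cdot\mu$, so that $\big([1,2|I|]_o+[1,2|J|]_o\cdot\mu\big)+\tau-|I|-|J|\mu$ equals $S_p+S_r\cdot\mu+\tau$, as required.

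The step requiring the most care is not the construction of the map but the bookkeeping showing that the difference set is genuinely the claimed product set of full size $pr$. Here the restriction to \emph{odd} representatives in the target is essential: two elements $s+t\mu$ and $s'+t'\mu$ with $s,s'\in[1,2|I|]_o$ and $t,t'\in[1,2|J|]_o$ can coincide only if $s-s'=(t'-t)\mu$, and since $t,t'$ are odd their difference is even, so $|t'-t|\ge 2$ whenever $t\neq t'$; as $|s-s'|\le 2|I|-2<2\mu$, this forces $t=t'$ and then $s=s'$. Thus $|I|<\mu$ together with the oddness of the coordinates is exactly what makes the $pr$ differences distinct, matching $|A|$ and confirming that $a\mapsto a^*$ is the desired bijection.
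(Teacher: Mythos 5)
Your proof is correct and follows essentially the same route as the paper: your coordinatewise reversal $a^* = (i_0+i_1-i)+(j_0+j_1-j)\mu+\tau$ is exactly the paper's map $a^* = \max A + \min A + \tau - a$, and the computation of $a^*-a$ is the same. The only (harmless) divergence is in the final bookkeeping, where the paper concludes from injectivity of $a\mapsto a^*-a$ while you instead verify directly that the $|I|\cdot|J|$ values in the target set are pairwise distinct; both observations close the argument.
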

\begin{proof} It is not difficult to check that the map
$a\in A \mapsto a^* \in A+\tau$, with $a^*= \max A + \min A + \tau - a$, is 
a bijection.
%
%

Let $I= [i_1 -s_I +1, i_1]$ and $J=[j_1 -s_J +1, j_1]$ be intervals of size $s_I$ and $s_J$, respectively. For every $a= i + j\mu\in A$, we have that
$a^* = (2i_1 - s_I +1 -i)+ (2j_1 - s_J +1 -j)\mu +\tau$,
hence 
\begin{align*}
a^* -a &=   2(i_1-i) +1 + (2(j_1 -j) +1)\mu + (\tau - s_I - s_J\mu)\\
       &\in \left(2\cdot\big[0, s_I - 1\big] +1\right) + \left(2\cdot\big[0, s_J - 1\big] +1 \right) \mu
       +(\tau - s_I - s_J\mu)      \\
       &= \big[1, 2s_I\big]_o + \big[1, 2s_J\big]_o\cdot \mu + (\tau - s_I - s_J\mu).
\end{align*}
Since the map $a\mapsto a^*-a$ is injective, the assertion follows.
\end{proof}

\section{The proof of Theorem \ref{ell divides m-1}}\label{odd1}

The aim of this section is to prove Theorem \ref{ell divides m-1}.
The case $n\equiv 4\pmod{8}$ is treated in Proposition \ref{part size 4 nu}, while the case $n\equiv 0 \pmod{8}$ is dealt with in Proposition \ref{part size 8 nu - 1 mod 2ell} for $m$ odd, and in Proposition \ref{part size 8 nu - ell+1 mod 2ell}, for $m$ even.

The idea beneath the three proofs is similar: 
we partition the set $D= [1,mn/2]\setminus [1, n/2]\cdot m$ of differences to be realized into various sets. 
A first set $A$ of size $q$, the cardinality of the DF, will serve as the set of indices for the cycles in the DF, and it will be paired up with a second $q$-set, the set $A^*$. To each pair of elements $(a,a^*)\in A\times A^*$ we will associate a set $X_a\subset D$ of size $\ell-5$ that can be partitioned into pairs of consecutive integers, so that  we can have a path $P_a$ of length $\ell-3$, built using Lemma \ref{lemma:odd1} for each $a \in A$, and whose lists of differences between them cover $D'= (\cup_{a\in A} X_a) \cup A\cup A^* $.
We obtain an $\ell$-cycle $C_a$ by joining the path $P_a$ to a path $Q_a$ of length 3, built to ensure that the differences coming from $Q_a, a \in A$, will describe the set $D\setminus D'$. The set $\cC=\{C_a\mid a\in A\}$
will be the desired difference family. The partitions of $D$ just outlined are given in Lemmas 
\ref{lemma:5subsets:0}, \ref{lemma:5subsets:1} and \ref{lemma:5subsets:2}.

\begin{lemma}\label{lemma:5subsets:0} 
Let $\ell=2\lambda+3 \geq 5$ be odd, let $m\equiv 1\pmod{\ell}$ and set $s=2(m-1)/\ell$. 
Then there exists a partition of $D=[1, 2m-1]\setminus \{m\}$ into five subsets
$A, A^*, B, X$, and $Y$ satisfying the following properties:
\begin{enumerate}
  \item $|A|=|A^*|=|B|= s$, $|X|= (\ell-5)s$,  $|Y| = 2s$;
  \item there is a bijection $a\in A\mapsto a^* \in A^*$ such that 
  $B-\lambda = \{a^*-a\mid a\in A\}$;
  \item $X$ and $Y$ can be partitioned into pairs of consecutive integers.   
  \item $1\not\in A$, $B-\lambda\subset[1, s+1]$, and $Y\subset [1, m-1]$ when $\ell\geq 7$.
\end{enumerate}
\end{lemma}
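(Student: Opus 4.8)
The plan is to prove the lemma by an explicit construction, exploiting the two ``degrees of slack'' built into the statement: the set $B-\lambda$ is only required to be \emph{some} $s$-subset of the interval $[1,s+1]$, so one value of $[1,s+1]$ may be dropped, and the block on which the bijection $a\mapsto a^*$ is realized may have length $2s$ or $2s+1$. Throughout I would use that $s=2(m-1)/\ell$ is even, since $\ell$ is odd and divides $m-1$, and that $|D|=2m-2=\ell s$, so that a cardinality count already confirms $|A|+|A^*|+|B|+|X|+|Y|=3s+(\ell-5)s+2s=\ell s$.

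First I would fix the cluster $B$ near the bottom of $[1,2m-1]$, taking $B=[\lambda+1,\lambda+s]$ or $B=[\lambda+2,\lambda+s+1]$, so that $B-\lambda$ equals $[1,s]$ or $[2,s+1]$; either way $B-\lambda\subset[1,s+1]$ and $|B|=s$, as (1) and (4) demand. Next I would realize the bijection $a\mapsto a^*$ with $\{a^*-a\}=B-\lambda$ by placing $A\cup A^*$ as a block of consecutive integers high up in $[m+1,2m-1]$ and pairing its elements by a Skolem- or Langford-type rule, so that its $s$ differences fill the chosen interval inside $[1,s+1]$; such pairings of a block of $2s$ consecutive integers (respectively $2s+1$, with one position left empty) are standard, the plain or hooked version being selected according to $s\bmod 4$. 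Placing $A\cup A^*$ in the upper half guarantees $1\notin A$ and, for $\ell\ge 7$, leaves the entire lower half free, so that $Y\subset[1,m-1]$ can be met.

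With $B$ and $A\cup A^*$ fixed, the leftover $X\cup Y=D\setminus(A\cup A^*\cup B)$ is a union of a few intervals: a bottom stub below $B$, a middle interval between $B$ and the hole at $m$, and a top interval between $m$ and the $A\cup A^*$ block; the above count gives $|X\cup Y|=(\ell-3)s$, so I would allot the $2s$ elements of $Y$ to the lower intervals and the remaining $(\ell-5)s$ to $X$. (When $\ell=5$ one has $\lambda=1$, $X=\emptyset$, only $Y$ is distributed, and the clause $Y\subset[1,m-1]$ is not required; this is a separate, easier case.) The crux is property (3): each leftover interval must split into pairs of consecutive integers, i.e.\ must have even length. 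I expect this to be the main obstacle, because the interval lengths have parities governed by $\lambda\bmod 2$ and $s/2\bmod 2$ and are not simultaneously even for every residue.

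The key point is that the slack noted above is exactly enough to repair each parity. Shifting $B$ from $[\lambda+1,\lambda+s]$ to $[\lambda+2,\lambda+s+1]$ moves the dropped value of $[1,s+1]$ from $s+1$ to $1$ and flips the parity of the bottom stub $[1,\min B-1]$; using a plain versus hooked block (length $2s$ versus $2s+1$) together with sliding the block's starting point flips the parity of the top interval; and a single element can be shuttled across the hole at $m$ to correct the middle interval. I would therefore split the argument into sub-cases according to $\lambda\bmod 2$ and $s\bmod 4$, plus the separate case $\ell=5$, and in each case write down explicit $A,A^*,B,X,Y$, verifying the four properties by direct computation of the cardinalities, of the difference set $\{a^*-a\}$, and of the consecutive-pair partitions of $X$ and $Y$.
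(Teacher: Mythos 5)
Your overall architecture (an interval $B$ shifted by a parameter $\epsilon\in\{0,1\}$, a paired block realizing the difference set $B-\lambda$, and the complement filled by consecutive pairs) is the same as the paper's, but the specific placement you propose for $A\cup A^*$ fails in half the cases. If $A\cup A^*$ sits entirely in $[m+1,2m-1]$ and $B$ entirely in $[1,m-1]$, then $W\cap[1,m-1]$ has exactly $m-1-s=(s/2)(\ell-2)$ elements, which is \emph{odd} precisely when $s\equiv 2\pmod{4}$, i.e.\ when $m$ is even. Since $m\notin D$, no pair of consecutive integers can straddle the hole at $m$, so in that case $X\cup Y$ cannot be partitioned into consecutive pairs at all --- and none of your repair moves helps, because shifting $B$ within the lower half, hooking the Skolem block, or sliding it within the upper half each changes the lower-half leftover by an even amount (or not at all). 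Your ``shuttle one element across the hole at $m$'' is the germ of the right fix, but as stated it destroys the consecutive block on which the Skolem/Langford pairing lives, and you do not explain how the required difference set $\{a^*-a\}$ is recovered afterwards. Already for $(\ell,m)=(5,6)$, $s=2$: the lower half $[1,5]$ minus $B$ has three elements, the upper half $[7,11]$ minus the four elements of $A\cup A^*$ has one, and $Y$ of size $4$ cannot be formed.

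The paper resolves exactly this point by splitting $A$ into two sub-blocks placed on either side of $m$, namely $A_0=[m-s/2,\,m-1]$ and $A_1=[m+s/2+1,\,m+s]$ with $A_i^*=A_i+\tau_i$, so that an odd number ($s/2$, when $m$ is even) of the elements of $A\cup A^*$ lands below $m$ and both $W\cap[1,m-1]$ and $W\cap[m+1,2m-1]$ become unions of even-length intervals. The bijection is then simply the reversal map on each sub-block (Lemma~\ref{lem:AA*}), which yields the even integers of $[1,s]$ from $A_0$ and the odd integers of $[1+2\epsilon,s+2\epsilon]$ from $A_1$, together the interval $[\epsilon+1,\epsilon+s]=B-\lambda$; no Skolem or Langford sequences and no case split on $s\bmod 4$ are needed. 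To complete your argument you would have to either adopt such a two-block placement straddling $m$, or otherwise move an odd number of non-$W$ elements below $m$ while still realizing the prescribed difference set --- the latter being the step your proposal currently leaves unresolved.
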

\begin{proof}
 Let $\epsilon=0$ or $1$ according to whether $\lambda$ is even or odd.
 Also, let $A= A_0\ \cup\ A_1$ and $A^*=(A_0 + \tau_0)\ \cup\ (A_1 + \tau_1)$, 
where $A_h$ and $\tau_h$ are the following: 
 \[
   \begin{array}{c|c|c}
     \hline \rule{0pt}{1\normalbaselineskip}
         &   A_h                & \tau_h  \\[0.5ex] 
         \hline \hline \rule{0pt}{1\normalbaselineskip}
     h=0 &   [-s/2, -1] + m      &  s/2 +1 \\[0.5ex]  
         \hline \rule{0pt}{1\normalbaselineskip}
     h=1 &   [s/2 +1, s] + m  & s/2 + 2\epsilon \\[0.5ex]  \hline 
   \end{array}
 \]
 and set $B=[\lambda  + \epsilon + 1, \lambda + \epsilon + s]$.
It is easy to check that $A_0$ and $A_1$ are disjoint, as are $A_0+\tau_0$ and $A_1 + \tau_1$; hence $|A|=|A^*|=|B|=s$.  We need to show that the sets $A$, $A^*$ and $B$ are pairwise disjoint.  It is straightforward to see that $A \cap A^*=\emptyset$.  To check that $B$ is disjoint from $A \cup A^*$, note that the elements of $A \cup A^*$ are contained in the interval $[m-\frac{s}{2}, m+\frac{3s}{2}+2\epsilon]$.  Thus, it suffices to show that $\lambda+\epsilon+s < m-\frac{s}{2}$, or equivalently, that $\lambda+\epsilon+\frac{3s}{2} < m$.  If $\ell=5$, 
\[
\lambda+\epsilon+\frac{3s}{2} = 1 + 1 + \frac{3(m-1)}{5} = \frac{3m+7}{5} < m
\]
since $m \geq \ell+1=6$.  For $\ell \geq 7$, since $\ell \leq m-1 < m$, we have
\begin{eqnarray*}
\lambda+\epsilon+\frac{3s}{2} &=& \frac{\ell-3}{2} + \epsilon + \frac{3(m-1)}{\ell} \\
& < & \frac{m-3}{2} + 1 + \frac{3(m-1)}{7} \\
&=& \frac{13(m-1)}{14} \\
&<& m
\end{eqnarray*}

 By Lemma \ref{lem:AA*} (with $I=[-\frac{s}{2},-1]$ or $[\frac{s}{2}+1,s]$, $J=\{1\}$ and $\mu=m$), there are bijections
 $a\in A_h \mapsto a^* \in A_h+\tau_h$, $h \in \{0,1\}$, such that
 \begin{align*}
   \{a^* - a\mid a\in A_0\} & 
     = \Big(\big[1, s\big]_o + m\Big) -m  +1
     = \big[1,s\big]_e;\\
   \{a^* - a\mid a\in A_1\} & 
     = \Big(\big[1, s\big]_o + m\Big) -m +2\epsilon
     = \big[1, s\big]_o+2\epsilon.
 \end{align*}  
 Therefore, $\{a^* - a\mid a\in A\} = 
 \big[\epsilon+1, \epsilon +s\big] = B-\lambda$. 

Set $W=D\setminus (A\ \cup\ A^*\ \cup\ B)$. Note that $|W| = (\ell-3)s$,
and both $W_1 = W \ \cap\ [1,m-1]$ and $W \ \cap\ [m+1,2m-1]$ 
are the disjoint union of intervals of even size. In particular, if $\ell\geq 7$ then
$|W_1| = m-1 -3s/2 = (\ell-3)s/2 \geq 2s$.
Therefore, $W$ can be seen as the disjoint union of two subsets $X$ and $Y$ each of which can be partitioned into pairs of consecutive integers, with $|X| = (\ell-5)s$, $|Y|=2s$, and $Y\subset W_1 \subset [1,m-1]$.
Therefore, the sets $A, A^*, B, X, Y$ provide the desired partition of $[1, 2m-1]\setminus\{m\}$.
\end{proof}

\begin{prop} \label{part size 4 nu}
Let $\ell \geq 5$ be odd, and let $m\equiv 1\pmod{\ell}$. 
Then there is a $(4m\nu, 4\nu, \sC_{\ell})$-DF for every odd $\nu\geq 1$.
\end{prop}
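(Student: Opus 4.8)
The plan is to reduce everything to the base case $\nu = 1$ and then blow it up. Concretely, I would first build a $(4m, 4, \sC_\ell)$-DF, and then apply the recursive Theorem~\ref{cc} with (in its notation) the number of parts equal to $m$, part size $w = 4$, $s = \nu$ and $u = 1$. This produces an $(4m\nu, 4\nu, \sC_\ell)$-DF, the hypothesis that $\ell(s-1) = \ell(\nu - 1)$ be even holding automatically since $\ell$ is odd and $\nu$ is odd. Thus the whole proposition collapses to the single case $\nu = 1$, and the real content is to construct a difference family on $\Z_{4m}$ realizing $\Z_{4m} \setminus m\Z_{4m} = \pm([1, 2m-1]\setminus\{m\})$, which requires exactly $s = 2(m-1)/\ell$ base cycles.

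For this base case I would invoke Lemma~\ref{lemma:5subsets:0}, whose difference set $[1,2m-1]\setminus\{m\}$ is precisely what must be realized. It supplies a partition into $A, A^*, B, X, Y$, a bijection $a \mapsto a^*$ with $B - \lambda = \{a^* - a \mid a \in A\}$, and partitions of $X$ and $Y$ into pairs of consecutive integers. For each $a \in A$ I would assign a block $X_a \subseteq X$ of $\ell - 5$ elements (a union of consecutive pairs) so that the $X_a$ partition $X$; assign a consecutive pair $\{y_a, y_a+1\} \subseteq Y$ so that these pairs partition $Y$; and let $b_a \in B$ be the unique element with $b_a - \lambda = a^* - a$. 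Feeding $d = a$, $d^* = a^*$ and $\mathbb{X} = X_a$ into Lemma~\ref{lemma:odd1} yields a path $P_a$ from $0$ to $p_{2\lambda} = (a^*-a) + \lambda - 1 = b_a - 1$ realizing exactly $\pm(\{a, a^*\} \cup X_a)$, with all interior vertices confined to the positive interval $[a^* - a + 1, a^* - a + \max X_a]$.

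I would then close each $P_a$ into an $\ell$-cycle by appending a length-$3$ path $Q_a$ from $b_a - 1$ back to $0$ realizing the three remaining differences $\pm\{b_a, y_a, y_a + 1\}$; concretely the choice $Q_a = (b_a - 1,\, -1,\, -1 - y_a,\, 0)$ works, its edge-differences being $-b_a$, $-y_a$ and $y_a + 1$. Running over $a \in A$, the cycles $C_a = P_a \cup Q_a$ realize $\pm(A \cup A^* \cup X \cup B \cup Y) = \pm([1, 2m-1]\setminus\{m\}) = \Z_{4m} \setminus m\Z_{4m}$, so $\{C_a \mid a \in A\}$ is the desired $(4m, 4, \sC_\ell)$-DF once we know each $C_a$ is a genuine cycle.

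The crux—and the main obstacle—is certifying that each $C_a$ is a \emph{simple} cycle, i.e.\ that the two new vertices $-1$ and $-1 - y_a$ of $Q_a$ are distinct from $0$, from $p_1 = -a$, and from the interior vertices of $P_a$. Here the position control of Lemma~\ref{lemma:odd1}(i)--(ii), together with $1 \notin A$ and $Y \subset [1, m-1]$ from Lemma~\ref{lemma:5subsets:0} (for $\ell \geq 7$), does the work: the interior vertices of $P_a$ are positive while $-1$ and $-1 - y_a$ are negative, so the span stays within $[-m, 2m]$ and no wraparound occurs modulo $4m$; the only genuine constraint reduces to $-1 - y_a \neq -a$, that is $y_a \neq a - 1$, which can be arranged using the freedom in matching the pairs of $Y$ to the elements of $A$. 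The small case $\ell = 5$ (where $X = \emptyset$ and the localization $Y \subset [1, m-1]$ is not asserted) should be verified directly, as $P_a$ degenerates to the two-edge path $(0, -a, a^* - a)$. This bookkeeping, rather than any single hard idea, is where the work lies.
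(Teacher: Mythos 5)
Your proposal matches the paper's proof essentially step for step: reduce to $\nu=1$ via Theorem~\ref{cc}, partition $[1,2m-1]\setminus\{m\}$ via Lemma~\ref{lemma:5subsets:0}, build the paths $P_a$ via Lemma~\ref{lemma:odd1}, and close each with a $3$-path carrying the differences $\pm\{b_a, y_a, y_a\pm 1\}$, then check distinctness of vertices modulo $4m$. The only (cosmetic) difference is your closing path $(p_a,\,-1,\,-1-y_a,\,0)$ versus the paper's $Q_a = 0,\,-y_a,\,-1,\,p_a$; note that the condition you flag as needing to be ``arranged'' ($y_a \neq a-1$, i.e.\ $a \neq y_a+1$) is in fact automatic from $A \cap Y = \emptyset$, just as the paper's variant needs only $1\notin A$ and $A\cap Y=\emptyset$.
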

\begin{proof} By Theorem~\ref{cc}, it is enough to prove the assertion when $\nu=1$. 

First, if $\ell=m-1=5$, take  $\mathcal{C}=\{(0,11,1,10,2), (0,7,2,5,1)\}$. Since 
$\Delta \mathcal{C} = \Z_{24}\setminus\{0,6,12,18\}$, then $\mathcal{C}$
is a $(24,4,C_5)$-DF. We can therefore 
assume that $(\ell,m)\neq (5,6)$. 

As in Lemma~\ref{lemma:5subsets:0}, let $\lambda=(\ell-3)/2$ and $s=2(m-1)/\ell$.  
By that lemma, there is a partition of
$D=[1, 2m-1]\setminus \{m\}$ into five subsets
$A$, $A^*$, $B$, $X$ and  $Y$ which satisfy the following conditions:
\begin{enumerate}
  \item $|A|=|A^*|=|B|= s$, $|X|= (\ell-5)s$,  $|Y| = 2s$;
  \item there is a bijection $a\in A\mapsto a^* \in A^*$ such that 
  $B-\lambda = \{a^*-a\mid a\in A\}$;
  \item $X$ and $Y$ can be partitioned into pairs of consecutive integers.   
  \item $1\not\in A$, $B-\lambda\subset[1, s+1]$, and $Y\subset [1, m-1]$ when $\ell\geq 7$.
\end{enumerate}
In particular, $X$ can be seen as the disjoint union of $s$ sets $X_a$ of size $\ell-5$, 
indexed over the elements of $A$, 
each of which can be partitioned into pairs of consecutive integers, and
$Y = \{y_{a}, y_{a} -1 \mid a\in A\}$.

We will construct a set $\cal C$ of $s=2(m-1)/\ell$ base cycles, indexed over the elements of $A$, and each obtained as a union of two paths of length $\ell-3$ and $3$. 
By applying Lemma \ref{lemma:odd1} 
(with $d=a \in A$ and $\mathbb{X}=X_a$), we construct the  path $P_a$ of length $2\lambda = \ell-3$
such that
\begin{eqnarray}
  & \label{0Pa:ends}&\text{the ends of $P_a$ are $0$ and $p_{a}= a^*- a + \lambda-1$,}\\
  & \label{0Pa:vertices}&\text{$V(P_a)\subseteq \{0,-a\}\ \cup\ [a^*-a, a^*-a + \max X_a]$}, \\
  & \label{0Pa:delta}&\text{$\Delta P_a = \pm \{a, a^*\}\ \cup\ \pm X_a$,}
\end{eqnarray}
where $\max X_a=0$ when $\ell=5$.
For $a\in A$, let $C_a$ be the closed trail obtained by joining $P_a$ and 
the $3$-path $Q_a = 0,\; -y_a,\; -1,\; p_a$,
and considering its vertices as elements of $\mathbb{Z}_{4m}$. 
We claim that $\mathcal{C} = \{C_a\mid a\in A\}$, is the desired difference family.

We first show that $\Delta \mathcal{C} = \pm D$. 
Recalling \eqref{0Pa:delta} and that $B =\{a^* -a +\lambda \mid a\in A\} = \{p_a+1\mid a\in A\}$,
and considering  that $\Delta Q_a = \pm \{y_{a}, y_{a}-1, p_a+1\}$,
then 
\begin{align*}
  \Delta \mathcal{C} &= \bigcup_{a\in A} \Delta C_a =
  \bigcup_{a\in A} (\Delta P_a\ \cup\ \Delta Q_a)  
   =\pm \bigcup_{a\in A} 
   \left(X_a \ \cup\ \{a, a^*, y_{a}, y_{a}-1, p_a+1\}\right)\\
   & =\pm (D\setminus B) \ \cup\ \pm\{p_a+1\mid a\in A\} = \pm D.
\end{align*}

It is left to show that each $C_a$ is a cycle.  Since $a^*-a\in B-\lambda\subset [1, s+1]$ where $s=2(m-1)/\ell < m$, and
$\max X_a<2m$,  
it follows by \eqref{0Pa:vertices} that 
\[
V(P_a) \subseteq \{-a\}\ \cup\ [0,2m+s] \subseteq \{-a\} \cup [0, 3m-1]. 
\]
But $y_a \in Y \subset [1,m-1]$, so it follows that $P_a$ and $Q_a$ share a vertex other than $0$ or $p_a$ modulo $4m$ if and only if $-a \in \{-1, -y_a\}$. Recalling that $1\not\in A$ and $A\ \cap\ Y$ is empty, we see that the latter condition is not satisfied; thus, $P_a$ and $Q_a$ only share their end-vertices modulo $4m$.
Hence $C_a$ is a cycle for every $a\in A$,
and this completes the proof.
\end{proof}

\begin{ex}
Let $\ell=9$ and $m=10$; we have to build a 
$(40, 4, \sC_{9})$-DF, so we need $q=2$ base cycles. Here $\lambda=3$ and, following 
the proof of Lemma \ref{lemma:5subsets:0}, we have that
\[A_0=\{9\}, A_1=\{12\}, A^*_0=\{11\}, A^*_1=\{15\}, B=\{5,6\},
\] 
and  the index set is $A=\{9,12\}$. Also, 
we can take $Y=[1,4]$, $X_9=\{7,8,13,14\}$ and $X_{12}=[16,19]$. 

The path $P_9$ is the path  $0,-9,2,16,3,11,4$, and we might take $y_9=2$ so that $Q_9= 0,-2,-1,4$, and 
\[
C_9 = P_9\ \cup\ Q_9 = (0,-9,2,16,3,11,4,-1,-2),
\] 
while $P_{12}$ is the path $0,-12,3,22,4,21,5$ and $Q_{12}$ is $0,-4,-1,5$ so that 
\[
 C_{12} = P_{12}\ \cup\ Q_{12} = (0,-12,3,22,4,21,5,-1,-4).
\]
It is easily checked that $\Delta C_9 \cup \Delta C_{12}=\Z_{40}\setminus{10\cdot\Z_{40}}$.
\end{ex}

\begin{lemma}\label{lemma:5subsets:1} 
Let $\ell=2\lambda+3 \geq 5$ be odd, let $m\equiv 1\pmod{2\ell}$ and set $s=4(m-1)/\ell$. 
Then for every integer $\nu \geq 1$, there exists a partition of $D=[1, 4m\nu]\setminus ([1,4\nu]\cdot m)$ into five subsets
$A, A^*, B, X$, and $Y$ satisfying the following properties:
\begin{enumerate}
  \item $|A|=|A^*|=|B|=\nu s$, $|X|= (\ell-5)\nu s$,  $|Y| = 2\nu s$;
  \item there is a bijection $a\in A\mapsto a^* \in A^*$ such that 
  $B-\lambda = \{a^*-a\mid a\in A\}$;
  \item $X$ and $Y$ can be partitioned into pairs of consecutive integers.   
  \item $1\not\in A$, $B-\lambda\subset[1,(2\nu-1)m]$, and $Y\subset [1, (2\nu+1)m-1]$ when $\ell\geq 7$.
\end{enumerate}
\end{lemma}
\begin{proof}
 Let $\epsilon=0$ or $1$ according to whether $\lambda$ is even or odd,
  and set $q=s\nu $.
 
 We start by defining intervals $I_h, J_h$ and integers $\tau_{h}$, for $h\in\{0,1,2\}$, 
 as follows:
 \[
   \begin{array}{c|c|c|c}
     \hline \rule{0pt}{1\normalbaselineskip}
         &   I_h       & J_h               & \tau_h  \\[0.5ex] 
         \hline \hline \rule{0pt}{1\normalbaselineskip}
     h=0 &   [1, s/2]  & [2\nu-1, 3\nu-2]  & (\nu-1)m +s/2 +2\epsilon \\[0.5ex]  
         \hline \rule{0pt}{1\normalbaselineskip}
     h=1 & \multirow{2}{4.5em}{$[-s/2, -1]$}  & [2\nu, 3\nu-2]    & {\nu}m   +s/2 +1 \\ [0.5ex] 
         \cline{1-1}  \cline{3-4} \rule{0pt}{1\normalbaselineskip} 
      
     h=2 &             & \{4\nu-1\}        & s/2 +1    \\[0.5ex]  \hline 
   \end{array}
 \]
 For every $h\in\{0,1,2\}$, set $A_{h} = I_h + J_h\cdot m$, 
 $A_{h}^* = A_{h} + \tau_h$, 
 and let
 \[
 A = \bigcup_{h=0}^{2} A_h \;\;\;\mbox{and}\;\;\; 
 A^* = \bigcup_{h=0}^{2} A^*_h.
 \]
 Also, set $B=[\lambda + \epsilon + 1, \lambda + \epsilon + s] + [0, \nu-1]\cdot 2m$.
 It is not difficult to check that the sets $A_0$, $A_1$, $A_2$, $A^*_{0}$, $A^*_{1}$, $A^*_2$ and $B$ are pairwise disjoint;
 hence $|A|=|A^*|=|B|=\nu s$.
 By Lemma \ref{lem:AA*}, there is a bijection
 $a\in A_h \mapsto a^* \in A^*_h$  such that
 \begin{align*}
   \{a^* - a\mid a\in A_0\} & 
     = \Big(\big[1, s\big]_o + \big[1, 2\nu\big]_o   \cdot m\Big) -m + 2\epsilon 
     = \big[1+ 2\epsilon, s+ 2\epsilon\big]_o + \big[0, 2\nu-2\big]_e \cdot m;\\
   \{a^* - a\mid a\in A_1\} & 
     = \Big(\big[1, s\big]_o + \big[1, 2\nu-2\big]_o \cdot m\Big) + m +1
     = \big[1, s\big]_e + \big[1, 2\nu-2\big]_e \cdot m;\\
   \{a^* - a\mid a\in A_2\} & 
     = \Big(\big[1, s\big]_o + m\Big) -m + 1 = \big[1, s\big]_e.
 \end{align*}  
 Therefore, $\{a^* - a\mid a\in A\} = 
 \big[\epsilon+1, \epsilon +s\big] + \big[0, \nu-1\big] \cdot 2m = B-\lambda$. 

Now set $W=D\setminus (A\ \cup\ A^*\ \cup\ B)$ and note that $|W| = (\ell-3)s\nu$.
Also, for every $j\in[0, 4\nu-1]$ we have that $([1,m-1]+jm)\ \cap\ W$ is the disjoint union of intervals of even size. 
%
%
Therefore, $W$ can be seen as the disjoint union of two subsets $X$ and $Y$, each of which can be partitioned into pairs of consecutive integers,
with $|X| = (\ell-5)\nu s$ and $|Y| = 2s\nu$.
Since   
$\big|(A\cup A^*\cup B) \cap [1, (2\nu+1)m]\big| = (\nu+2)s$, for every $\ell\geq 7$ we have that 
\begin{align*}
  \big|W \cap [1, (2\nu+1)m]\big| &= (2\nu+1)(m-1) - (\nu+2)s = \ell s(2\nu+1)/4 - (\nu+2)s \\
  &= (\ell-2)s\nu/2 +(\ell-8)s/4 \geq 2s\nu + s\nu/2 -s/4 \geq   2s\nu.
\end{align*}
Therefore, without loss of generality, we can assume that
$Y\subset[1, (2\nu+1)m-1]$ when $\ell\geq 7$, and this completes the proof.
\end{proof}

\begin{prop} \label{part size 8 nu - 1 mod 2ell}
Let $\ell \geq 5$ be odd, and let $m\equiv 1\pmod{2\ell}$. 
Then there is a $(8m\nu, 8\nu, \sC_{\ell})$-DF for every $\nu\geq 1$.
\end{prop}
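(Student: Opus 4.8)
The plan is to construct an $(8m\nu, 8\nu, \sC_{\ell})$-DF directly using the same path-joining strategy employed in the proof of Proposition~\ref{part size 4 nu}, but now relying on the partition provided by Lemma~\ref{lemma:5subsets:1} (the $m\equiv 1\pmod{2\ell}$ analogue) rather than Lemma~\ref{lemma:5subsets:0}. As in that earlier proof, I would first invoke Lemma~\ref{lemma:5subsets:1} to partition the difference set $D=[1,4m\nu]\setminus([1,4\nu]\cdot m)$ into the five pieces $A,A^*,B,X,Y$ with the stated sizes and the crucial bijection $a\mapsto a^*$ satisfying $B-\lambda=\{a^*-a\mid a\in A\}$, where $\lambda=(\ell-3)/2$ and $s=4(m-1)/\ell$. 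The number of base cycles is $q=\nu s=(8m\nu-8\nu)/(2\ell)$, matching the count required for a DF on $K_m[8\nu]$.

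Next I would build the base cycles exactly as before. Writing $X$ as a disjoint union of $q$ sets $X_a$ of size $\ell-5$, each partitionable into pairs of consecutive integers, and writing $Y=\{y_a,y_a-1\mid a\in A\}$, I would apply Lemma~\ref{lemma:odd1} (with $d=a$, $d^*=a^*$, $\mathbb{X}=X_a$) to produce for each $a\in A$ a path $P_a$ of length $\ell-3$ with endpoints $0$ and $p_a=a^*-a+\lambda-1$, satisfying $\Delta P_a=\pm\{a,a^*\}\cup\pm X_a$ and the vertex containment $V(P_a)\subseteq\{0,-a\}\cup[a^*-a,\,a^*-a+\max X_a]$. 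Joining $P_a$ to the $3$-path $Q_a=0,-y_a,-1,p_a$ gives a closed trail $C_a$ in $\Z_{8m\nu}$, and the same difference bookkeeping as in Proposition~\ref{part size 4 nu} shows $\Delta\{C_a\mid a\in A\}=\pm D=\Z_{8m\nu}\setminus m\Z_{8m\nu}$, since $B=\{p_a+1\mid a\in A\}$ and $\Delta Q_a=\pm\{y_a,y_a-1,p_a+1\}$.

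The main obstacle, and the only place where the argument genuinely differs from the $4\nu$ case, is verifying that each $C_a$ is actually a cycle rather than merely a closed trail — i.e. that $P_a$ and $Q_a$ meet only at their shared endpoints $0$ and $p_a$ modulo $8m\nu$. Here the modulus is $8m\nu$ and the vertices of $P_a$ can spread across a much larger range than in Proposition~\ref{part size 4 nu}, so I would need the refined bounds from part~(4) of Lemma~\ref{lemma:5subsets:1}, namely $B-\lambda\subset[1,(2\nu-1)m]$ and $Y\subset[1,(2\nu+1)m-1]$ for $\ell\geq 7$. These guarantee $a^*-a\leq (2\nu-1)m$ and $\max X_a<4m\nu$, so that $V(P_a)\subseteq\{-a\}\cup[0,(2\nu+1)m-1]$ roughly, comfortably inside $[0,8m\nu-1]$, and that $y_a$ lies below the bulk of $P_a$'s interior vertices. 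A collision would then force $-a\in\{-1,-y_a\}$ modulo $8m\nu$, which is excluded by $1\notin A$ and $A\cap Y=\emptyset$ from the lemma.

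Finally, I would dispose of the small exceptional cases separately. Since Lemma~\ref{lemma:5subsets:1} and the interval estimates in part~(4) are stated for $\ell\geq 7$, the case $\ell=5$ needs its own treatment (as in Proposition~\ref{part size 4 nu}, which handled $(\ell,m)=(5,6)$ by an explicit family), and I would either exhibit explicit base cycles or check that the generic construction still produces genuine cycles when $X_a=\emptyset$ and $\max X_a$ is interpreted as $0$. Once all $C_a$ are confirmed to be cycles, $\cC=\{C_a\mid a\in A\}$ is the desired $(8m\nu,8\nu,\sC_{\ell})$-DF, completing the proof.
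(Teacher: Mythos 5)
Your proposal matches the paper's proof essentially step for step: the same appeal to Lemma~\ref{lemma:5subsets:1} for the five-part partition of $D$, the same paths $P_a$ from Lemma~\ref{lemma:odd1} joined to $Q_a=0,-y_a,-1,p_a$, the same difference bookkeeping via $B=\{p_a+1\mid a\in A\}$, and the same use of condition~4 to rule out collisions (the paper bounds the interior vertices of $P_a$ by $(6\nu-1)m-1$ rather than your rougher $(2\nu+1)m-1$, but the conclusion that a collision forces $-a\in\{-1,-y_a\}$ is identical). For $\ell=5$ the paper does exactly what your second option suggests --- it observes $V(P_a)=\{0,-a,a^*-a\}$ and checks directly; no explicit exceptional families are needed here since $m\equiv 1\pmod{2\ell}$ forces $m\geq 2\ell+1$.
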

\begin{proof}
Set $\lambda=(\ell-3)/2$ and let $\epsilon\in\{0,1\}$, with $\epsilon\equiv \lambda \pmod{2}$. Also,
set $s=4(m-1)/\ell$ and note that the number of required base cycles is $q={\nu} s$. 

By Lemma \ref{lemma:5subsets:1}, there is a partition of
$D=[1, 4m\nu]\setminus ([1,4\nu]\cdot m)$ into five subsets
$A$, $A^*$, $B$, $X$ and  $Y$ which satisfy the following conditions:
\begin{enumerate}
  \item $|A|=|A^*|=|B|=\nu s$, $|X|= (\ell-5)\nu s$,  $|Y| = 2\nu s$;
  \item there is a bijection $a\in A\mapsto a^* \in A^*$ such that 
  $B-\lambda = \{a^*-a\mid a\in A\}$;
  \item $X$ and $Y$ can be partitioned into pairs of consecutive integers;  
  \item $1\not\in A$, $B-\lambda\subset[1,(2\nu-1)m]$, and $Y\subset [1, (2\nu+1)m-1]$ when $\ell\geq 7$.
\end{enumerate}
In particular, $X$ can be seen as the disjoint union of $q$ sets $X_a$ of size $\ell-5$, indexed over the elements of $A$, 
each of which can be partitioned into pairs of consecutive integers, and
$Y = \{y_{a}, y_{a} -1 \mid a\in A\}$.
By applying Lemma \ref{lemma:odd1} 
(with $d=a \in A$ and $\mathbb{X}=X_a$), we construct a path $P_a$ of length $2\lambda = \ell-3$
such that
\begin{eqnarray}
  & \label{Pa:ends}&\text{the ends of $P_a$ are $0$ and $p_a= a^*- a + \lambda-1$,}\\
  & \label{Pa:vertices}&\text{$V(P_a)\subseteq \{0,-a\}\ \cup\ [a^*-a, a^*-a + \max X_a]$}, \\
  & \label{Pa:delta}&\text{$\Delta P_a = \pm \{a, a^*\}\ \cup\ \pm X_a$},
\end{eqnarray}
where $\max X_a=0$ when $\ell=5$. 
For $a\in A$, let $C_a$ be the closed trail obtained by joining $P_a$ and 
the $3$-path $Q_a = 0,\; -y_a,\; -1,\; p_a$,
and considering its vertices as elements of $\mathbb{Z}_{8m\nu}$. 
We claim that $\mathcal{C} = \{C_a\mid a\in A\}$, is the desired difference family. 

We first show that $\Delta \mathcal{C} = \pm D$. 
Recalling \eqref{Pa:delta} and that $B =\{a^* -a +\lambda \mid a\in A\} = \{p_a+1\mid a\in A\}$,
and considering  that $\Delta Q_a = \pm \{y_{a}, y_{a}-1, p_a+1\}$,
it follows that 
\begin{align*}
  \Delta \mathcal{C} &= \bigcup_{a\in A} \Delta C_a =
  \bigcup_{a\in A} (\Delta P_a\ \cup\ \Delta Q_a)  
   =\pm \bigcup_{a\in A} 
   \left(X_a \ \cup\ \{a, a^*, y_{a}, y_{a}-1, p_a+1\}\right)\\
   & =\pm (D\setminus B) \ \cup\ \pm\{p_a+1\mid a\in A\} = \pm D.
\end{align*}

It is left to show that each $C_a$ is a cycle. 
By \eqref{Pa:vertices}, if $\ell=5$, then  
$V(P_a) = \{0, a, p_a=a^*-a\}$. By recalling that 
$A,Y\subset [1, 4m\nu -1]$, it follows that 
$a\not\in\{-y_a, -1\}$, hence $C_a$ is a cycle. 
Again by \eqref{Pa:vertices}, if $\ell\geq7$, then
$V(P_a) \subseteq \{0,-a\}\ \cup\ [a^*-a, a^*-a + \max X_a]$. 
Recalling conditions 2 and 4, we have that
$a^*-a\in B-\lambda\subset[1, (2\nu-1)m]$ for every $a\in A$. Since
$\max X_a < 4 m \nu$,
then $V(P_a) \subseteq \{0,-a\} \ \cup\ [1, (6\nu-1)m-1]$ for every $a\in A$. 
Recalling that $1\not\in A$ and $Y\subset [1, (2\nu+1)m-1]$ (condition 4), and that $A\cap Y$ is empty, it follows that $\{-1, -y_a\}\not\in V(P_a)$, that is,
$P_a$ and $Q_a$ only share their end-vertices. 
Hence $C_a$ is a cycle for every $a\in A$,
and this completes the proof.
\end{proof} 

\begin{ex}
Let $\ell=9$, $m=19$ and $n=8$, so that $\nu=1, \lambda=3, \epsilon=1, s=8$, 
and the size of our $(152, 8, \sC_{9})$-DF will be $q=\nu s = 8$.
Following the proof of Lemma \ref{lemma:5subsets:1},  we have that
\[A_0=[20,23], A_2=[53,56], A^*_0=[26,29], A^*_2=[58,61], A_1 = A_1^* = \varnothing,B=[5,12],
\]
and our index set is $A=[20,23]\cup [53,56]$. 

We can take for instance 
\[
Y=[1,4]\ \cup\ [13,18]\ \cup\ [24,25]\ \cup\ [30,33],
\] 
and the remaining differences in $D=[1, 152]\setminus \big([1,8]\cdot 19\big)$ can be partitioned to form the eight 4-sets $X_a, a\in A$:
\begin{align*}
  & X_{20} =[34,37], X_{21} =[39,42], X_{22} =[43,46], X_{23} =[47,50],\\
  & X_{53} = \{51,52, 62,63\}, X_{54} =[64,67], X_{55} =[68,71], X_{56} =[72,75],  
\end{align*} 
Following the proof of Proposition \ref{part size 8 nu - 1 mod 2ell}, the paths we can get with this partition of $D$  are
\begin{align*}
P_{20} &=(0,-20,9,46,10,45,11) &\quad   & Q_{20}=(0, -2,-1,11)\\
P_{21} &=(0,-21,7,49,8,48,9)   &\quad   & Q_{21}=(0, -4,-1,9)\\
P_{22} &=(0,-22,5,51,6,50,7)   &\quad   & Q_{22}=(0,-14,-1,7)\\
P_{23} &=(0,-23,3,53,4,52,5)   &\quad   & Q_{23}=(0,-16,-1,5) \\
P_{53} &=(0,-53,8,71,9,61,10)  &\quad   & Q_{53}=(0,-18,-1,10)\\
P_{54} &=(0,-54,6,73,7,72,8)   &\quad   & Q_{54}=(0,-25,-1,8)\\
P_{55} &=(0,-55,4,75,5,74,6)   &\quad   & Q_{55}=(0,-31,-1,6)\\
P_{56} &=(0,-56,2,77,3,76,4)   &\quad   & Q_{56}=(0,-33,-1,4)
\end{align*}
and joining them will give us the eight cycles making up the required difference family.
\end{ex}

\begin{lemma}\label{lemma:5subsets:2} 
Let $\ell=2\lambda + 3 \geq 5$ be odd, let $m\equiv \ell+1\pmod{2\ell}$ and set $s=4(m-1)/\ell$.
Then for any integer $\nu \geq 1$, there exists a partition of $[1, 4m\nu]\setminus ([1,4\nu]\cdot m)$ into nine subsets $X$ and
$A_i, A_i^*, B_i, Y_i$, for $i\in\{0,1\}$, which satisfy the following properties:
\begin{enumerate}
  \item $|X|= (\ell-5)\nu s$, $|A_i|=|A_i^*|=|B_i|=\frac{\nu s}{2}$,   $|Y_i| = \nu s$;
  \item there is a bijection $a\in A_i\mapsto a^* \in A_i^*$ such that 
  $B_i -\lambda +i -1 = \{a^*-a \mid a\in A_i\};$
  \item $X$ and $Y_1$ can be partitioned into pairs of consecutive integers;
  \item $Y_0$ can be partitioned into pairs at distance 2;
  \item $1\not\in A_0\cup A_1$, $B_i-\lambda +i -1\subset[2, 2m\nu-1]$, and $Y_0\ \cup\ Y_1\subset [1, 2m\nu-1]$   when  $\ell\geq 7$.    
\end{enumerate}
\end{lemma}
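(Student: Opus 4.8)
The plan is to mirror the proof of Lemma~\ref{lemma:5subsets:1}, adapting the construction to even $m$. Since $m\equiv \ell+1\pmod{2\ell}$ with $\ell$ odd, we have $m-1=\ell(2k+1)$ for some $k\geq 0$; in particular $m$ is even and $s=4(m-1)/\ell$ is a multiple of $4$, so the claimed cardinality $\nu s/2$ is an integer. As in Lemma~\ref{lemma:5subsets:1} I would define several intervals $I_h,J_h$ together with shifts $\tau_h$, set $A_h=I_h+J_h\cdot m$ and $A_h^*=A_h+\tau_h$, and invoke Lemma~\ref{lem:AA*} to express each difference set $\{a^*-a\mid a\in A_h\}$ as a union of arithmetic progressions in the odd and even residues modulo $m$. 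Grouping the families into the two classes $i\in\{0,1\}$, I would arrange the union of these progressions to equal $B_i-\lambda+i-1$, which is property~(2). The shift by $i-1$ is the only genuinely new bookkeeping relative to Lemma~\ref{lemma:5subsets:1}: choosing the shifts $\tau_h$ for the class $i=0$ one unit larger forces $B_0=\{p_a+2\mid a\in A_0\}$ while $B_1=\{p_a+1\mid a\in A_1\}$, where $p_a=a^*-a+\lambda-1$ is the endpoint produced by Lemma~\ref{lemma:odd1}.

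The reason the even case needs the extra set $Y_0$ of distance-$2$ pairs is a parity obstruction that is absent when $m$ is odd. Each block $[jm+1,(j+1)m-1]$ lying strictly between two consecutive deleted multiples of $m$ now has \emph{odd} length $m-1$, so the residual set $W=D\setminus\big(\bigcup_i A_i\cup A_i^*\cup B_i\big)$ cannot be tiled by consecutive pairs alone. In the proposition that follows, the two classes are realised by two different closing $3$-paths: $Q_a=0,-y_a,-1,p_a$ for $a\in A_1$, whose list of differences $\{y_a,y_a-1,p_a+1\}$ feeds the consecutive-pair set $Y_1$ and $B_1$; and $Q_a=0,-y_a,-2,p_a$ for $a\in A_0$, whose differences $\{y_a,y_a-2,p_a+2\}$ feed the distance-$2$ set $Y_0$ and $B_0$. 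Thus the distance-$2$ pairs are exactly the device absorbing the defect from the odd block length: a pair such as $\{jm-1,jm+1\}$ (two odd integers straddling the deleted even multiple $jm$) can bridge a gap and reconnect two odd-length blocks into an evenly tileable whole.

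With the families fixed, the remaining work splits into two pieces. First I would verify that $A_0,A_1,A_0^*,A_1^*,B_0,B_1$ are pairwise disjoint; as in Lemma~\ref{lemma:5subsets:0} this reduces to confining $A\cup A^*$ to a short interval around the relevant multiples of $m$ and checking an inequality of the form $\lambda+\epsilon+\tfrac{3s}{2}<m$, now using $m\geq \ell+1$ and $m-1=\ell(2k+1)$. The main obstacle is the second piece: showing that $W$ decomposes as a disjoint union $X\cup Y_1\cup Y_0$ with $|X|=(\ell-5)\nu s$ and $|Y_0|=|Y_1|=\nu s$, with $X,Y_1$ partitioned into consecutive pairs, $Y_0$ into pairs at distance $2$, and $Y_0\cup Y_1\subset[1,2m\nu-1]$. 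This is where the odd block lengths must be tracked block by block: I would position the elements deleted for $A,A^*,B$ so that, after also peeling off the bridging distance-$2$ pairs forming $Y_0$, every maximal run of what remains has even length and hence tiles into consecutive pairs. Finally, a counting estimate of the type used to close Lemma~\ref{lemma:5subsets:1} would guarantee that enough of $W$ sits in the lower half $[1,2m\nu-1]$ to accommodate all of $Y_0\cup Y_1$, with the degenerate case $\ell=5$ (where $X=\emptyset$) and the boundary blocks treated separately.
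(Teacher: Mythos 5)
Your high-level plan coincides with the paper's: define sets of the form $A_h = I_h + J_h\cdot m$, $A_h^* = A_h + \tau_h$, extract the difference sets via Lemma~\ref{lem:AA*}, group them into two classes matching $B_i - \lambda + i - 1$, and use the distance-$2$ set $Y_0$ to absorb the parity defect caused by the blocks $[jm+1,(j+1)m-1]$ having odd size $m-1$ when $m$ is even. You have also correctly anticipated how the two classes will be consumed by the two closing $3$-paths in Proposition~\ref{part size 8 nu - ell+1 mod 2ell}. However, the proposal defers precisely the step that constitutes the lemma: no intervals $I_h, J_h$, no shifts $\tau_h$, and no explicit $B_0$, $B_1$, $Y_0$ are exhibited, and the assertion that one can ``position the elements \dots so that every maximal run of what remains has even length'' is exactly the combinatorial claim that has to be proved. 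As it stands the argument establishes that a construction of this shape \emph{would} suffice, not that one exists; the cardinality and feasibility checks (disjointness, the residues of $\{a^*-a\}$ tiling $B_i-\lambda+i-1$ exactly, and the block-by-block parity) all hinge on the concrete choices you have not made.

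One concrete point where your envisioned mechanism diverges from what actually works in the paper: you picture $Y_0$ as pairs $\{jm-1,jm+1\}$ straddling the deleted multiples of $m$ and ``bridging'' adjacent odd-length blocks. The paper instead keeps $Y_0$ entirely inside blocks and exploits interleaving: it sets $B_0 = \big([3,s+1]_o + [0,2\nu-2]_e\cdot m\big)+\lambda$, $B_1 = \big([0,s-2]_e + [1,2\nu-1]_o\cdot m\big)+\lambda$ and $Y_{0,i} = B_i + (-1)^{i+1}(2\epsilon-1)$, so that each $B_i$ is an every-other-element set and $B_i \cup Y_{0,i}$ fills out a solid interval of even length; removing $B_i \cup Y_{0,i}$ from a block therefore leaves runs of even size, while $Y_{0,i}$, being a translate of an arithmetic progression with common difference $2$, splits into distance-$2$ pairs. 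Your bridging idea is not obviously wrong, but it would force you to re-balance the parity of \emph{every} block through a limited supply of inter-block positions and to re-verify condition~5 ($Y_0 \subset [1,2m\nu-1]$) and the counting bound for $Y_1$; if you pursue this lemma, adopting the interleaving device (or supplying an equally explicit alternative) is the missing core of the proof.
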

\begin{proof}
 Let $\epsilon=0$ or $1$ according to whether $\lambda$ is even or odd,
 and set $q=s\nu $.
 
 For every $h\in\{0,1,2\}$, set $A'_{h} = I_h + J_h\cdot m$, where the intervals $I_h, J_h$ and the integer  $\tau_{h}$ are defined as follows:
 \[
   \begin{array}{c|c|c|c}
     \hline \rule{0pt}{1\normalbaselineskip}
         &   I_h         & J_h               & \tau_h  \\[0.5ex] 
         \hline \hline \rule{0pt}{1\normalbaselineskip}
     h=0 & [-s/2, -1]    & [2\nu+1, 3\nu]    & (\nu-1)m   +s/2 +1 \\ [0.5ex] 
         \hline\rule{0pt}{1\normalbaselineskip}          
     h=1 &   [1, s/2-1]  & [2\nu, 3\nu-1]    & {\nu} m +s/2  \\[0.5ex]  
         \hline \rule{0pt}{1\normalbaselineskip}      
     h=2 &   \{-1\}      & [1,\nu]        & {\nu} m    \\[0.5ex]  \hline 
   \end{array}
 \]
 Also, let $B_0, B_1$, and $Y_0$ be the sets defined below:
 \begin{enumerate}
  \item[] $B_0 = \big(\big[3, s+1\big]_o   + \big[0, 2\nu-2\big]_e   \cdot m \big) + \lambda$
  \item[] $B_1 = \big(\big[0, s-2\big]_e   + \big[1, 2\nu-1\big]_o   \cdot m \big) + \lambda$,
  \item[] $Y_0 = Y_{0,0}\ \cup\ Y_{0,1}$, where $Y_{0,i} = B_i + (-1)^{i+1}(2\epsilon-1)$
 for $i\in\{0,1\}$. 
 \end{enumerate}
 It is not difficult to check that the sets $A'_h$ ($h \in \{0,1,2\}$), $A'_{k} + \tau_{k}$ ($k \in \{0,1,2\}$), $B_i$ ($i \in \{0,1\}$) and $Y_0$ are pairwise disjoint. 
 We denote by $W'$ their union, and note that
 $|A'_0|={\nu} s/2$, $|A'_1|=\nu (s/2-1)$, $|A'_2| = \nu$,  $|B_0|=|B_1|={\nu} s/2$, and $|Y_0|={\nu} s$;
 hence $|W'|=4{\nu} s$.
 
 Now set $W=D\setminus W'$ and note that $|W| = (\ell-4)s\nu$.
 Also, for every $j\in[0, 4\nu-1]$, it is not difficult to check that 
 $([1,m-1]+jm)\ \cap\ W$ is the disjoint 
 union of intervals of even size. Therefore, $W$ can be seen as the disjoint union of two subsets 
 $X$ and $Y_1$ each of which can be partitioned into pairs of consecutive integers,
 with $|X| = (\ell-5)\nu s$ and $|Y_1| = s\nu$.
 
 By construction, $Y_0\subset [1, 2m\nu -1]$ and it can be partitioned into pairs at distance $2$. Also, 
 since $\big|W' \cap [1, 2m\nu-1]\big| = 2\nu(s+1)$, we have that 
\begin{align*}
  \big|W \cap [1, 2m\nu-1]\big| &= 2\nu(m-1) - 2\nu(s+1) = 2\nu\big((\ell-4)s/4-1\big) \\
  &= s\nu \big((\ell-4)/2 - 2/s\big) \geq s\nu,
\end{align*}
when $\ell\geq 7$, in which case we can assume that $Y_1\subset[1, 2m\nu-1]$.

Finally, by Lemma \ref{lem:AA*}, there is a bijection
 $a\in A'_h \mapsto a^* \in A'_h+\tau_h$  such that
 \[
   \{a^* - a\mid a\in A'_h\} = 
   \begin{cases}
     \big[2, s\big]_e   + \big[0, 2\nu-2\big]_e   \cdot m  & \text{if $h=0$}, \\
     \big[2, s-2\big]_e + \big[1, 2\nu-1\big]_o   \cdot m  & \text{if $h=1$}, \\
     [1,2\nu-1]_o \cdot m                                  & \text{if $h=2$}.      
   \end{cases}
 \]
 Setting $A_0 = A'_0$, $A_0^* = A'_0+{\tau_0}$,  $A_1 = A'_1\ \cup\ A'_2$, and
 $A_1^* = (A'_1 +\tau_1)\ \cup\ (A'_2 + \tau_2)$, one can easily check that Condition 2 is satisfied, 
 and this completes the proof.
\end{proof}

\begin{prop} \label{part size 8 nu - ell+1 mod 2ell}
Let $\ell \geq 5$ be odd, and let $m\equiv \ell+1\pmod{2\ell}$. 
Then there is an $(8m\nu, 8\nu, \sC_{\ell})$-DF for every $\nu\geq 1$.
\end{prop}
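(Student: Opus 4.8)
The plan is to mirror the argument of Proposition~\ref{part size 8 nu - 1 mod 2ell}, replacing the appeal to Lemma~\ref{lemma:5subsets:1} by Lemma~\ref{lemma:5subsets:2}, which already supplies the needed partition for every $\nu\geq 1$ (so no recursion through Theorem~\ref{cc} is required to pass from $\nu=1$ to general $\nu$). Set $\lambda=(\ell-3)/2$ and $s=4(m-1)/\ell$. By Lemma~\ref{lemma:5subsets:2} I would partition $D=[1,4m\nu]\setminus([1,4\nu]\cdot m)$ into the nine sets $X$ and $A_i,A_i^*,B_i,Y_i$ for $i\in\{0,1\}$, and write $A=A_0\cup A_1$, $A^*=A_0^*\cup A_1^*$. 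Then split $X$ into subsets $X_a$ of size $\ell-5$ indexed over $a\in A$, each a union of consecutive pairs, and for each $a\in A$ invoke Lemma~\ref{lemma:odd1} (with $d=a$ and $\mathbb{X}=X_a$) to build a path $P_a$ of length $2\lambda=\ell-3$ with ends $0$ and $p_a=a^*-a+\lambda-1$, satisfying $\Delta P_a=\pm(\{a,a^*\}\cup X_a)$ and $V(P_a)\subseteq\{0,-a\}\cup[a^*-a,\,a^*-a+\max X_a]$ (with $\max X_a=0$ when $\ell=5$).

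The one genuinely new point is that $Y_0$ is split into pairs at distance $2$ rather than consecutive pairs, so the closing $3$-path must depend on the block containing $a$. Writing the $A_1$-pairs as $Y_1=\{y_a,y_a-1\mid a\in A_1\}$ and the $A_0$-pairs as $Y_0=\{y_a,y_a-2\mid a\in A_0\}$, I would set
\[
Q_a=\begin{cases} 0,\,-y_a,\,-1,\,p_a & \text{if }a\in A_1,\\ 0,\,-y_a,\,-2,\,p_a & \text{if }a\in A_0,\end{cases}
\]
let $C_a=P_a\cup Q_a$ with vertices read in $\Z_{8m\nu}$, and put $\mathcal{C}=\{C_a\mid a\in A\}$. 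A direct computation gives $\Delta Q_a=\pm\{y_a,y_a-1,p_a+1\}$ for $a\in A_1$ and $\Delta Q_a=\pm\{y_a,y_a-2,p_a+2\}$ for $a\in A_0$. The two cases differ precisely because of condition~2 of Lemma~\ref{lemma:5subsets:2}: for $a\in A_1$ one has $p_a+1\in B_1$, whereas the shift $i-1=-1$ for $i=0$ forces $p_a+2\in B_0$, which is exactly the extra difference recovered by moving the middle vertex of $Q_a$ from $-1$ to $-2$. Combining $\bigcup_a\Delta P_a=\pm(A\cup A^*\cup X)$ with these lists yields $\Delta\mathcal{C}=\pm(A\cup A^*\cup X\cup Y_0\cup Y_1\cup B_0\cup B_1)=\pm D$, and the count $2\ell|A|=2\ell\nu s=8\nu(m-1)=|{\pm}D|$ confirms that every nonzero, non-$m$-divisible difference is covered exactly once.

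It remains to check that each $C_a$ is a cycle, i.e.\ that $P_a$ and $Q_a$ meet only at their common ends $0$ and $p_a$ modulo $8m\nu$; I expect this to be the main obstacle, exactly as in Proposition~\ref{part size 8 nu - 1 mod 2ell}. The internal vertices of $Q_a$ are $-y_a$ and $-1$ (resp.\ $-2$), which reduce modulo $8m\nu$ to the large values $8m\nu-y_a$ and $8m\nu-1$ (resp.\ $8m\nu-2$), while $V(P_a)\setminus\{0\}\subseteq\{-a\}\cup[a^*-a,\,a^*-a+\max X_a]$. Using condition~5 (namely $a^*-a\leq 2m\nu-1$, together with $\max X_a<4m\nu$ and $Y_0\cup Y_1\subset[1,2m\nu-1]$ for $\ell\geq 7$), the positive interval of $P_a$ stays below $6m\nu-1$ and so cannot reach $8m\nu-y_a\geq 6m\nu+1$, $8m\nu-1$, or $8m\nu-2$; and the vertex $-a$ equals none of $-y_a,-1,-2$ because $1\notin A$, every element of $A$ exceeds $2$, and $A\cap Y_0=A\cap Y_1=\emptyset$. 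The short case $\ell=5$ (where $X_a=\emptyset$ and $P_a$ has length $2$, so $V(P_a)=\{0,-a,a^*-a\}$) is verified directly by the same inequalities. This shows that $\mathcal{C}$ is the desired $(8m\nu,8\nu,\sC_\ell)$-DF.
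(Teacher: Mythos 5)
Your construction for the generic case coincides with the paper's proof: the same nine-set partition from Lemma~\ref{lemma:5subsets:2}, the same paths $P_a$ from Lemma~\ref{lemma:odd1}, the same closing $3$-paths $Q_a=0,\,-y_a,\,i-2,\,p_a$ for $a\in A_i$, and the same difference count and cycle check. That part is sound.

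The genuine gap is that you invoke Lemma~\ref{lemma:5subsets:2} uniformly for all admissible $(\ell,m)$, whereas the paper first disposes of the two cases $(\ell,m)\in\{(5,6),(7,8)\}$ by exhibiting explicit base cycles $C^5_{i,j}$ and $C^7_{i,j}$, and only then appeals to the lemma. This is not a stylistic choice: the partition constructed in the proof of Lemma~\ref{lemma:5subsets:2} breaks down for exactly these parameters. For $(\ell,m)=(5,6)$ one has $s=4$, $\lambda=\epsilon=1$, and $B_0=\big(\{3,5\}+[0,2\nu-2]_e\cdot 6\big)+1$ contains the elements $6(2j+1)$, which are multiples of $m$ and hence not in $D$; for $(\ell,m)=(7,8)$ one has $\epsilon=0$ and $Y_{0,0}=B_0+1$ contains $8(2j+1)$. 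So the nine-set partition you rely on is not actually available there, and your argument has nothing to fall back on; you must treat $\ell=m-1\in\{5,7\}$ separately, e.g.\ with the explicit difference families given in the paper. A smaller point: for $a\in A_0$ the internal vertex $-2$ of $Q_a$ must also be shown to avoid $V(P_a)$, which amounts to $2\notin A_0$; this holds for the sets actually built in Lemma~\ref{lemma:5subsets:2} but is not among the five conditions you quote (which only give $1\notin A_0\cup A_1$), so your assertion that ``every element of $A$ exceeds $2$'' needs a justification drawn from the construction rather than from the lemma's statement.
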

\begin{proof}
We first consider the case $\ell=m-1\in\{5,7\}$. 
For every $i\in[1, 2\nu]$ and $j\in\{0,1\}$, set $x_i=2\nu+i$, $y_i=2i-1$, and
let $C^\ell_{i,j}$ be the following $\ell$-cycle:
\begin{align*}
C^5_{i,j} &= 
  \begin{cases}
    (0, 6x_i-1, 6y_i, -4, 6y_i-2) & \text{if $j=0$}, \\
    (0, 6x_i-3, 6y_i,  5, 6y_i+1) & \text{if $j=1$},
  \end{cases} \\
C^7_{i,j} &= 
  \begin{cases}
    (0, 8x_i-7, 8y_i,       3, 8y_i-1,       4, 8y_i-2)            & \text{if $j=0$}, \\
    (0, 8x_i-1, 8y_i, 16y_i+6, 8y_i+1, 16y_i+5, 8y_i+2) & \text{if $j=1$}.
  \end{cases} 
\end{align*}
Letting 
$\mathcal{C}^\ell=\{C^\ell_{i,j}\mid i\in[1, 2\nu], j\in\{0,1\}\}$, since
\begin{align*}
\Delta C^5_{i,0} &= \pm\{6x_i-1, 6(x_i-y_i)-1, 6y_i + 4,  6y_i + 2, 6y_i-2)\},\\
\Delta C^5_{i,1} &= \pm\{6x_i-3, 6(x_i-y_i)-3, 6y_i - 5,  6y_i - 4, 6y_i+1)\},\\
\Delta C^7_{i,0} &= \pm\{8x_i-7, 8(x_i-y_i)-7, 8y_i -3, 8y_i -4, 8y_i -5, 8y_i -6, 8y_i-2)\},\\
\Delta C^7_{i,1} &= \pm\{8x_i-1, 8(x_i-y_i)-1, 8y_i +6, 8y_i +5, 8y_i +4, 8y_i +3, 8y_i+2)\},
\end{align*}
and considering that $\{x_i-y_i \mid i\in[1, 2\nu]\} = \{2\nu-i+1\mid i\in[1, 2\nu]\}$, 
it follows that 
$\Delta \mathcal{C}^\ell = \pm [1, 4m\nu]\setminus (\pm[1,4\nu]\cdot m) = \Z_{8m\nu}\setminus(m\cdot\Z_{8m\nu})$, 
hence $\mathcal{C}^\ell$ is a set of base cycles for a cyclic $\ell$-cycle decomposition of $K_m[8\nu]$.

We now assume that $(\ell,m)\not\in\{(5,6),(7,8)\}$. 
Set $\lambda=(\ell-3)/2$ and let $\epsilon\in\{0,1\}$, with $\epsilon\equiv \lambda \pmod{2}$. Also,
set $s=4(m-1)/\ell$ and $q={\nu} s$, and note that the number of base cycles in the difference family is $q$.  
By Lemma \ref{lemma:5subsets:2} there is a partition of
$D=[1, 4m\nu]\setminus ([1,4\nu]\cdot m)$ into nine subsets, $X$ and
$A_i, A_i^*, B_i, Y_i$, for $i\in\{0,1\}$, which satisfy the following properties:
\begin{enumerate}
  \item $|X|= (\ell-5)\nu s$, $|A_i|=|A_i^*|=|B_i|=\frac{\nu s}{2}$,   $|Y_i| = \nu s$;
  \item there is a bijection $a\in A_i\mapsto a^* \in A_i^*$ such that 
  $B_i-\lambda + i -1 = \{a^*-a \mid a\in A_i\}$;
  \item $X$ and $Y_1$ can be partitioned into pairs of consecutive integers;
  \item $Y_0$ can be partitioned into pairs at distance 2;
  \item $1 \notin A_0 \cup A_1$, $B_i-\lambda + i -1\subset [2, 2m\nu-1]$, and $Y_0\ \cup\ Y_1\subset [1, 2m\nu-1]$   when  $\ell\geq 7$.    
\end{enumerate}
In particular, $X$ can be seen as the disjoint union of $q$ sets $X_a$ of size $\ell-5$, 
indexed over the elements of $A_0\ \cup\ A_1$, 
each of which can be partitioned into pairs of consecutive integers. Also, we can write
$Y_0 = \{y_a, y_a - 2 \mid a\in A_0\}$ and  $Y_1 = \{y_a, y_a -1 \mid a\in A_1\}$. 

By applying Lemma \ref{lemma:odd1} 
with $d=a \in A_0\ \cup\ A_1$ and $\mathbb{X}=X_a$, we construct a path $P_a$ of length $2\lambda = \ell-3$
such that
\begin{eqnarray}
  & \label{2Pa:ends}&\text{the ends of $P_a$ are $0$ and $p_a= a^*- a + \lambda-1$,}\\
  & \label{2Pa:vertices}&\text{$V(P_a)\subseteq \{0,-a\}\ \cup\ [a^*-a, a^*-a + \max{X_a}]$}, \\
  & \label{2Pa:delta}&\text{$\Delta P_a = \pm \{a, a^*\}\ \cup\ \pm X_a$},
\end{eqnarray}
where $\max X_a=0$ when $\ell=5$.
For $i\in\{0,1\}$ and $a\in A_i$, let $C_a$ be the closed trail obtained by joining $P_a$ and 
the $3$-path $Q_a = 0,\; -y_a,\; i-2,\; p_a$,
and considering its vertices as elements of $\mathbb{Z}_{8m\nu}$. 
We claim that $\mathcal{C} = \{C_a\mid a\in A\}$, is the desired difference family.

We first show that $\Delta \mathcal{C} = \pm D$. 
Recalling \eqref{2Pa:delta}, and  considering that 
\[B_i =\{a^* -a +\lambda -i + 1 \mid a\in A_i\} = \{p_a+2-i\mid a\in A_i\}\]
and $\Delta Q_a = \pm \{y_{a}, y_{a}-2+i, p_a+2-i\}$, for every $i\in\{0,1\}$ and $a\in A_i$,
then
\begin{align*}
  \Delta \mathcal{C} &= \bigcup_{i\in \{0,1\}}\bigcup_{a\in A_i} \Delta C_a =
  \bigcup_{i\in \{0,1\}}\bigcup_{a\in A_i} \big(\Delta P_a\ \cup\ \Delta Q_a\big)  \\
   & =\pm \bigcup_{i\in \{0,1\}}\bigcup_{a\in A_i} \big(X_a \ \cup\ \{a, a^*, y_{a}, y_{a}-2+i, p_a+2-i\}\big)\\
   & =\pm (D\setminus (B_0\ \cup\ B_1)) \ \cup\ \pm\big\{p_a+2-i\mid i\in\{0,1\}, a\in A_i\big\} = \pm D.
\end{align*}

We finish by showing that $C_a$ is a cycle. 
Recalling \eqref{2Pa:vertices}, if $\ell=5$, then  
$V(P_a) = \{0, a, p_a=a^*-a\}$. Considering that
$A_0, A_1, Y_0, Y_1\subset [1, 4m\nu -1]$, it follows that 
$a\not\in\{-y_a, -1\}$, hence $C_a$ is a cycle. 
Again by \eqref{2Pa:vertices}, if $\ell\geq7$, then
$V(P_a) \subseteq \{0,-a\}\ \cup\ [a^*-a, a^*-a + \max X_a]$. 
By conditions 2 and 4, we have that
$a^*-a\in B - \lambda +i -1 \subset [2, 2m\nu-1]$ for every $a\in A_i$. Since
$\max X_a<4m\nu$, then
$V(P_a) \subseteq \{0,-a\} \ \cup\ [2, 6m\nu-1]$ for every $a\in A_0 \cup A_1$. 
Recalling that $1\not\in A_0 \cup A_1$ and $Y_0\ \cup\ Y_1\subset [1, 2m\nu-1]$, 
and that $A, Y_0$ and $Y_1$ are pairwise disjoint, it follows that $\{-1, -y_a\}\not\in V(P_a)$, therefore
$P_a$ and $Q_a$ only share their end-vertices, 
hence $C_a$ is a cycle, for every $a\in A_0\ \cup A_1$,
and this completes the proof.
\end{proof}

\begin{ex}
Let $\ell=9$, $m=10$ and $n=8$, so that $\nu=1, \lambda=3, \epsilon=1, s=4$, 
and the size of our $(80, 8, \sC_{9})$-DF will be $q=4$.
Following the proof of Lemma \ref{lemma:5subsets:2},  we have that
\[
A_0=\{28,29\}, A_1=\{9,21\}, A_0^*=\{31,32\}, A_1^*=\{19,33\}, 
\]
\[
B_0=\{6,8\}, B_1=\{13,15\}, Y_{0,0}= B_0-1=\{5,7\}, Y_{0,1}= B_1+1=\{14,16\}.
\]
Our index set is $A=\{9,21,28,29\}$, and
 $Y_0 = Y_{0,0}\ \cup \ Y_{0,1} = \{5,7, 14,16\}.$
We can now choose, for instance, $Y_1=[1,4]$ and $X_{28}=\{11,12,17,18\}, X_{29}=[22,25], X_{9}=\{26,27,34,35\}, 
X_{21}=[36,39]$. 

Following the proof of Proposition \ref{part size 8 nu - ell+1 mod 2ell}, the paths we can get with this partition of 
$D=[1, 80]\setminus \big([1,8]\cdot 10\big)$  are
\begin{align*}
P_{28} &=(0,-28,4,22,5,17,6)&\quad & Q_{28}=(0,-7,-2,6)\\
P_{29} &=(0,-29,2,27,3,26,4)&\quad & Q_{29}=(0,-16,-2,4)\\
P_{9}  &=(0,-9,10,45,11,38,12)&\quad & Q_{9}=(0,-2,-1,12)\\
P_{21} &=(0,-21,12,51,13,50,14)&\quad & Q_{21}=(0,-4,-1,14),
\end{align*}
and joining them will give us the four cycles making up the required difference family.
\end{ex}

\section{The proof of Theorem \ref{ell divides n}}\label{odd2}

The aim of this section is to prove Theorem \ref{ell divides n}.
The case $n=2\ell$ is treated in Proposition \ref{part size 2l}, while the case $n\equiv 0 \pmod{4\ell}$ is dealt with in Proposition \ref{part size 0 mod 4ell - m odd} for $m$ odd, and in Proposition 
\ref{part size 0 mod 4ell - m even} for $m$ even.

These results will be proved using a strategy very similar to the one used in 
Section \ref{odd1}. Once more, we partition (in Lemmas \ref{lemma:dividesn:1}, \ref{lemma:dividesn:2}) the set $D$
of differences to be realized into various sets, 
namely $D=A\ \cup\ A^*\ \cup\ B\ \cup\ Y\ \cup\ W$, where 
$A$ is a set of size $q$, the cardinality of the DF,  that will serve as the set of indices for the cycles in the DF;  it will be paired up with a second $q$-set, the set $A^*$ chosen with the help of Lemma \ref{lem:AA*}. 
To build the cycle $C_a$, to each pair of elements $\{a, a^*\}$ we will associate a set $W_a\subset W$ of size 
$\ell-5$ that can be partitioned into pairs at distance $m$, and a pair of  integers $\{y_a,y_a'\}$ from 
$Y$, in such a way that $\Delta C_a = \pm\{a, a^*, y_a, y_a', \delta_a\}\ \cup \ \pm W$,
where $\delta_a$ is an integer depending on $a$. Since the pairs $\{a, a^*\}$, $\{y_a,y_a'\}$ are chosen to partition, between them, $A\cup A^*$ and $Y$, respectively, while the sets $W_a$ partition $W$, then
$\cup_{a\in A} \Delta C_a = \pm (D\setminus B)\ \cup\ \pm\{\delta_a\mid a\in A\}$. By showing
that $\{\delta_a\mid a\in A\}=B$, we prove that $\{C_a\mid a\in A\}$ is the desired difference family. 

We choose to also prove Proposition \ref{part size 2l}, the case $n=2\ell$, with this strategy to help familiarize the reader with the techniques we use later in Proposition \ref{part size 0 mod 4ell - m odd} and in Proposition \ref{part size 0 mod 4ell - m even}.  In this particular case, a simpler proof using Rosa sequences - a variation of Skolem sequences - is also possible, but adapting such an approach to the general case is not straightforward.

\begin{prop} \label{part size 2l}
  If $\ell=2\lambda+3 \geq 5$ and $m \equiv 0, 1 \pmod{4}$, 
  then there exists a $(2\ell{m}, 2\ell, \sC_\ell)$-DF.
\end{prop}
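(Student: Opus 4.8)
The plan is to follow exactly the difference-family strategy outlined at the start of Section~\ref{odd2}. I need to construct a $(2\ell m, 2\ell, \sC_\ell)$-DF, that is, a family $\cC$ of $q = (m-1)\cdot 2\ell/(2\ell) = (m-1)$ base $\ell$-cycles whose lists of differences partition $\Z_{2\ell m}\setminus m\Z_{2\ell m}$. Here the ground set of differences to be realized is $D=[1,\ell m]\setminus([1,\ell]\cdot m)$, and $|D| = \ell m - \ell = \ell(m-1) = \ell q$, consistent with $q=m-1$ cycles each contributing $2\ell$ signed differences (so $\ell$ positive differences each). The key structural device, as in Section~\ref{odd1}, is that each cycle $C_a$ is assembled from a path $P_a$ of length $\ell-3$ produced by Lemma~\ref{lemma:odd1}, joined to a short $3$-path $Q_a$. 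The difference $m$ should play the role that consecutive integers played in the even-length sections: the $X_a$ (here $W_a$) sets will be partitioned into \emph{pairs at distance $m$} rather than consecutive pairs, which is the natural move since the forbidden differences are exactly the multiples of $m$.

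First I would establish the analogue of Lemmas~\ref{lemma:5subsets:0}--\ref{lemma:5subsets:2}: a partition lemma splitting $D$ into an index set $A$ (with $1\notin A$), a paired set $A^*$ obtained via the bijection of Lemma~\ref{lem:AA*}, a set $B$ whose elements will turn out to be the $\delta_a = p_a+1$ values, a set $Y=\{y_a,y_a'\mid a\in A\}$ supplying the middle path $Q_a$, and the remaining set $W$ decomposed into the $W_a$'s. The two congruence classes $m\equiv 0$ and $m\equiv 1\pmod 4$ should be handled by a small shift in the construction of $A$ and $A^*$ (exactly the role the parameter $\epsilon$ plays in the earlier lemmas), since the parity of $m$ controls whether the alternating/offset bookkeeping closes up. I expect this is where the case distinction $m\equiv 0,1\pmod4$ actually enters, mirroring the admissibility constraint from Corollary~\ref{nonexistence corollary}.

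Next, with the partition in hand, I would for each $a\in A$ invoke Lemma~\ref{lemma:odd1} (with $d=a$ and $\mathbb X = W_a$) to build $P_a$ with endpoints $0$ and $p_a = a^*-a+\lambda-1$ and with $\Delta P_a = \pm\{a,a^*\}\cup\pm W_a$, then close it up with $Q_a = 0,\,-y_a,\,*,\,p_a$ so that $\Delta Q_a$ accounts for $\{y_a, y_a', p_a+1\}$. The computation $\Delta\cC = \pm(D\setminus B)\cup\pm\{p_a+1\mid a\in A\} = \pm D$ is then the same routine verification as in Propositions~\ref{part size 4 nu}, \ref{part size 8 nu - 1 mod 2ell} and \ref{part size 8 nu - ell+1 mod 2ell}, hinging on the partition lemma's guarantee that $B = \{p_a+1\mid a\in A\}$. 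The main obstacle I anticipate is \emph{not} the difference count but the verification that each $C_a$ is genuinely a \emph{cycle} (no repeated vertices in $\Z_{2\ell m}$): this requires controlling $V(P_a)$ via property~\eqref{prop1} of Lemma~\ref{lemma:odd1}, bounding $\max W_a$ and the spread of $a^*-a$ so that $P_a$ lives in a window disjoint from $\{-1,-y_a\}$ modulo $2\ell m$, and using $1\notin A$ together with $A\cap Y=\emptyset$. As in the earlier propositions, I would expect the small exceptional parameters (here likely $\ell=5$ and/or $m$ minimal) to need separate explicit difference families, handled by exhibiting base cycles directly before the general argument.
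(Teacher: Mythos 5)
Your overall plan has the right shape and matches the paper's strategy in outline (a partition $D=A\cup A^*\cup B\cup Y\cup W$ of $[1,\ell m]\setminus[1,\ell]\cdot m$ with $q=m-1$ base cycles, the bijection of Lemma~\ref{lem:AA*}, the sets $W_a$ split into pairs at distance $m$, and a final check that each closed trail has no repeated vertices), but there are two concrete gaps. First, you cannot invoke Lemma~\ref{lemma:odd1} with $\mathbb{X}=W_a$: that lemma explicitly requires $\mathbb{X}$ to be partitioned into pairs of \emph{consecutive} integers, whereas you (correctly) insist that the $W_a$ must be partitioned into pairs at distance $m$ --- and this is unavoidable, since for $m\equiv 0\pmod 4$ the set $W$ is a union of intervals $[1,m-1]+jm$ of odd size $m-1$, which cannot be split into consecutive pairs. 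The paper sidesteps this by not using Lemma~\ref{lemma:odd1} at all in Section~\ref{odd2}: it writes the zig-zag path down explicitly, setting $c_{a,2t}=a^*-a+(t-1)m$ and $c_{a,2t+1}=a^*-a+w_{a,t}+(t-1)m$, so that consecutive edge differences are $w_{a,t}$ and $w_{a,t}-m$. You would need to state and prove this $m$-spaced analogue (including the monotonicity bounds $a>w_{a,t}\geq w_{a,t+1}+2m$ that make the vertices distinct); as written, the central step of your construction does not go through.

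Second, your uniform closing path $Q_a=0,\,-y_a,\,\ast,\,p_a$ with a single extra difference $\delta_a=p_a+1$ does not account for how the set $B$ is actually realized. Here $B-(\lambda-1)m$ must be the full interval $[1,m-1]$, containing both parities, while Lemma~\ref{lem:AA*} produces difference sets $\{a^*-a\}$ of a single parity from each block of $A$. The paper resolves this by closing the cycles in three different ways: the last two vertices are $(-1,-y_a)$ for $a\in A_1$, $(1,-y_a+1)$ for $a\in A_{-1}$, and $(3-\epsilon m,-y_a)$ for $a\in A_0$ (the last block only present for $m$ even), yielding extra differences $c_{a,\ell-3}+1$, $c_{a,\ell-3}-1$ and $c_{a,\ell-3}+(\epsilon m-3)$ respectively; correspondingly $Y$ is partitioned into consecutive pairs together with one special pair at distance $\epsilon m-3$, and the hypothesis $m\equiv 0,1\pmod 4$ is used precisely to make the relevant pieces of $Y$ have even size (not, as you guessed, through a shift in $A$ and $A^*$). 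Your proposal leaves this parity bookkeeping unresolved, and it is exactly where the combinatorial difficulty of the $n=2\ell$ case lies. (Your expectation of a small exceptional case is correct: the paper treats $m=4$ with $\lambda$ even separately by exchanging one difference.)
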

\begin{proof}
 Let $\epsilon=0$ or $1$ according to whether $\lambda$ is even or odd,
 and let $q=(m-1) $; the difference family will have size $q$.

We begin by considering the case that either $m>4$ or $(m,\epsilon)=(4,1)$.
We first partition the set $D = [1,m-1] + [0, \ell-1]m$ of differences to be realized into
 five subsets $A, A^*$, $B, Y$ and $W$, with $|A|=|A^*|=|B|=q$, $|Y|=2q$, $|W|=(\ell-5)q$.
 
We will set $A=A_{-1} \cup A_0\cup A_1$, and $A^*=A^*_{-1} \cup A^*_0\cup A^*_1$, where the sets $A_{i}$, $A_i^*$ for $i\in[-1,1]$ and $B$ are defined as follows.  If $m$ is odd, then 
 \[
A_0 = \varnothing \mbox{ and } A_0^* = \varnothing,
\]
and if $m$ is even, then
\[
A_0 = \{m-1 + (\lambda -2 +2\epsilon)m\} \mbox{ and } A_0^* = A_0 + (1-\epsilon)m+2.
\]
In any case,
\[
 \begin{array}{ll}
   A_1   = \left[1, \lfloor \frac{m-1}{2}\rfloor \right] + (\lambda -2 +2\epsilon)m,
&   
   A^*_1 = A_1 + \lfloor \frac{m-1}{2}\rfloor,\\
   \rule{0pt}{1.5\normalbaselineskip}
   A_{-1}   = \left[\lceil \frac{m+1}{2}\rceil, m-1 \right] + (\ell-2)m,
& 
   A^*_{-1} = A_{-1} + \lfloor \frac{m+1}{2}\rfloor,  \\
     \rule{0pt}{1.5\normalbaselineskip}
   \;\,B= [1,m-1]+(\lambda-1)m.
&   
 \end{array}
\] 

Either directly, or by applying Lemma \ref{lem:AA*}, it is easy to see that there is a bijection
$a\in A_i \mapsto a^* \in A^*_{i}$, for $i\in[-1,1]$, such that
 \begin{equation}\label{a*-a}
 \begin{aligned}
  \{a^* - a\mid a\in A_i\} &=    
     \begin{cases}
       \varnothing          &  \text{if $i=0$ and $m$ is odd}, \\  
       [1, m-2]_o           &  \text{if $i=1$ and $m$ is odd}, \\   
       [2, m-1]_e           &  \text{if $i=-1$ and $m$ is odd}, \\              
       (1-\epsilon)m +2     &  \text{if $i=0$ and $m$ is even}, \\  
       [1, m-3]_o           &  \text{if $i=1$ and $m$ is even}, \\        
       [2, m-2]_e           &  \text{if $i=-1$ and $m$ is even}.       
     \end{cases}
  \end{aligned}
  \end{equation}
  Therefore,
  \begin{equation}\label{A*-A}
  \begin{aligned}
    \{a^*- a +i\mid i=\pm 1, a\in A_i\} \cup \{a^*- a + (\epsilon{m}-3)\mid a\in A_0\} \\
    = [1,m-1]=  B-(\lambda-1)m.
  \end{aligned}
  \end{equation}

  Letting $D' = [1, m-1] + \{\lambda -2 + 2\epsilon, \lambda+\epsilon, \ell-2, \ell-1\}\cdot{m}$,
  $A=A_{-1} \cup A_0\cup A_1$, and $A^*=A^*_{-1} \cup A^*_0\cup A^*_1$, we notice that
  $A \cup A^* \subset D'$ and $B\cap D'= \varnothing.$
 Also, the set $Y = D' \setminus (A\ \cup A^*)$ has size $2m-2 = 2|A|$ and 
 it is the disjoint union of the following three intervals:
 \begin{equation}\label{Y123}
 \begin{aligned}
   Y_1 &= (\lambda+\epsilon)m +
   \begin{cases}
     [1, m-1] &\text{if $m$ is odd},\\   
     [2, m-1] &\text{if $m$ is even},
   \end{cases}\\
   Y_2 &= \left[1, \left\lceil\frac{m-1}{2}\right\rceil\right]   + (\ell-2)m,\;\;\;
   Y_3 = \left[\left\lfloor \frac{m+1}{2} \right\rfloor, m-1\right] + (\ell-1)m.
 \end{aligned}
 \end{equation}
 Recalling that $m\equiv 0,1 \pmod{4}$, it follows that $Y_2$ and $Y_3$ have even size.  Therefore, $Y$ can be partitioned into pairs of consecutive integers and a pair $\{y', y''\}\subset Y_1$ such that $\pm (y'-y'') = \pm (\epsilon m-3)$. 
Using the elements of
 $A$ to index such pairs, we can thus write
 \begin{equation}\label{Y}
   Y=\big\{y_a, y_a-1\mid a\in A\setminus A_0\big\}\ \cup\ \{y_a, y_a -(\epsilon{m} -3)\mid a\in A_0\}.
 \end{equation} 
 Finally, let $W = D\setminus(D' \cup B)$ and note that $W=[1,m-1]+(U_1 \cup U_2)m$ where
 \begin{equation}\label{W0}
    U_1 = [0, \lambda-3+\epsilon]\;\;\text{and\;\;} U_2 = [\lambda+1+\epsilon, \ell-3].
 \end{equation}
 Since  both $U_1$ and $U_2$ have even size, and $|U_1|+|U_2|=\ell-5$, 
 it follows that $W$ can  be partitioned into $m-1=q$ subsets  $\{W_a\mid a\in A\}$ each of size $\ell-5$ such that 
 \begin{align}
  \label{W1}
   & W_a =\{w_{a, t}, w_{a, t}-m \mid t\in[1, \lambda-1]\},\\ 
   \label{W2}
   & \text{$w_{a, t} \geq w_{a, t+1} + 2m$ and $w_{a, t}\not\equiv a\; ({\rm mod}\; m)$
           for every $t\in[1, \lambda-2]$.}
 \end{align} 
 
 We use the partition $\{A, A^*,B,Y, W\}$ of $D$ to construct the desired
 difference family.
 Let $\mathcal{F}=\{C_a\mid a\in A\}$, with 
$C_a = (c_{a,0}, c_{a,1}, \ldots, c_{a,\ell-1})$, 
be a set of $q$ closed trails of length $\ell$ defined as follows:
\begin{align*}
(c_{a,0}, c_{a,1}, c_{a,2}) &= (0,a^*, a^*-a),\\
(c_{a,\ell-2}, c_{a,\ell-1}) &= 
\begin{cases}
  (-1, -y_a) & \text{if $a\in A_{1}$},\\
  (1, -y_a + 1) & \text{if $a\in A_{-1}$},\\
  (3-\epsilon{m}, -y_a) & \text{if $a\in A_{0}$},\\  
\end{cases}\\
c_{a, u} &= a^*-a +
\begin{cases}
  w_{a, \frac{u-1}{2}} + \frac{u-3}{2}m     & \text{if $u\in[3, \ell-4]$ is odd}, \\
  \frac{u-2}{2}m       & \text{if $u\in[4, \ell-3]$ is even},  
\end{cases}
\end{align*}
We claim that $\mathcal{F}$ is a $(2\ell{m}, 2\ell, \sC_\ell)$-DF, that is, 
$\Delta \mathcal{F} = \mathbb{Z}_{2\ell{m}}\setminus(m\mathbb{Z}_{2\ell{m}})$ and 
the vertices of each $C_a$ are pairwise distinct. 
For   $i\in[-1,1]$ and $a\in A_{i}$, we have 
\[
  \Delta C_a = \pm W_a \cup
  \begin{cases} 
    \pm\left\{a, a^*, y_a, y_a-1, c_{a,\ell-3}+i\right\} & \text{if $i=\pm1$,}\\
    \pm\left\{a, a^*, y_a, y_a-(\epsilon{m}-3), c_{a,\ell-3}+(\epsilon{m}-3)\right\} & \text{if $i=0$.}    
  \end{cases}
\] 
Since
$c_{a,\ell-3} = a^*- a + (\lambda-1) m$, by condition \eqref{A*-A}
it follows that 
\[
  \{c_{a,\ell-3}+i \mid i=\pm1, a\in A_{i}\} \ \cup \{c_{a,\ell-3}+(\epsilon{m}-3)\mid a\in A_0\}= B.
  \] 
Recalling also conditions \eqref{Y} and \eqref{W1}, we have that
$\Delta \mathcal{C} = \pm D = \mathbb{Z}_{mn}\setminus(m\mathbb{Z}_{mn})$.
Finally, we have to show that each $C_a$ does not have repeated vertices. 
By \eqref{a*-a}, $a^*-a\in[1,m+2]$, and
by conditions \eqref{W0} and \eqref{W2}
we have that $(\ell-3)m < w_{a,1} < (\ell-2)m$ and $w_{a,t}\geq w_{a,t+1} + 2m$ 
for every $a\in A$ and $t\in [1, \lambda-2]$. Hence
\begin{align*}
  \ell{m} &> a^*-a + w_{a,1} = c_{a,3} >  c_{a,5} > \ldots > c_{a,\ell-6} \\
    &> c_{a,\ell-4} =  a^*-a + w_{a,\lambda-1} + (\lambda-2)m \\
  &> a^*-a  +  (\lambda-1)m = c_{a, \ell-3} > c_{a, \ell-5} > \ldots > c_{a,6}  \\
  &> c_{a,4} = a^*-a + 2m > a^*-a = c_{a,2}\geq 1.
\end{align*}
Also, by \eqref{Y123} $c_{a, \ell-1}\in[-\ell{m}, -\lambda{m}]$.
Recalling that $c_{a, \ell-2}\in\{3-m, -1, 1,3\}$,
if $c_{a,2}=a^*-a\in\{1,3\}$, then $a\in A_1$ by \eqref{a*-a}, hence $c_{a, \ell-2}=-1$. 
Therefore, $c_{a,0}=0$ and $c_{a,2}, c_{a,3}, \ldots, c_{a, \ell-1}$ are pairwise distinct.
By \eqref{W2} and considering that $a^*\not\equiv 0\; ({\rm mod}\; m)$ and $a^*\in[2, \ell{m}-1]$, 
we have that $a^*=c_{a,1}\neq c_{a,u}$ for every $u\in[0, \ell-1]$. 
It follows that 
each $C_a$ is a cycle, and this completes the proof provided $(m,\epsilon) \neq (4,0)$.

The case $m=4$ and $\epsilon=0$ is similar, except that in $D$ and $D'$ we replace the difference $\ell m - 1 = 4\ell-1$ with $\ell m + 1 = 4\ell+1$, and partition the set $Y=D' \setminus (A \cup A^*)$ into intervals $Y_1= [2,3] + {4}\lambda$ and $Y_2=[1,2] + 4(\ell-2)$ 
as before together with the set
\[
Y_3 = \left(\left[ \left\lfloor \frac{m+1}{2} \right\rfloor, m-2 \right] + (\ell-1) m \right) \cup \{ \ell m + 1\} = \{4\ell-2, 4\ell+1\}.
\]
Note that $Y_1$ and $Y_2$ each consist of consecutive integers, so that $Y$ is partitioned into $q-1=2$ pairs of consecutive integers and one pair $\{y',y''\}$ satisfying $\pm (y'-y'') = \pm 3 = \pm (\epsilon m-3)$.  The remainder of the proof proceeds as before.
\end{proof}  
    
\begin{ex}
\begin{enumerate}
\item
Let $\ell=9$ and $m=5$. We have $A_1=\{16,17\}$, $A_{-1}=\{38,39\}$, $A_1^*=\{18,19\}$, $A_{-1}^*=\{41,42\}$, $B=\{11,12,13,14\}$, $Y=[21,24] \cup \{36,37\} \cup \{43,44\}$, so that $W=[1,4]\cup[6,9]\cup[26,29]\cup[31,34]$.
The $(2\ell{m}, 2\ell, \sC_\ell)$-DF consists of the following four cycles.
\begin{align*}
C_{16}&=(0,19,3,37,8,17,13,-1,-24)\\
C_{17}&=(0,18,1,34,6,14,11,-1,-22)\\
C_{38}&=(0,42,4,36,9,16,14,1,-43)\\
C_{39}&=(0,41,2,33,7,13,12,1,-36)
\end{align*}

\item Let $\ell=7$ and $m=8$. We have  $A_1=\{1,2,3\}$, $A_0=\{7\},$ $A_{-1}=\{45,46,47\}$, $A_1^*=\{4,5,6\}$,$A_0^*=\{17\},$ $A_{-1}^*=\{49,50,51\}$, $B=[9,15],$ $Y=[18,23]\cup [41,44]\cup [52,55]$,  so that $W=[25,31]\cup[33,39]$.
The $(2\ell{m}, 2\ell, \sC_\ell)$-DF consists of the following seven cycles.
\begin{align*}
C_{1}  &=(0, 6, 5, 39, 13, -1, -20)&\quad  & C_{2} =(0,  5,  3, 38, 11, -1, -23)\\
C_{3}  &=(0, 4, 1, 37,  9, -1, -42)&\quad  & C_{7} =(0, 17, 10, 47, 18,  3, -18)\\
C_{45} &=(0,51, 6, 44, 14,  1, -43)&\quad & C_{46} =(0, 50,  4, 43, 12,  1, -52)\\
C_{47} &=(0,49, 2, 35, 10,  1, -54).&\quad & 
\end{align*}

\item
Let $\ell=7$, $m=4$ and $n=14$.  We have $A_0=\{3\}$, $A_0^*=\{9\}$, $A_1=\{1\}$, $A_1^*=\{2\}$, $A_{-1}=\{23\}$, $A_{-1}^*=\{25\}$, $B=[5,7]$, $Y=\{10,11\} \cup \{21,22\} \cup \{26,29\}$ and $W=[13,15] \cup [17,19]$.  The $(2\ell{m}, 2\ell, \sC_\ell)$-DF consists of the following three cycles:
\begin{align*}
C_{1}  &= (0,  2, 1, 20,  5, -1, -22) \\
C_{3}  &= (0,  9, 6, 23, 10,  3, -26) \\
C_{23} &= (0, 25, 2, 20,  6,  1, -10)
\end{align*}
\end{enumerate}
\end{ex}

We now deal with the case where $n\equiv 0 \pmod{4\ell}$.
The partitions of $D$ (i.e., the set of differences to be realized) outlined in the beginning of this section are given in Lemmas \ref{lemma:dividesn:1} and \ref{lemma:dividesn:2}.

\begin{lemma}\label{lemma:dividesn:1} 
Let $\ell=2\lambda+3 \geq 5$ be odd, let $m\ge3$ be odd and 
$n=4\ell\nu$ with $\nu\geq1$. 
Then there exists a partition of $[1, mn/2]\setminus ([1,n/2]\cdot m)$
into seven subsets,
$A_{i}, A^*_i$, for $i=\pm1$,  and $B, Y$, $W$, satisfying the following properties:
\begin{enumerate}
  \item\label{lemma:dividesn:1:1}   
      $|A_{i}| =|A^*_i|=(m-1)\nu$ for $i=\pm 1$, $2|B| = |Y| = 4(m-1)\nu$, and
      $|W|= 2(\ell-5)(m-1)\nu$;
  \item \label{lemma:dividesn:1:2} 
      there is a bijection $a\in A_i\mapsto a^* \in A_i^*$, for $i=\pm1$, such that 
      \[B-(\lambda-1)m= \big\{a^*-a+i\mid i=\pm 1, a\in A_{i}\big\};\]
  \item \label{lemma:dividesn:1:2b} $a^*-a\geq 2$ for every $a\in A_{-1}\cup A_{1}$;
  \item \label{lemma:dividesn:1:3} 
      $Y$ can be partitioned into pairs of consecutive integers;
  \item \label{lemma:dividesn:1:4} 
      $W$ can  be partitioned into $2(m-1)\nu$ sets  $\{W_a\mid a\in A_{-1}\cup A_{1}\}$ 
      each of size $\ell-5$ such that  
      \begin{enumerate}
      \item  $W_a =\{w_{a, t}, w_{a, t}-m \mid t\in[1, \lambda-1]\}$, and
      \item  $a> w_{a, t} \geq w_{a, t+1} + 2m$ for every $t\in[1, \lambda-2]$.
      \end{enumerate}
\end{enumerate}
\end{lemma}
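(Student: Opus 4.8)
The plan is to realize $D=[1,mn/2]\setminus(m\cdot[1,n/2])$ as the disjoint union of the $2\ell\nu$ ``levels'' $[1,m-1]+jm$, $j\in[0,2\ell\nu-1]$, and then to assign blocks of levels to the seven target sets. Since $m$ is odd, every level has even size $m-1$. A preliminary count fixes the layout: the four index-type sets $A_{\pm1}$ and $A^*_{\pm1}$ will each account for $\nu$ levels, $B$ for $2\nu$ levels, $Y$ for $4\nu$ levels, and $W$ for the remaining $2(\ell-5)\nu=(4\lambda-4)\nu$ levels, giving a total of $(4\lambda+6)\nu=2\ell\nu$; this is precisely why the hypotheses ``$m$ odd'' and ``$n=4\ell\nu$'' are imposed. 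Throughout, the bijections $a\mapsto a^*$ will be produced by Lemma~\ref{lem:AA*}, writing each piece of $A_{\pm1}$ in the form $I_h+J_h\cdot m$ and choosing the translation parameters $\tau_h$ so that the resulting difference sets have the prescribed shape.

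I would first construct $A_{\pm1}$, $A^*_{\pm1}$ and $B$, following the parity-interleaving device already used in Proposition~\ref{part size 2l} and Lemma~\ref{lemma:5subsets:1}. For each piece $I_h+J_h\cdot m$, Lemma~\ref{lem:AA*} yields a difference set $\{a^*-a\}$ that is a single parity class in the within-level coordinate and a step-$2$ progression in the level coordinate; using a small union of such pieces, all the levels of $B$ can be reached. The pieces are arranged so that, over the $2\nu$ levels of $B$, the set $A_1$ contributes the odd within-level differences and $A_{-1}$ the even ones. The corrections $+1$ on $A_1$ and $-1$ on $A_{-1}$ then exchange these parities, so that $\{a^*-a+i\mid i=\pm1,\ a\in A_i\}$ realizes every residue in $[1,m-1]$ on each such level; this is exactly $B-(\lambda-1)m$, giving Property~\ref{lemma:dividesn:1:2}. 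Choosing the level offsets large enough forces $a^*-a\geq2$ (Property~\ref{lemma:dividesn:1:2b}), and the images $A^*_{\pm1}=A_{\pm1}+\tau_h$ are checked to occupy their own $\nu$ levels, disjoint from the rest; this settles Properties~\ref{lemma:dividesn:1:1}--\ref{lemma:dividesn:1:2b}.

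It then remains to lay out $Y$ and $W$ on the unused levels. Since $m$ is odd, each level allotted to $Y$ splits into $(m-1)/2$ pairs of consecutive integers, which yields Property~\ref{lemma:dividesn:1:3} at once. For $W$ I would reserve its $(4\lambda-4)\nu$ levels at the bottom of $D$, below the levels occupied by $A_{\pm1}$, and partition $W$ into the $2(m-1)\nu$ sets $W_a=\{w_{a,t},\,w_{a,t}-m\mid t\in[1,\lambda-1]\}$ by exactly the packing carried out in \eqref{W0}--\eqref{W2} of Proposition~\ref{part size 2l}, choosing the tops $w_{a,t}$ so that they descend in steps of at least $2m$. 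When $\ell=5$ this set is empty and there is nothing to do.

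The step I expect to be the main obstacle is the second part of Property~\ref{lemma:dividesn:1:4}, and in particular the magnitude constraint $a>w_{a,1}$ that ties each index $a\in A_{\pm1}$ to its own descending $W$-chain. Unlike the congruence condition $w_{a,t}\not\equiv a\pmod m$ used in Proposition~\ref{part size 2l}, this demands a coherent global ordering: every index must lie strictly above all the elements assigned to it, while the $W_a$ still partition $W$ and the translates $A^*_{\pm1}$ remain disjoint from $W$, $Y$ and $B$. Reconciling this ordering with the translation parameters $\tau_h$ fixed in the first step — so that no image $a^*$ falls on a level reserved for another set — is the delicate bookkeeping; I would handle it by placing $A_{\pm1}$ on the top levels and indexing the chains in decreasing order, so that larger indices are matched to higher $W$-chains.
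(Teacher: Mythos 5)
Your plan is essentially the paper's own proof: decompose $D$ into the $2\ell\nu$ levels $[1,m-1]+jm$, allocate $4\nu$ levels to $A_{\pm1}\cup A^*_{\pm1}$ at the top (which makes $a>w_{a,t}$ automatic), $2\nu$ to $B$, $4\nu$ to $Y$, and the rest to $W$; realize the bijections via Lemma~\ref{lem:AA*} on pieces $I_h+J_h\cdot m$ with the $\pm1$ corrections completing $B-(\lambda-1)m$; and pack $W$ into descending chains of vertical dominoes exactly as in \eqref{W0}--\eqref{W2}. The only (immaterial) deviation is that you split the $A_1$/$A_{-1}$ contributions by within-level parity, whereas the paper splits them by level parity with within-level ranges $[0,m-2]$ and $[2,m]$; both arrangements are instances of the same mechanism and both work.
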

\begin{proof}
 Let $\epsilon=0$ or $1$ according to whether $\lambda$ is even or odd,
 and let $q=2(m-1)\nu $. We start by defining the intervals $I_h, J_h$ and the integer $\tau_{h}$, for $h\in[0,4]$, 
 as follows: 
 \[
   \begin{array}{c|c|c|c}
     \hline \rule{0pt}{1\normalbaselineskip}
         &   I_h       & J_h               & \tau_h  \\[0.5ex] 
             \hline \hline \rule{0pt}{1\normalbaselineskip}
     h=0 & \multirow{2}{*}{$\left[1, \frac{m-1}{2}\right]$} 
         & \left[(2\ell-4)\nu, (2\ell-3)\nu -1\right]  
         & \nu m +\frac{m-3}{2} \\[0.5ex]  
         \cline{1-1}  \cline{3-4} \rule{0pt}{1\normalbaselineskip}
     h=1 &  
         & \left[(2\ell-2)\nu, (2\ell-1)\nu -1\right]    
         & {\nu}m  +\frac{m-1}{2} \\ [0.5ex] 
         \hline \rule{0pt}{1\normalbaselineskip}
     h=2 & \left[\frac{m+1}{2}, m-2\right]         
         & \multirow{2}{*}{$\left[(2\ell-4)\nu,(2\ell-3)\nu -1\right]$}    
         & {\nu}m +\frac{1-m}{2}  \\[0.5ex]  
         \cline{1-2}  \cline{4-4} \rule{0pt}{1\normalbaselineskip}
     h=3 & \{m-1\}     
         &
         & {\nu}m \\[0.5ex]  \hline   
         \rule{0pt}{1\normalbaselineskip}
     h=4 & \left[\frac{m+1}{2}, m-1\right]  
         & \left[(2\ell-2)\nu, (2\ell-1)\nu -1\right]      
         & {\nu}m +\frac{1-m}{2}    \\[0.5ex]  
         \hline
   \end{array}
 \]
 For every $h\in[0,4]$, set $A'_{h} = I_h + J_h\cdot m$, 
 $A'_{h+5} = A'_{h} + \tau_h$ 
 Also, set $B=[1,m-1] + [\lambda-1,\lambda-2+2\nu]\cdot m$.
 Furthermore, by Lemma \ref{lem:AA*}, there is a bijection
 $a\in A'_h \mapsto a^* \in A'_{h+5}$, for $h\in[0,4]$, such that
 \begin{align*}
  \{a^* - a\mid a\in A'_h\} &=    
     \begin{cases}
       \big[0, m-2]_e + \big[0, 2\nu-1\big]_o   \cdot m      &  \text{if $h=0$}, \\  
       \big[0, m-2\big]_o + \big[0, 2\nu-1\big]_o \cdot m    &  \text{if $h=1$}, \\  
       \big[2, m-1\big]_o +  \big[0, 2\nu-1\big]_e \cdot m   &  \text{if $h=2$}, \\  
            \{m\} + \big[0, 2\nu-1\big]_e \cdot m            &  \text{if $h=3$}, \\  
       \big[2,m]_e +\big[0, 2\nu-1\big]_e \cdot m            &  \text{if $h=4$}.        
     \end{cases}
  \end{align*}
 Considering that the sets $A'_h$ and $B$ are pairwise disjoint, it is not difficult to check that $B$ and
 the sets $A_{1} = A'_0\cup A'_1$, $A^*_{1} = A'_5\cup A'_6$,
 $A_{-1}= A'_2\cup A'_3\cup A'_4$, and $A^*_{-1}= A'_7\cup A'_8\cup A'_9$ satisfy conditions 
 \ref{lemma:dividesn:1:1}--\ref{lemma:dividesn:1:2b}.

Now, denoting by $\ol{D}$ the set of all elements of $[1, mn/2]\setminus ([1,n/2]\cdot m)$ 
not lying in any of the sets $A_{i}, A^*_i$, for $i=\pm 1$, or $B$, we have that $\ol{D}$ can be partitioned into 
the following two subsets:
\begin{align*}
  Y &= [1,m-1] + \big([\lambda - 1 + 2\nu, \lambda - 3 + 6\nu]\ \cup\ \{\lambda -2 + 6\nu\epsilon\}\big)\cdot m, \\
  W &= [1,m-1] + (U_1\ \cup\ U_2)m,
\end{align*}
where $U_1 = [0, \lambda -3 +\epsilon]$ and $U_2 = [\lambda-2+6 \nu+ \epsilon, (2\ell-4)\nu-1]$.

Note that $Y$ has size $4(m-1)\nu = 2|A_{-1}\ \cup\ A_1|$ and it is the disjoint union of $4\nu$ intervals of size $m-1\equiv 0\pmod{2}$; hence $Y$ can be partitioned into 
$|A_{-1}\ \cup\ A_1|$  pairs of consecutive integers, therefore
condition \ref{lemma:dividesn:1:3} holds.

Finally, since $U_1$ and $U_2$ have even size, and $|U_1\ \cup\ U_2| = 2(\ell-5)\nu$, then
$W$ has size $2(\ell-5)(m-1)\nu = (\ell-5)|A_{-1}\ \cup\ A_1|$ and there exists a partition
$\{W_a\mid a\in A_{-1}\ \cup\ A_1\}$ of $W$ satisfying condition \ref{lemma:dividesn:1:4}, and this completes the proof. 
\end{proof}

\begin{prop} \label{part size 0 mod 4ell - m odd}
Let $\ell \geq 5$ be odd, and let $n\equiv 0 \pmod{4\ell}$. 
There exists a $(nm,n,{\sC}_\ell)$-DF for every odd $m\geq 3$.
\end{prop}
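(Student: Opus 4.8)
The plan is to feed the partition produced by Lemma~\ref{lemma:dividesn:1} into the direct cycle construction of Proposition~\ref{part size 2l}. Write $\ell=2\lambda+3$ and $n=4\ell\nu$ with $\nu\geq 1$, and fix $\epsilon\in\{0,1\}$ with $\epsilon\equiv\lambda\pmod 2$; the number of base cycles required is $q=(m-1)n/(2\ell)=2(m-1)\nu$. First I would apply Lemma~\ref{lemma:dividesn:1} to split $D=[1,mn/2]\setminus([1,n/2]\cdot m)$ into the sets $A_{\pm1},A^*_{\pm1},B,Y,W$, together with the bijection $a\mapsto a^*$, the decomposition of $Y$ into the consecutive pairs $\{y_a,y_a-1\}$ and of $W$ into the sets $W_a=\{w_{a,t},w_{a,t}-m\mid t\in[1,\lambda-1]\}$, both indexed by $a\in A_{-1}\cup A_1$. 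Because $m$ is odd there is no $A_0$ term, so the construction is a streamlined version of the $m$-even bookkeeping of Proposition~\ref{part size 2l}.

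For $i\in\{-1,1\}$ and $a\in A_i$ I would build the $\ell$-cycle $C_a=(c_{a,0},\dots,c_{a,\ell-1})$ over $\Z_{mn}$ by setting $(c_{a,0},c_{a,1},c_{a,2})=(0,a^*,a^*-a)$, taking the terminal pair $(c_{a,\ell-2},c_{a,\ell-1})$ equal to $(-1,-y_a)$ when $a\in A_1$ and to $(1,-y_a+1)$ when $a\in A_{-1}$, and filling the middle with $c_{a,u}=(a^*-a)+w_{a,(u-1)/2}+\frac{u-3}{2}m$ for odd $u\in[3,\ell-4]$ and $c_{a,u}=(a^*-a)+\frac{u-2}{2}m$ for even $u\in[4,\ell-3]$. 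When $\ell=5$ the middle block is empty and $C_a$ is a $5$-cycle.

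Counting differences is then routine: the first two edges give $\pm a^*$ and $\pm a$; each consecutive pair of middle edges produces a distance-$m$ pair $\{w_{a,t},w_{a,t}-m\}$, so the middle realizes exactly $\pm W_a$; and the three terminal edges give $\pm y_a,\pm(y_a-1)$ together with the closing difference $\pm\big((a^*-a)+(\lambda-1)m+i\big)$. By property~\ref{lemma:dividesn:1:2}, the collection of these closing differences over all $a$ is precisely $B$, while $\{a,a^*\}$, $\{y_a,y_a-1\}$ and the $W_a$ partition $A\cup A^*$, $Y$ and $W$; hence $\Delta\{C_a\}=\pm(D\setminus B)\cup\pm B=\pm D$, as required.

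The substantive part, as in the analogous propositions, is to check that each $C_a$ is a genuine cycle, i.e.\ that its $\ell$ vertices are distinct modulo $mn=4\ell\nu m$. I would use the monotonicity in Lemma~\ref{lemma:dividesn:1}: the chain $a>w_{a,t}\geq w_{a,t+1}+2m$ makes the odd-indexed middle vertices strictly decreasing and the even-indexed ones strictly increasing, and the bound $a>w_{a,t}$ keeps every odd-indexed vertex strictly below $c_{a,1}=a^*$; since no $w_{a,t}$ is divisible by $m$, even- and odd-indexed vertices cannot coincide, and with $a^*-a\geq2$ (property~\ref{lemma:dividesn:1:2b}) the entire ``positive block'' $c_{a,0},\dots,c_{a,\ell-3}$ is injective and lies in $[0,mn/2)$. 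The one feature not present in Proposition~\ref{part size 2l} is that $a,a^*$ are now large, with $a^*\leq mn/2-1$; the terminal vertices built from $-y_a$ therefore reduce to residues above $mn/2$ and cannot meet the positive block, because $mn-a^*>mn/2$ whereas $\max Y$ is of order $6\nu m$, which is below $mn/2=2\ell\nu m$ for $\ell\geq5$. The few remaining potential coincidences are ruled out by $a^*-a\geq 2$, $1\notin A_{-1}\cup A_1$ and $A\cap Y=\varnothing$. I expect this range separation to be the main obstacle, and I would confirm the degenerate small cases ($\ell=5$, where $W=\varnothing$, and $m=3$, where some index intervals vanish) by direct inspection.
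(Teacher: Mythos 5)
Your proposal is correct and follows essentially the same route as the paper: you invoke Lemma~\ref{lemma:dividesn:1} for the partition of $D$, build the cycles $C_a$ with exactly the same vertex formulas (initial triple $(0,a^*,a^*-a)$, terminal pairs $(-1,-y_a)$ and $(1,-y_a+1)$, and the alternating middle block driven by the $w_{a,t}$), count the differences identically via condition~\ref{lemma:dividesn:1:2}, and verify injectivity of the vertices with the same monotone chain based on $a>w_{a,t}\geq w_{a,t+1}+2m$ and $a^*-a\geq 2$. The minor presentational differences (your parity/range-separation remarks for the middle and terminal vertices versus the paper's single descending chain) do not constitute a different method.
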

\begin{proof} Set $D=[1, mn/2]\setminus ([1,n/2]\cdot m)$, let $n=4\ell\nu$ with $\nu\geq1$, and set 
$q=2(m-1)\nu$, noting that $q$ is the size of the difference family to be constructed. Also, set $\lambda=(\ell-3)/2$ and
let $\epsilon=0$ or $1$ according to whether $\lambda$ is even or odd.
By Lemma \ref{lemma:dividesn:1}, there is a partition of
$D=[1, mn/2]\setminus ([1,n/2]\cdot m)$ into seven subsets,
$A_i$, $A_i^*$, for $i=\pm 1$, and $B$, $Y$, $W$ which satisfy the conditions
\ref{lemma:dividesn:1:1}--\ref{lemma:dividesn:1:4} of Lemma \ref{lemma:dividesn:1}.

By condition \ref{lemma:dividesn:1:3}, we can write
$Y=\{y_a, y_a-1\mid a\in A_1\cup A_{-1}\}$.
Now, let $\mathcal{F}=\{C_a\mid a\in A_1\cup A_{-1}\}$, with 
$C_a = (c_{a,0}, c_{a,1}, \ldots, c_{a,\ell-1})$, 
be a set of $q$ closed trails of length $\ell$ defined as follows:
\begin{align*}
(c_{a,0}, c_{a,1}, c_{a,2}) &= (0,a^*, a^*-a),\\
(c_{a,\ell-2}, c_{a,\ell-1}) &= 
\begin{cases}
  (-1, -y_a) & \text{if $a\in A_{1}$},\\
  (1, -y_a + 1) & \text{if $a\in A_{-1}$},
\end{cases}\\
c_{a, u} &= a^*-a +
\begin{cases}
  w_{a, \frac{u-1}{2}} + \frac{u-3}{2}m     & \text{if $u\in[3, \ell-4]$ is odd}, \\
  \frac{u-2}{2}m       & \text{if $u\in[4, \ell-3]$ is even},  
\end{cases}
\end{align*}
We claim that $\mathcal{F}$ is the desired difference family, that is, 
$\Delta \mathcal{F} = \mathbb{Z}_{mn}\setminus(m\mathbb{Z}_{mn})$ and the vertices of each $C_a$ are pairwise distinct.

For   $i=\pm1$ and $a\in A_{i}$, we have that
\[\Delta C_a = \pm\left\{a, a^*, y_a, y_a-1, c_{a,\ell-3}+i\right\}\ \cup\ W_a.
\] 
Since
$c_{a,\ell-3} = a^*- a + (\lambda-1) m$, by condition \ref{lemma:dividesn:1:2}
it follows that $\{c_{a,\ell-3}+i, \mid i=\pm1, a\in A_{i}\} = B$, therefore 
$\Delta \mathcal{C} = \pm D = \mathbb{Z}_{mn}\setminus(m\mathbb{Z}_{mn})$.

Finally, considering that, 
by conditions \ref{lemma:dividesn:1:2b} and \ref{lemma:dividesn:1:4} 
of Lemma \ref{lemma:dividesn:1},
$m < w_{a,1} < a$, $w_{a,t}\geq w_{a,t+1} + 2m$, and $a^*-a\geq 2$, for every $a\in A_1\cup A_{-1}$ and $t\in [1, \lambda-2]$,
it follows that
\begin{align*}
  mn/2> a^* =c_{a,1} &> a^*-a + w_{a,1} = c_{a,3} >  c_{a,5} > \ldots > c_{a,\ell-6} \\
    &> c_{a,\ell-4} =  a^*-a + w_{a,\lambda-1} + (\lambda-2)m \\
  &> a^*-a  +  (\lambda-1)m = c_{a, \ell-3} > c_{a, \ell-5} > \ldots > c_{a,6}  \\
  &> c_{a,4} = a^*-a + 2m > a^*-a = c_{a,2}\geq 2
\end{align*}
and this guarantees that each $C_a$ is a cycle.
\end{proof}

\begin{ex}
Let $\ell=7$, $m=5$, and $n=28$, so that $\nu=1, \lambda=2, q=8$.
We have 
\begin{align*}
A'_0 & = &\{51,52\},    \quad  A'_5 & =&\{57,58\},    \quad A'_1 & = &\{61,62\},   \quad A'_6 & = &\{68,69\}, \\
A'_2 & = &\{53\},         \quad  A'_7 & =&\{56\},         \quad A'_3 & = &\{54\},        \quad A'_8 & = &\{59\}, \\
&&                                        A'_4 & = &\{63,64\},    \quad A'_9 & =&\{66,67\},    \quad                                   \quad & 
\end{align*}
so that $A_1=\{51,52,61,62\},A_1^*=\{57,58,68,69\},A_{-1}=\{53,54,63,64\},$ and $A_{-1}^*=\{56,59,66,67\}.$

Also, $B=[6,9]\cup[11,14]$ and $Y=[1,4]\cup[16,19]\cup[21,24]\cup[26,29]$, so that $W=[31,49]\setminus \{35,40,45\}$. For the sets $W_a$, $a\in A_1\cup A_{-1}$ we can choose for instance
\begin{align*}
W_{51}&=&\{36,31\},\quad y_{51}&=&2   \quad W_{52}&=&\{37,32\},\quad y_{52}&=&4   \quad \quad\\
W_{61}&=&\{38,33\},\quad y_{61}&=&17 \quad W_{62}&=&\{39,34\},\quad y_{62}&=&19 \quad \quad\\  
W_{53}&=&\{46,41\},\quad y_{53}&=&22 \quad W_{54}&=&\{47,42\},\quad y_{54}&=&24\quad \quad\\  
W_{63}&=&\{48,43\},\quad y_{63}&=&27 \quad W_{64}&=&\{49,44\},\quad y_{64}&=&29 \quad \quad
\end{align*}
The $(mn, n, \sC_{\ell})$-DF we obtain from this choice consists of the following eight cycles.
\begin{align*}
C_{51} &=(0,58,7,43,12,-1,-2 )&\quad & C_{52}=(0,57,5,42,10,-1,-4)\\
C_{61} &=(0,69,8,46,13,-1,-17 )&\quad & C_{62}=(0,68,6,45,11,-1,-19)\\
C_{53} &=(0,56,3,49,8,1,-21 )&\quad & C_{54}=(0,59,5,52,10,1,-23)\\
C_{63} &=(0,67,4,52,9,1,-26 )&\quad & C_{64}=(0,66,2,51,7,1,-28).
\end{align*}
\end{ex}

\begin{lemma}\label{lemma:dividesn:2} 
Let $\ell = 2\lambda+3\geq 5$ be odd, let $m\ge4$ be even and 
$n=4\ell\nu$ with $\nu\geq1$. 
Then there exists a partition of $[1, mn/2]\setminus ([1,n/2]\cdot m)$ into ten subsets,
$A_i, A_i^*$ for $i\in\{-2, -1,1\}$, and $B, Y_1, Y_2, W$, satisfying the following properties:
\begin{enumerate}
  \item \label{lemma:dividesn:2:1}
  $|A_{-2}| = 2\nu$, $|A_{-1}| = (m-4)\nu$, $|A_{1}| = m\nu$, $|B|= 2(m-1)\nu$,
  $|Y_1|=4\nu(m-2)$, $|Y_2|=4\nu$,  $|W|= 2(\ell-5)(m-1)\nu$;
  \item \label{lemma:dividesn:2:2}
  there is a bijection $a\in A_i\mapsto a^* \in A_i^*$ for $i\in\{-2, -1,1\}$ such that 
  $B-(\lambda-1)m= \big\{a^*-a+i\mid i\in\{-2, -1,1\}, a\in A_i\big\}$;
  \item \label{lemma:dividesn:2:2b} $a^*-a\geq 2$ for every $a\in A_{-2}\cup A_{-1}\cup A_{1}$;
  \item \label{lemma:dividesn:2:3}
  $Y_j$ can be partitioned into pairs at distance $j$, for $j\in\{1,2\}$;
  \item \label{lemma:dividesn:2:4}
  $W$ can  be partitioned into $2(m-1)\nu$ sets  $\{W_a\mid a\in A_{-2}\cup A_{-1}\cup A_{1}\}$ each of size $\ell-5$ such that  
  \begin{enumerate}
      \item  $W_a = \big\{w_{a, t}, w_{a, t}-m \mid t\in[1, \lambda-1]\big\}$, and
      \item  $a> w_{a, t} \geq w_{a, t+1} + 2m$ for every $t\in[1, \lambda-2]$.
  \end{enumerate}
\end{enumerate}
\end{lemma}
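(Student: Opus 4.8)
The plan is to follow the template of Lemma~\ref{lemma:dividesn:1}, adapting it to even $m$. I first set $\epsilon\in\{0,1\}$ with $\epsilon\equiv\lambda\pmod 2$ and $q=2(m-1)\nu$, and build all six index sets from blocks $A'_h=I_h+J_h\cdot m$ and their translates $A'_h+\tau_h$, for a list of intervals $I_h,J_h$ and shifts $\tau_h$ to be recorded in a table. These blocks will then be grouped so that $A_1$ has size $m\nu$, $A_{-1}$ has size $(m-4)\nu$, and $A_{-2}$ has size $2\nu$, with the starred blocks forming $A_1^*,A_{-1}^*,A_{-2}^*$. I take $B=[1,m-1]+[\lambda-1,\lambda-2+2\nu]\cdot m$, so that the target of Property~\ref{lemma:dividesn:2:2} becomes $B-(\lambda-1)m=[1,m-1]+[0,2\nu-1]\cdot m$.

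The core of the argument, and the step I expect to be hardest, is the parity bookkeeping needed to realize exactly this target. By Lemma~\ref{lem:AA*}, each block $A'_h$ produces a difference set $\{a^*-a\mid a\in A'_h\}$ equal to a sum of odd-spaced intervals in the $I$- and $J$-coordinates, translated by $\tau_h-|I_h|-|J_h|m$. Since $m$ is even, the $J$-coordinate is irrelevant modulo $2$, so the parity of $a^*-a$ is controlled entirely by the $I$-coordinate and the chosen constant. The obstruction specific to even $m$ is that each slice $[1,m-1]+jm$ contains one more odd than even value, so the two shifts $+1$ and $-1$ cannot tile it on their own; the extra shift $-2$ on the small block $A_{-2}$ is introduced precisely to correct this imbalance. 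I will choose the $\tau_h$ large enough that $a^*-a\geq2$ throughout (giving Property~\ref{lemma:dividesn:2:2b}) and so that, after translating each class by its value of $i\in\{-2,-1,1\}$, the three families partition $[1,m-1]+[0,2\nu-1]\cdot m$; checking this disjointness slice by slice, tracking even and odd residues modulo $m$, is the delicate part.

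It remains to split the complement. Writing $\overline D$ for the set of elements of $[1,mn/2]\setminus([1,n/2]\cdot m)$ not used by the six index sets or by $B$, I will argue, as in Lemma~\ref{lemma:dividesn:1}, that $\overline D$ decomposes residue class by residue class into intervals of even length. The bulk of these intervals is distributed between $Y_1$, partitioned into consecutive pairs, and $W$; a designated set of $4\nu$ elements arising from the $-2$ shift is collected into $Y_2$ and partitioned into pairs at distance $2$, one pair for each of the $2\nu$ indices of $A_{-2}$, yielding Property~\ref{lemma:dividesn:2:3}. For Property~\ref{lemma:dividesn:2:4} I place the blocks $A'_h$ high in the difference range (their $J_h$ lying above the $m$-multiples appearing in $W$), so that every index $a$ exceeds all the $W$-values assigned to it; since $W$ is a union of distance-$m$ pairs lying below the indices and $\ell-5=2(\lambda-1)$, a greedy assignment that sorts the pairs by size produces sets $W_a=\{w_{a,t},w_{a,t}-m\}$ with $a>w_{a,t}\geq w_{a,t+1}+2m$.

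Finally I verify the sizes in Property~\ref{lemma:dividesn:2:1} and that the ten sets exhaust $D$ by the count $4(m-1)\nu+2(m-1)\nu+4(m-1)\nu+2(\ell-5)(m-1)\nu=2\ell(m-1)\nu=|D|$. The degenerate parameters are harmless: when $\ell=5$ we have $\lambda=1$ and $W=\varnothing$, and when $m=4$ we have $A_{-1}=\varnothing$, and the same recipe applies with these empty sets.
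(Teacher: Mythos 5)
Your proposal identifies the right strategy --- it is essentially the paper's: build the index sets as blocks $I_h+J_h\cdot(\text{modulus})$, pair each block with a translate via Lemma~\ref{lem:AA*}, use the three shifts $+1,-1,-2$ to absorb the parity imbalance caused by $m$ being even, and split the complement into $Y_1$ (consecutive pairs), $Y_2$ (distance-$2$ pairs) and $W$ (distance-$m$ pairs placed below the indices). The closing remarks about $W$ (all of $A$ lying above all of $W$, and $U_1,U_2$ of even size allowing the grouping into $\lambda-1$ well-separated distance-$m$ pairs) and about the degenerate cases $\ell=5$, $m=4$ are also consistent with what the paper does.

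However, there is a genuine gap: the entire substance of the lemma is the explicit table of $I_h$, $J_h$, $\tau_h$ together with the verification that the six shifted difference sets tile $B-(\lambda-1)m$ exactly while the ten sets remain pairwise disjoint, and you never produce that table --- you explicitly defer it (``to be recorded in a table'', ``checking this disjointness slice by slice \ldots is the delicate part''). Worse, the two concrete choices you do commit to diverge from what actually works. First, you take $B=[1,m-1]+[\lambda-1,\lambda-2+2\nu]\cdot m$, imported from Lemma~\ref{lemma:dividesn:1}, so your target is the contiguous set $[1,m-1]+[0,2\nu-1]\cdot m$; the paper instead takes $B=[1,2\nu]_o\cdot\mu+(\lambda-1)m+\big([-m+1,m-1]\setminus\{0\}\big)$ with $\mu=(2-\epsilon)m$, which is spread over non-consecutive $m$-slices. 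This is not a cosmetic difference: Lemma~\ref{lem:AA*} produces difference sets of the form $[1,2|I|]_o+[1,2|J|]_o\cdot\mu+c$, which within each slice are constant-parity arithmetic progressions and which touch only every other $\mu$-slice, and you must reconcile this rigid structure with the prescribed sizes $|A_1|=m\nu$, $|A_{-1}|=(m-4)\nu$, $|A_{-2}|=2\nu$ (note $|A_1|\neq(m-1)\nu$, so the naive ``$A_1$ covers the odd slices, $A_{-1}\cup A_{-2}$ the even slices'' split of a contiguous target does not match the cardinalities). You give no evidence that your contiguous $B$ is compatible with these constraints. Second, you build the blocks as $I_h+J_h\cdot m$, whereas the paper uses the modulus $\mu=(2-\epsilon)m$; the factor $2-\epsilon$ is what makes the blocks, $B$, $Y_1$ and $Y_2$ interlock for both parities of $\lambda$. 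As written, the proposal is a plan whose hardest step --- the one you yourself flag as delicate --- is not carried out, and the parameters you have fixed make it doubtful that it could be carried out without substantial modification.
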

\begin{proof}   Let $\epsilon=0$ or $1$ according to whether $\lambda$ is even or odd, and set 
 $q=2(m-1)\nu$ and $\mu=(2-\epsilon)m$.  We start by defining the intervals $I_h, J_h$ and the integer $\tau_{h}$, for 
 $h\in\{-2,-1,1\}\times\{1,2\}$, as follows:
\[
  J_h = \big[ 2^\epsilon(\ell-2)\nu, 2^\epsilon(\ell-2)\nu+\nu-1\big],\;\;\text{and}
\]
\[
   \begin{array}{|c|c|c|c|c|}
     \hline \rule{0pt}{1\normalbaselineskip}
        h     & \multicolumn{2}{|c|}{I_h}                            
              & \multicolumn{2}{|c|}{\tau_h}  \\[0.5ex] 
     \hline \hline \rule{0pt}{1\normalbaselineskip}
       (-1,1) &  \multicolumn{2}{|c|}{\left[\frac{m}{2}+2, m-1\right]} 
              &  \multirow{8}{*}{$\mu\nu\; + $}  
              & -\frac{m}{2} -1  \\[0.5ex]  
     \cline{1-3}  \cline{5-5} \rule{0pt}{1\normalbaselineskip}
        (1,1) & \multicolumn{2}{|c|}{\left[1, \frac{m}{2}+1\right]}    
              &&  +\frac{m}{2} -2  \\ [0.5ex] 
     \cline{1-3}  \cline{5-5} \rule{0pt}{1\normalbaselineskip}
       (-1,2) & \multirow{5}{*}{$(2\nu)^\epsilon m\; +$} 
              &\left[\frac{m}{2}+1,m-2\right] 
              &&  -\frac{m}{2}  \\[0.5ex] 
     \cline{1-1} \cline{3-3} \cline{5-5}\rule{0pt}{1\normalbaselineskip}
       (1,2) & 
             &\left[1,\frac{m}{2}-1\right]  
             &&   +\frac{m}{2}-1 \\[0.5ex]  
     \cline{1-1} \cline{3-3} \cline{5-5} \rule{0pt}{1\normalbaselineskip} 
      (-2,1) &   
             & \{\frac{m}{2}\}          
             &&  -1   \\[0.5ex]  
     \cline{1-1} \cline{3-3} \cline{5-5} \rule{0pt}{1\normalbaselineskip}
      (-2,2) &   
             & \{m-1\}                      
             && 0    \\[0.5ex]  
     \hline
   \end{array}
\]
 For $h\in\{-2,-1,1\}\times\{1,2\}$, set $A_{h} =  I_{h} + J_{h}\cdot \mu$
 and $A^*_{h} = A_h + \tau_h$. 
 Also, let $B$ and $Y_j=Y'_j \cup Y''_j$, for $j\in\{1,2\}$, the sets defined as follows: 
 \begin{align*}
   B   &= [1,2\nu]_o\cdot\mu + (\lambda-1)m + \big([-m+1,m-1]\setminus\{0\}\big),\\
   Y'_j&= [1,2\nu]_o\cdot\mu + (\lambda-1)m +2m{\nu}^\epsilon +
   \begin{cases}
     [-m+1, -2]\cup [2,m-1] & \text{if $j=1$},\\
     \{-1, 1\} & \text{if $j=2$},\\     
   \end{cases}\\
   Y''_j&= [1,2\nu]_o\cdot{m} + (4\nu +\lambda -\epsilon)m +
   \begin{cases}
     [-m+1, -2]\cup [2,m-1] & \text{if $j=1$},\\
     \{-1, 1\} & \text{if $j=2$}.    
   \end{cases}
 \end{align*}
 It is tedious but not difficult to check that 
 \begin{equation}\label{A'_h}
   \text{the sets $A_h$, $A^*_{k}$, $B$, $Y_1$ and $Y_2$ are pairwise disjoint.}
 \end{equation} 
 Also, by Lemma \ref{lem:AA*}, there is a bijection $a\in A_h \mapsto a^* \in A^*_h$  such that
 \begin{align*}
  \{a^* - a\mid a\in A_h\} &= [1, 2\nu]_o\cdot \mu +
     \begin{cases}
       \left[-m +2, -4 \right]_e 
       &    \text{if $h=(-1,1)$}, \\  
       \left[-2, m-2\right]_e 
       &  \text{if  $h=(1,1)$}, \\  
       \left[-m+3, -3\right]_o 
       &    \text{if $h=(-1,2)$}, \\ 
       \left[1, m-3\right]_o
       &  \text{if  $h=(1,2)$}, \\ 
       \{-1\}
       &  \text{if  $h=(-2,1)$}, \\  
       \{0\}      
       & \text{if  $h=(-2,2)$}.        
\end{cases}
\end{align*}
Recalling \eqref{A'_h}, it is not difficult to check that the sets $B$, $Y_1$, $Y_2$,
$A_{i} = A_{(i,1)}\ \cup\ A_{(i,2)}$ and $A^*_{i} = A^*_{(i,1)}\ \cup\ A^*_{(i,2)}$, for $i\in\{-2, -1, 1\}$, 
satisfy conditions \ref{lemma:dividesn:2:1}--\ref{lemma:dividesn:2:2b}.
Furthermore, since $Y_1$ has size $4\nu(m-2)= 2|A_{-1} \cup A_{1}|$ and 
it is the disjoint union of $4\nu$ intervals of size $m-2\equiv 0\pmod{2}$, $Y_1$ can be partitioned into 
$|A_{-1} \cup A_{1}|$ pairs of consecutive integers, hence 
$Y_1=\{y_a, y_a-1 \mid  a\in A_{-1}\cup A_1\}$. Similarly, since $Y_2$ is the disjoint union of 
$2\nu = |A_{-2}|$ pairs at distance two, we can write $Y_2=\{y_a, y_a-2 \mid  a\in A_{-2}\}$;
hence, condition \ref{lemma:dividesn:2:3} holds.

Finally, denoting by $W$ the set of all elements of $[1, mn/2]\setminus ([1,n/2]\cdot m)$ not lying in any of the sets
defined above, we have that $W$ has size $2(\ell-5)(m-1)\nu = (\ell-5)|A_{-2}\cup A_{-1}\cup A_{1}|$. Also, 
$W = (U_1\ \cup\ U_2)m + [1,m-1]$,
where $U_1 = [0, \lambda -\epsilon -1]$ and $U_2 = [\lambda - \epsilon +6\nu, (2\ell-4)\nu-1]$.
Since $U_1$ and $U_2$ have even size, and $|U_1\ \cup\ U_2| = 2(\ell-5)\nu$, there exists a partition
$\{W_a\mid a\in A\}$ of $W$ satisfying condition \ref{lemma:dividesn:2:4}, and this completes the proof.
\end{proof}

\begin{prop} \label{part size 0 mod 4ell - m even}
Let $\ell \geq 5$ be odd, and let $n\equiv 0 \pmod{4\ell}$. 
There exists a $(mn,n,{\sC}_\ell)$-DF for every even $m\geq 4$.
\end{prop}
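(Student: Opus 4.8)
The plan is to follow the proof of Proposition~\ref{part size 0 mod 4ell - m odd} almost verbatim, substituting the partition of Lemma~\ref{lemma:dividesn:2} (built for even $m$) for that of Lemma~\ref{lemma:dividesn:1}. Writing $n=4\ell\nu$, $\lambda=(\ell-3)/2$, $\epsilon\in\{0,1\}$ with $\epsilon\equiv\lambda\pmod 2$, and $q=2(m-1)\nu$, I first apply Lemma~\ref{lemma:dividesn:2} to split $D=[1,mn/2]\setminus([1,n/2]\cdot m)$ into the ten sets $A_i,A_i^*$ ($i\in\{-2,-1,1\}$), $B$, $Y_1$, $Y_2$ and $W$. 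By condition~\ref{lemma:dividesn:2:3} I record the $Y$-pairs as $Y_1=\{y_a,y_a-1\mid a\in A_{-1}\cup A_1\}$ and $Y_2=\{y_a,y_a-2\mid a\in A_{-2}\}$, and by condition~\ref{lemma:dividesn:2:4} I obtain the $W$-blocks $W_a=\{w_{a,t},w_{a,t}-m\mid t\in[1,\lambda-1]\}$ indexed over $A_{-2}\cup A_{-1}\cup A_1$.

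Next I would define, for each $a\in A_{-2}\cup A_{-1}\cup A_1$, a closed trail $C_a=(c_{a,0},\dots,c_{a,\ell-1})$ whose opening triple and interior are exactly as in Proposition~\ref{part size 0 mod 4ell - m odd},
\begin{align*}
(c_{a,0},c_{a,1},c_{a,2}) &= (0,a^*,a^*-a),\\
c_{a,u} &= a^*-a+\begin{cases} w_{a,(u-1)/2}+\tfrac{u-3}{2}m & u\in[3,\ell-4]\text{ odd},\\[0.5ex] \tfrac{u-2}{2}m & u\in[4,\ell-3]\text{ even},\end{cases}
\end{align*}
but with a three-way closing segment adapted to the three index sets,
\[
(c_{a,\ell-2},c_{a,\ell-1})=\begin{cases}(-1,-y_a) & a\in A_1,\\ (1,-y_a+1) & a\in A_{-1},\\ (2,-y_a+2) & a\in A_{-2}.\end{cases}
\]
The point of this choice is that the last three differences of $C_a$ then equal $\pm\{c_{a,\ell-3}+i,\,y_a,\,y_a-|i|\}$ for $a\in A_i$, so that $A_1,A_{-1}$ consume the distance-one pairs of $Y_1$ while $A_{-2}$ consumes the distance-two pairs of $Y_2$, matching the structure of the partition.

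With this in hand the difference count is immediate: for $a\in A_i$ one has $\Delta C_a=\pm\{a,a^*,y_a,y_a-|i|,c_{a,\ell-3}+i\}\cup\pm W_a$ with $c_{a,\ell-3}=a^*-a+(\lambda-1)m$, and condition~\ref{lemma:dividesn:2:2} gives $\{c_{a,\ell-3}+i\mid i\in\{-2,-1,1\},\,a\in A_i\}=B$. Hence $\Delta\mathcal{F}=\pm(D\setminus B)\cup\pm B=\Z_{mn}\setminus m\Z_{mn}$, so $\mathcal{F}=\{C_a\}$ realizes every required difference exactly once.

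The main obstacle, as in the odd case, is to certify that each $C_a$ is an honest cycle rather than merely a closed trail. I would reuse the monotonicity argument: conditions~\ref{lemma:dividesn:2:2b} and~\ref{lemma:dividesn:2:4} supply $m<w_{a,1}<a$, $w_{a,t}\geq w_{a,t+1}+2m$ and $a^*-a\geq 2$, which pin $c_{a,1},\dots,c_{a,\ell-3},\dots,c_{a,2}$ into a strictly decreasing run inside $[2,mn/2)$, while $c_{a,\ell-1}$ is negative. The genuinely new point is the small closing value $c_{a,\ell-2}=2$ for $a\in A_{-2}$: I must check it clashes with none of the run, i.e.\ that $a^*-a\neq 2$ on $A_{-2}$. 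This does not follow from condition~\ref{lemma:dividesn:2:2b} alone, but the explicit form of $\{a^*-a\}$ on $A_{-2}$ in Lemma~\ref{lemma:dividesn:2} forces $a^*-a\geq(2-\epsilon)m-1\geq m-1\geq 3$ there, so no collision occurs; the same ranges rule out the value $1$ for $A_{-1}$, and $y_a\geq 3$ on $Y_2$ keeps $c_{a,\ell-1}=-y_a+2$ negative and distinct from $c_{a,\ell-2}=2$. Finally I would note that the degenerate case $\ell=5$, in which $W$ is empty and $C_a$ reduces to the $5$-cycle $(0,a^*,a^*-a,c_{a,\ell-2},c_{a,\ell-1})$, is covered by the same inequalities.
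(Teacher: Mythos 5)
Your proposal is correct and follows essentially the same route as the paper: the identical trail construction (same opening triple, same interior built from the $W_a$-blocks, same three-way closing segment keyed to $i\in\{-2,-1,1\}$), the same difference count via condition~2 of Lemma~\ref{lemma:dividesn:2}, and the same monotonicity chain to certify each $C_a$ is a cycle. Your extra check that $a^*-a\geq(2-\epsilon)m-1\geq 3$ on $A_{-2}$ (so that $c_{a,\ell-2}=2$ cannot collide with $c_{a,2}$) is a detail the paper leaves implicit, and it is a welcome addition rather than a divergence.
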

\begin{proof}
Set $D=[1, mn/2]\setminus ([1,n/2]\cdot m)$, let $n=4\ell\nu$ with $\nu\geq1$, and set 
$q=2(m-1)\nu$. Also, set $\lambda=(\ell-3)/2$ and
let $\epsilon=0$ or $1$ according to whether $\lambda$ is even or odd.

By Lemma \ref{lemma:dividesn:2}, there is a partition of
$D=[1, 4m\nu]\setminus ([1,4\nu]\cdot m)$ into ten subsets
$A_i, A_i^*$ for $i\in\{-2, -1,1\}$, and $B, Y_1, Y_2, W$ satisfying the 
conditions \ref{lemma:dividesn:2:1}--\ref{lemma:dividesn:2:4} of Lemma \ref{lemma:dividesn:2}.

Set $A= A_{-2}\ \cup\ A_{-1}\ \cup\ A_{1}$
and let $\mathcal{C}=\{C_a\mid a\in A\}$, with 
$C_a = (c_{a,0}, c_{a,1}, \ldots, c_{a,\ell-1})$, 
be a set of 
$q$ closed trails of length $\ell$ defined as follows:
\begin{align*}
(c_{a,0}, c_{a,1}, c_{a,2}) &= (0,a^*, a^*-a),\\
(c_{a,\ell-2}, c_{a,\ell-1}) &= 
\begin{cases}
 (1, -y_a + 1) & \text{if $a\in A_{-1}$},\\
 (-1, -y_a)    & \text{if $a\in A_{1}$}, \\
 (2, -y_a + 2) & \text{if $a\in A_{-2}$}, \\
\end{cases}\\
c_{a, u} &= a^*-a +
\begin{cases}
  w_{a, \frac{u-1}{2}} + \frac{u-3}{2}m     & \text{if $u\in[3, \ell-4]$ is odd}, \\
  \frac{u-2}{2}m       & \text{if $u\in[4, \ell-3]$  is even},  
\end{cases}
\end{align*}
We claim that $\mathcal{C}$ is the desired set of base cycles, that is, 
$\Delta \mathcal{C} = \mathbb{Z}_{mn}\setminus(m\mathbb{Z}_{mn})$ and the vertices of each $C_a$ are pairwise distinct. For every $i\in\{-2, -1,1\}$ and $a\in A_{i}$, we have that
\begin{align*}
  \Delta C_a &= \pm\left\{a, a^*, y_a, y_a-|i|, c_{a,\ell-3} -  c_{a,\ell-2}\right\}\ \cup\ W_a \\
             &= \pm\left\{a, a^*, y_a, y_a-|i|, a^*- a + (\lambda-1) m + i\right\}\ \cup\ W_a.
\end{align*}  
By conditions \ref{lemma:dividesn:2:1}-\ref{lemma:dividesn:2:4}, it follows that 
$\Delta \mathcal{C} = \pm D = \mathbb{Z}_{mn}\setminus(m\mathbb{Z}_{mn})$.

Finally considering that,
by conditions \ref{lemma:dividesn:2:2b} and \ref{lemma:dividesn:2:4} 
of Lemma \ref{lemma:dividesn:2},
$m < w_{a,1} < a$, $w_{a,t}\geq w_{a,t+1} + 2m$,  and $a^* - a \geq 2$, 
for every $a\in A$ and $t\in [1, \lambda-2]$,
it follows that
\begin{align*}
  mn/2> a^* =c_{a,1} &> a^*-a + w_{a,1} = c_{a,3} >  c_{a,5} > \ldots > c_{a,\ell-6} \\
    &> c_{a,\ell-4} =  a^*-a + w_{a,\lambda-1} + (\lambda-2)m \\
  &> a^*-a  +  (\lambda-1)m = c_{a, \ell-3} > c_{a, \ell-5} > \ldots > c_{a,6}  \\
  &> c_{a,4} = a^*-a + 2m > a^*-a = c_{a,2}\geq 2
\end{align*}
and this guarantees that each $C_a$ is a cycle.
\end{proof}

\begin{ex} Take $\ell=7, m=4$ and $n=28$, so that so that $\nu=1, \lambda=2, q=6$.
We have 
\begin{align*}
A_{(-1,1)} & = &\varnothing , \quad  A_{(-1,1)}^* & =&\varnothing , \quad A_{(1,1)} & = &\{41,42,43\}, \quad A_{(1,1)}^* & = &\{49,50,51\}, \\
A_{(-1,2)} & = &\varnothing, \quad  A_{(-1,2)}^* & =&\varnothing, \quad A_{(1,2)} & = &\{45\}, \quad A_{(1,2)}^* & = &\{54\}, \\
A_{(-2,1)} & = &\{46\}, \quad A_{(-2,1)}^* & = &\{53\}, \quad A_{(-2,2)} & = &\{47\}, \quad A_{(-2,2)}^* & = &\{55\},
\end{align*}
so that $A_1=\{41,42,43,45\}$, $A_1^*=\{49,50,51,54\}$, $A_{-2}=\{46,47\}$, $A_{-2}^*=\{53,55\}$, while in this case $A_{-1}=A_{-1}^*=\varnothing$.

Also, $B=[9,11]\cup[13,15]$, $Y_1'=[17,18]\cup[22,23]$, $Y_1''=[25,26]\cup[30,31]$, and $Y_2'=\{19,21\}$, $Y_2''=\{27,29\}$,
so that $W=([1,7]\setminus \{4\})\cup ([33,39]\setminus \{36\}) $. For the sets $W_a$ 
and elements $y_a$, $a\in A_1\cup A_{-2} (\cup A_{-1})$ we can choose for instance\begin{align*}
W_{41}&=&\{5,1\},\quad y_{41}&=&18  \quad W_{42}&=&\{6,2\},\quad y_{42}&=&23   \quad \quad\\
W_{43}&=&\{7,3\},\quad y_{43}&=&26 \quad W_{45}&=&\{37,33\},\quad y_{45}&=&31 \quad \quad\\  
W_{46}&=&\{38,34\},\quad y_{46}&=&19 \quad W_{47}&=&\{39,35\},\quad y_{47}&=&27 \quad \quad 
\end{align*}
The $(mn, n, \sC_\ell)$-DF we obtain from this choice
consists of the following six cycles.
\begin{align*}
C_{41} &=(0,51,10,15,14,-1,-18 )&\quad & C_{42}=(0,50,8,14,12,-1,-23)\\
C_{43} &=(0,49,6,13,10,-1,-26 )&\quad & C_{45}=(0,54,9,46,13,-1,-31)\\
C_{46} &=(0,53,7,45,11,2,-19 )&\quad & C_{47}=(0,55,8,47,12,2,-27)
\end{align*}
\end{ex}

\section{Concluding remarks}\label{conclusion}
Recall that Corollary~\ref{nonexistence corollary} gives certain definite exceptions to the existence of a cyclic $\ell$-cycle system of $K_m[n]$.  The reader may wonder if these exceptions can be ruled out if we consider regular
$\ell$-cycle systems  under a group $G$ which is not necessarily cyclic. 
The following two results will partially answer this question. 
The first provides us with a necessary condition for the existence of 
a $G$-regular $\ell$-cycle system of $K_m[n]$ under the assumption that the $G$-stabilizer of
each cycle has odd order.

\begin{thm}\label{nonexistence G-regular} Let $\cB$ be a $G$-regular $\ell$-cycle system of $K_m[n]$. If each cycle of $\cB$ has a $G$-stabilizer of 
odd order, then either $m\not\equiv 2,3 \pmod{4}$ or $n\not\equiv 2 \pmod{4}$. 
\end{thm}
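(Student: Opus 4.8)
The plan is to show that the residues $m\equiv 2,3\pmod 4$ together with $n\equiv 2\pmod 4$ are incompatible with a $G$-regular system all of whose cycle-stabilizers have odd order, by transplanting the difference-parity argument behind Theorem~\ref{nonexistence} from $\Z_{mn}$ to the (possibly non-abelian) group $G$. First I would record two structural consequences of the hypotheses. Since $n\equiv 2\pmod 4$, a Sylow $2$-subgroup of $N$ has order $2$, so $N$ contains an involution $\nu$; realizing $K_m[n]$ as $\Cay[G:G\setminus N]$ with the parts taken to be the right cosets of $N$, right translation by an element $g$ sends an edge $\{x,y\}$ to $\{xg,yg\}$ and preserves the difference $xy^{-1}$. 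As $\nu\in N$, the pair $\{x,x\nu\}$ is never an edge, so $\nu$ acts on the edge set without fixed edges. Secondly, the $G$-stabilizer of a cycle $C$ acts semiregularly on its $\ell$ vertices as a group of symmetries of $C_\ell$; an odd-order such group contains no reflection (a reflection is an involution) and is therefore a cyclic group of rotations whose order divides $\ell$. In particular $\nu\notin\mathrm{Stab}_G(C)$ for every $C\in\cB$, so translation by $\nu$ is a fixed-point-free involution on $\cB$, pairing each cycle $C$ with an edge-disjoint cycle $C\nu$.

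With these in hand I would run a counting argument in which the odd-stabilizer hypothesis plays exactly the role that ``no short orbit is fixed by the central involution'' plays in the cyclic case. Because every stabilizer is odd and $|G|=mn$ is even, each cycle-orbit has length $mn/s$ with $s$ odd and $s\mid\ell$, so every orbit length has the same $2$-adic valuation $|mn|_2=|m|_2+1$. Using $n\equiv 2\pmod 4$, so that $n/2$ is odd, I would reduce all the relevant parities to $m\bmod 4$: concretely I would track the list of differences $G\setminus N$ and, following Theorem~\ref{nonexistence}, isolate the finer invariant that measures how these differences split under the pairing $d\mapsto d^{-1}$ in combination with the $\nu$-pairing of the cycles. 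The aim is to show that for $m\equiv 2,3\pmod 4$ this invariant is forced to one parity by the orbit decomposition and to the opposite parity by the direct enumeration of $G\setminus N$, precisely as in cases (c) and (d) of Theorem~\ref{nonexistence}. This would reprove Corollary~\ref{nonexistence corollary} and upgrade it from $\Z_{mn}$ to an arbitrary $G$ under the odd-stabilizer assumption, giving the stated dichotomy.

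The main obstacle is that the obvious global invariants carry no information: the total number of edges and the number of cycles paired off by $\nu$ are automatically even, while the full count $|\cB|=m(m-1)n^2/2\ell$ depends on $\ell$, which the conclusion does not. The decisive step is therefore to pin down the correct $\ell$-independent parity — morally the difference-list parity of Theorem~\ref{nonexistence} rather than a naive edge or cycle count — and this is where the argument is delicate. A secondary difficulty is the non-abelian case: there the sets of edges of constant difference need not be $\nu$-invariant unless $\nu$ is central, so I expect either to select a central involution of $G$ (arguing that one exists from the Sylow structure in the relevant range of $m$) or to pass to a quotient in which the chosen involution becomes central, and only then to carry out the $2$-adic bookkeeping.
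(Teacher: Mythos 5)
Your preliminary observations agree with the paper's opening moves: an odd-order $G$-stabilizer of a cycle contains no reflection, hence is a cyclic group of rotations of order dividing $\ell$, and the edge-fixing argument you allude to actually shows that \emph{every} involution of $G$ lies in $N$ (not merely that $N$ contains one). But the proposal stops exactly where the proof must begin: you yourself write that the decisive step is ``to pin down the correct $\ell$-independent parity'' and you do not produce it, so there is a genuine gap. The invariant is not attached to the involution $\nu\in N$ nor to the pairing $C\mapsto C+\nu$. Instead, one first shows that a Sylow $2$-subgroup of $G$ is cyclic (here the fact that all involutions lie in $N$ is essential: since $m\equiv 2,3$ and $n\equiv 2\pmod 4$ give $|G|\not\equiv 0\pmod 8$, the only alternative is a Klein four-group, which would then sit inside $N$ and force $4\mid n$). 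By the Cayley normal $2$-complement theorem $G$ then has a subgroup $H$ of index $2$, and the quantity to track is $\big|(G\setminus N)\setminus H\big|$. On one hand, writing each base cycle as $s_i$ translates of a segment of length $\lambda_i=\ell/s_i$ under a generator $x_i$ of its stabilizer, the partial differences along that segment sum to $x_i$, which has odd order and therefore lies in $H$; since $G/H\cong\Z_2$ is abelian, each partial difference list meets $G\setminus H$ in an even number of elements, and the $\pm$ pairing of the lists then forces $\big|(G\setminus N)\setminus H\big|\equiv 0\pmod 4$. On the other hand, computing $|N/(H\cap N)|$ gives $\big|(G\setminus N)\setminus H\big|=\lfloor m/2\rfloor n\equiv 2\pmod 4$ under the assumed congruences --- the contradiction. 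None of this machinery (the subgroup $H$, the partial-sum identity $c_{i,\lambda_i}=c_{i,0}+x_i$, the mod-$4$ count) appears in your outline.

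Your proposed remedy for the non-abelian case also points in the wrong direction: a group with cyclic Sylow $2$-subgroups need not have a central involution (take any group of order $2k$ with $k$ odd and trivial centre), and no centrality is required. What is required is an index-$2$ subgroup, and cosets of $H$ behave well under the partial sums precisely because $G/H$ has order $2$ and is automatically abelian. Without constructing $H$ and the associated parity of $\big|(G\setminus N)\setminus H\big|$, the ``$2$-adic bookkeeping'' you describe cannot be carried out.
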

\begin{proof} We assume for a contradiction that $m\equiv 2,3 \pmod{4}$ and $n\equiv 2 \pmod{4}$, hence $|G|\not\equiv 0 \pmod{8}$.
Let $\cB$  be a $G$-regular $\ell$-cycle system of $K_m[n]$. Without loss of generality, we can assume  that
  \begin{enumerate}
    \item $K_m[n] = \Cay[G:G\setminus N]$ where $N$ is a subgroup of $G$ of order $n$, and 
    \item $C+g\in \cB$ for every $C\in\cB$ and $g\in G$. 
  \end{enumerate}
  
  We first show that every element of $G$ of order $2$ (i.e., involution of $G$) belongs to $N$. In fact, if $G\setminus N$ contains an element $y$ of order $2$, then the edge $\{0,y\}$ must be contained in some cycle of $\cB$, say $C$. Hence the edge $\{0,y\}$ belongs to $C+y$, which is still a cycle of $\cB$. Since $\cB$ is a cycle system of $K_m[n]$, every edge of $K_m[n]$ is contained in exactly one cycle of  $\cB$.  Therefore $C+y = C$, meaning that $y$ belongs to the $G$-stabilizer of $C$, which therefore has even order in contradiction to our assumption. 
  
  We now show that a Sylow $2$-subgroup of $G$, say $P$, is cyclic. If $|G|\equiv 2 \pmod{4}$, then $|P|=2$, and hence $P$ is cyclic. Since $|G|\not\equiv 0 \pmod{8}$, it is left to consider the case where  $|G|\equiv 4 \pmod{8}$, hence $P$ is either cyclic or isomorphic to $\Z_2\times \Z_2$. But in the latter case, all non-zero elements of $P$ have order $2$, hence $P$ is a subgroup of $N$ which therefore has order divisible by $4$ contradicting the assumption. We have thus proven that all Sylow $2$-subgroups of $G$ are cyclic.
  
  From the above arguments, we can prove that $G$ has a subgroup of index $2$. Indeed,
  since all Sylow $2$-subgroups $P$ are cyclic we can apply the Cayley normal 2-complement theorem, so that $G$ has a normal subgroup $S$ of order $|G|/|P|$ with $G=P+S$.
  Since the factor group $G/S$ is isomorphic to $P$, it is cyclic so  it has a subgroup
  $H/S$ of index $2$, and $H$ is therefore a subgroup of $G$ of index $2$. 
  
  Also, if $H$ has even size, then it contains all the involutions of $G$. Indeed, denoting by $y$ any element of $G$ of order $2$, then $y$ belongs to a suitable Sylow $2$-subgroup of $G$, say $Q$. Since $Q$ is cyclic, $y$ is the only involution of $Q$. Considering that $H\cap Q$ is a Sylow $2$-subgroup of $H$, then $|H\cap Q|\geq 2$ is even, hence $y\in H\cap Q$. 
  
  Finally, recalling that $|G|=mn$ with $m\equiv 2,3 \pmod{4}$ and $n\equiv 2 \pmod{4}$, we can show that
  \begin{equation}\label{N/(H cap N)}
    |N/(H\cap N)| = 
    \begin{cases} 
      1 & \text{if $m\equiv 2 \pmod{4}$}, \\
      2 & \text{if $m\equiv 3 \pmod{4}$}.
    \end{cases}
  \end{equation} 
  Since $H$ has index $2$ in $G$, then
$|N/(H\cap N)|\in\{1,2\}$. 
  If $m\equiv 2\pmod{4}$, then $|H|\equiv 2\pmod{4}$. Since $|N|=n\equiv 2\pmod{4}$, 
  by the Cayley normal 2-complement theorem we have that $N= N' + \{0, y\}$ where $N'$ is a subgroup of index $2$ and $y$
  is any involution of $N$. Since $H$ contains all  the involutions of $G$, and $|N'|=|N|/2$ is odd , then
  $N', \{0, y\}\subset H$, that is, $N= N' + \{0, y\} \subset H$; therefore $H\cap N = N$ and 
  $|N/(H\cap N)|=1$.
  If $m\equiv 3\pmod{4}$, then 
  $|H|$ is odd and $|N/(H\cap N)|=2$.
  
  Let $\cF=\{C_1, C_2, \ldots, C_t\}$ be a complete system of representatives for the $G$-orbits of $\cB$, 
  let $S_i=\{g\in G\mid C_i+g=C_i\}$ be the $G$-stabilizer of $C_i$, 
  and set $s_i=|S_i|$ for $i\in[1,t]$. 
  Since by assumption $s_i$ is odd, and recalling that
  the automorphism group of an $\ell$-cycle is the dihedral group $\mathbb{D}_{2\ell}$ of size $2\ell$, 
  then each $S_i$ is isomorphic to a
  subgroup of $\mathbb{D}_{2\ell}$ and $s_i$ is a divisor of $\ell$, for $i\in[1,t]$.  
  Also, considering that all subgroups of 
  $\mathbb{D}_{2\ell}$ of odd size are cyclic, then each $S_i$ is cyclic. Therefore, letting $\lambda_i = \ell/s_i$ and
  $C_i=(c_{i,0}, c_{i,1}, \ldots, c_{i,\ell_i-1})$, we have that 
  \begin{equation}\label{C+x_i=C}
  c_{i, a\lambda_i + b} = c_{i, b} + ax_i
  \end{equation}
  for every $a\in [0, s_i-1]$ and $b\in [0, \lambda_i-1]$,
  where $x_i$ is a suitable generator of $S_i$.
  
  Now set $D_i= \{\delta_{i,j}\mid j\in[0,\lambda_i-1]\}$ 
  where $\delta_{i,j} = c_{i, j+1} - c_{i,j}$ 
  for every $i\in[1,t]$ and $j\in [0, \lambda-1]$. Since every edge 
  of $K_m[n]= \Cay[G:G\setminus N]$ is contained in exactly one cycle of 
  $\cB$ and recalling that any translation preserves the differences,
  it follows that 
  \begin{equation}\label{partialdifferences}
   \text{$\big\{D_i, - D_i \mid i\in[1,t]\big\}$ is a partition of $G\setminus N$}.
  \end{equation} 
  Also, by \eqref{C+x_i=C} it follows that
  $\delta_{i,\lambda_i} + \delta_{i,\lambda_i-1} + \ldots, \delta_{i,0} + c_{i,0} = 
  c_{i, \lambda_i} = c_{i,0} + x_i$.
  Since $x_i$ has odd order, it follows that $x_i \in H$. Considering that $G/H$ is abelian 
  (since it has order $2$), the following equality involving cosets of $N$ holds:  
  \begin{equation}
  \sum_{j=0}^{\lambda_i} \delta_{i,j} + H = x_i + H = H.
  \end{equation}
  This means that $\sum_{j=0}^{\lambda_i} \delta_{i,j}\in H$. In other words, 
  each $D_i$ contains an even number of elements belonging to 
  $G\setminus H$; hence, by \eqref{partialdifferences} it follows that 
  $|~(~G~\setminus~N)~\setminus~H| \equiv 0 \pmod{4}$.
  However, by \eqref{N/(H cap N)} it follows that $|(G\setminus N)\setminus H| = \lfloor m/2\rfloor n \equiv 2 \pmod{4}$ which is a contradiction.
\end{proof}

It follows that a regular $\ell$-cycle system of $\Kmn$ over a non-cyclic group $G$ and satisfying condition 2 of Corollary \ref{nonexistence corollary} must necessarily contain 
cycles with non-trivial $G$-stabilizers of even size. On the contrary, regular 
$\ell$-cycle systems of $\Kmn$ satisfying condition 1 of Corollary \ref{nonexistence corollary} do not exist, as shown below.

\begin{cor} Let $G$ be an arbitrary group of order $mn$. Then there is no $G$-regular $\ell$-cycle system of $K_m[n]$ whenever  $\ell$ is odd, $m\equiv 2,3 \pmod{4}$, and $n\equiv 2 \pmod{4}$.
\end{cor}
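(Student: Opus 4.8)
The plan is to obtain this corollary as an immediate consequence of Theorem~\ref{nonexistence G-regular}. That theorem rules out the configuration $m\equiv 2,3\pmod 4$, $n\equiv 2\pmod 4$ only under the extra hypothesis that every cycle has a $G$-stabilizer of odd order; so the whole task reduces to showing that, when $\ell$ is odd, this extra hypothesis is \emph{automatic}. Thus I would argue by contradiction: assume a $G$-regular $\ell$-cycle system $\cB$ of $K_m[n]$ exists, realized (as in the proof of Theorem~\ref{nonexistence G-regular}) inside $\Cay[G:G\setminus N]$ with $C+g\in\cB$ for all $C\in\cB$ and $g\in G$, and then establish that each cycle's stabilizer has odd order.

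The key step is the stabilizer analysis. For a cycle $C\in\cB$, write $S=\{g\in G\mid C+g=C\}$ for its $G$-stabilizer. The crucial observation is that $G$ acts on the vertex set by translation $v\mapsto v+g$, and this is the regular action of $G$ on itself, hence \emph{free}: $v+g=v$ forces $g=0$. I would exploit freeness twice. First, restricting the translation action to $V(C)$ gives a homomorphism from $S$ into the automorphism group of the $\ell$-cycle, namely the dihedral group $\mathbb{D}_{2\ell}$; freeness makes this homomorphism injective, so $S$ embeds in $\mathbb{D}_{2\ell}$. Second, and more importantly, freeness forces every nonidentity element of $S$ to act on $V(C)$ without fixed points. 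Since $\ell$ is odd, each of the $\ell$ reflections in $\mathbb{D}_{2\ell}$ fixes exactly one vertex of the cycle, so $S$ can contain no reflection. Hence the image of $S$ lies in the cyclic rotation subgroup of order $\ell$, and consequently $|S|$ divides $\ell$; in particular $|S|$ is odd.

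With that in hand, the conclusion is immediate: every cycle of $\cB$ has a $G$-stabilizer of odd order, so Theorem~\ref{nonexistence G-regular} applies and yields $m\not\equiv 2,3\pmod 4$ or $n\not\equiv 2\pmod 4$, contradicting the standing hypotheses. I do not expect a genuine obstacle here — the only piece of real content is the parity-of-stabilizer step, whose heart is the elementary fact that a reflection of an odd cycle always fixes a vertex, which together with freeness of the translation action excludes reflections from $S$. The remainder simply invokes the previous theorem verbatim.
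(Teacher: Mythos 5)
Your proposal is correct and follows essentially the same route as the paper: the paper likewise notes that an involution in a cycle's $G$-stabilizer would have to fix a vertex of the odd cycle (a reflection of an odd cycle has a fixed vertex), contradicting the freeness of the translation action, and then invokes Theorem~\ref{nonexistence G-regular}. Your version merely elaborates the same idea by showing $|S|$ divides $\ell$ rather than just excluding involutions.
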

\begin{proof} Since $\ell$ is odd, it is easy to note that the $G$-stabilizer of any cycle of a $G$-regular $\ell$-cycle system $\cB$ has odd size. Indeed, an involution of $G$ fixing an $\ell$-cycle of 
$\cB$ must fix one of its vertices contradicting the assumption that $G$ acts sharply transitively on the vertex set. Then the assertion follows from Theorem \ref{nonexistence G-regular}.
\end{proof}

\section*{Acknowledgements}

A.C.\ Burgess gratefully acknowledges support from an NSERC Discovery Grant.
F. Merola and T. Traetta gratefully acknowledge support from GNSAGA of Istituto Nazionale di Alta Matematica.

\end{document}